\title[Arbitrary Orientations of Hamilton Cycles in Digraphs]{Arbitrary Orientations of Hamilton Cycles in Digraphs}
\date{\today}
\author[L. DeBiasio, D. K\"uhn, T. Molla, D. Osthus and A. Taylor]{Louis DeBiasio, Daniela K\"uhn, Theodore Molla, Deryk Osthus and Amelia Taylor}
\thanks{The research leading to these results was partially supported by the Simons Foundation, Grant no. 283194 (Louis DeBiasio), as well as by the European Research Council
under the European Union's Seventh Framework Programme (FP/2007--2013) / ERC Grant
Agreements no. 258345 (D.~K\"uhn) and 306349 (D.~Osthus).}
\newtheorem{firstthm}{Proposition}[section]
\newtheorem{theorem}[firstthm]{Theorem}
\newtheorem{prop}[firstthm]{Proposition}
\newtheorem{lemma}[firstthm]{Lemma}
\newdimen\margin   
\def\textno#1&#2\par{%
   \margin=\hsize
   \advance\margin by -4\parindent
          \setbox1=\hbox{\sl#1}%
   \ifdim\wd1 < \margin
      $$\box1\eqno#2$$%
   \else
      \bigbreak
      \hbox to \hsize{\indent$\vcenter{\advance\hsize by -3\parindent
      \it\noindent#1}\hfil#2$}%
      \bigbreak
   \fi}
\begin{document}

\def\COMMENT#1{}
\def\TASK#1{}

\def\eps{{\varepsilon}}
\newcommand{\ex}{\mathbb{E}}
\newcommand{\pr}{\mathbb{P}}
\newcommand{\cA}{\mathcal{A}}
\newcommand{\cB}{\mathcal{B}}
\newcommand{\cS}{\mathcal{S}}
\newcommand{\cF}{\mathcal{F}}
\newcommand{\cC}{\mathcal{C}}
\newcommand{\cP}{\mathcal{P}}
\newcommand{\cQ}{\mathcal{Q}}
\newcommand{\cR}{\mathcal{R}}
\newcommand{\cK}{\mathcal{K}}
\newcommand{\cD}{\mathcal{D}}
\newcommand{\cI}{\mathcal{I}}
\newcommand{\cV}{\mathcal{V}}
\newcommand{\1}{{\bf 1}_{n\not\equiv \delta}}
\newcommand{\eul}{{\rm e}}

\begin{abstract}  \noindent
Let $n$ be sufficiently large and suppose that $G$ is a digraph on $n$ vertices where every vertex has in- and outdegree at least $n/2$. We show that $G$ contains every orientation of a Hamilton cycle except, possibly, the antidirected one. The antidirected case was settled by DeBiasio and Molla, where the threshold is $n/2+1$. Our result is best possible and improves on an approximate result by H\"aggkvist and Thomason.
\end{abstract}

\maketitle

\section{Introduction}\label{sec:intro}

A classical result on Hamilton cycles is Dirac's theorem \cite{dirac} which states that if $G$ is a graph on $n \geq 3$ vertices with minimum degree $\delta(G) \geq n/2$, then $G$ contains a Hamilton cycle. Ghouila-Houri \cite{ghouila} proved an analogue of Dirac's theorem for digraphs  which guarantees that any digraph of minimum semidegree at least $n/2$ contains a consistently oriented Hamilton cycle (where the minimum semidegree $\delta^0(G)$ of a digraph $G$ is the minimum of all the in- and outdegrees of the vertices in $G$). In \cite{keevko}, Keevash, K\"uhn and Osthus proved a version of this theorem for oriented graphs. Here the minimum semidegree threshold turns out to be $\delta^0(G) \geq (3n-4)/8$. (In a digraph we allow two edges of opposite orientations between a pair or vertices, in an oriented graph at most one edge is allowed between any pair of vertices.)

Instead of asking for consistently oriented Hamilton cycles in an oriented graph or digraph, it is natural to consider different orientations of a Hamilton cycle. For example, Thomason \cite{thom} showed that every sufficiently large strongly connected tournament contains every orientation of a Hamilton cycle. H\"aggkvist and Thomason \cite{hagthom2} proved an approximate version of Ghouila-Houri's theorem for arbitrary orientations of Hamilton cycles. They showed that a minimum semidegree of $n/2+n^{5/6}$ ensures the existence of an arbitrary orientation of a Hamilton cycle in a digraph. This improved a result of Grant \cite{grant} for antidirected Hamilton cycles. The exact threshold in the antidirected case was obtained by DeBiasio and Molla \cite{debmol}, here the threshold is $\delta^0(G) \geq n/2+1$, i.e., larger than in Ghouila-Houri's theorem. In Figure~\ref{fig:cai}, we give two digraphs $G$ on $2m$ vertices which satisfy $\delta^0(G) = m$ and have no antidirected Hamilton cycle, showing that this bound is best possible. (The first of these examples is already due to Cai \cite{cai}.)
\begin{figure}[h]
\centering
\includegraphics[scale=0.2]{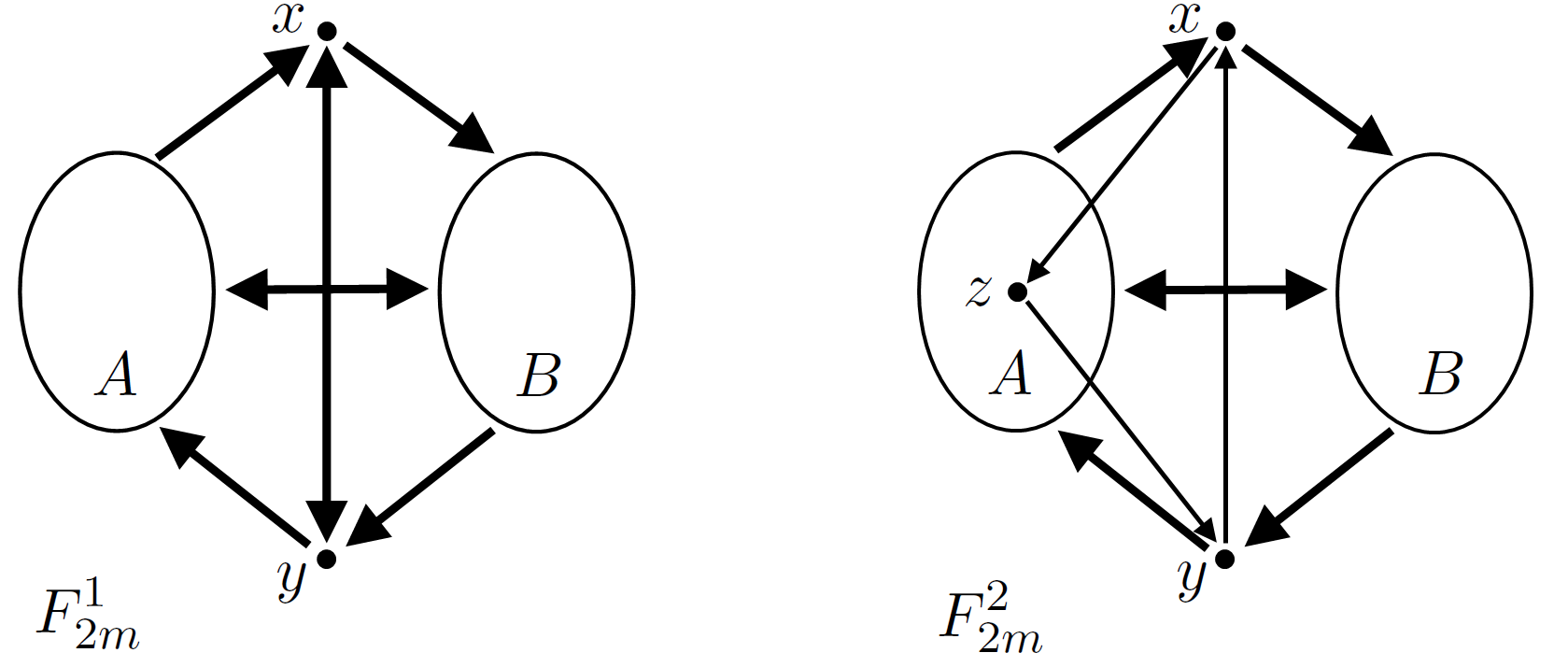}
\caption{In digraphs $F_{2m}^1$ and $F_{2m}^2$, $A$ and $B$ are independent sets of size $m-1$ and bold arrows indicate that all possible edges are present in the directions shown.}\label{fig:cai}
\end{figure}
\begin{theorem}[DeBiasio \& Molla, \cite{debmol}]\label{thm:antidirected}
There exists an integer $m_0$ such that the following hold for all $m\geq m_0$. Let $G$ be a digraph on $2m$ vertices. If $\delta^0(G) \geq m$, then $G$ contains an antidirected Hamilton cycle, unless $G$ is isomorphic to $F_{2m}^1$ or $F_{2m}^2$. In particular, if $\delta^0(G) \geq m+1$, then $G$ contains an antidirected Hamilton cycle.
\end{theorem}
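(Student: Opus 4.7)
The plan is to follow the by-now standard stability/absorbing framework for tight extremal thresholds on Hamilton-type structures. Fix constants $0 < \eta \ll \eps \ll 1$ and call $G$ \emph{extremal} if its vertex set admits a partition that is $\eps$-close to that of $F^1_{2m}$ or $F^2_{2m}$, and \emph{non-extremal} otherwise. These two cases are treated separately.

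Non-extremal case. Combine the absorbing method with the Diregularity Lemma. First, build an \emph{absorbing path} $P_{\text{abs}}$: a short antidirected path (of length $o(m)$) so that for every small leftover set $S\subseteq V(G)\setminus V(P_{\text{abs}})$ with the correct source/sink parity, there is an antidirected path on $V(P_{\text{abs}})\cup S$ with the same endpoints as $P_{\text{abs}}$. Such a $P_{\text{abs}}$ is built by showing that, under $\delta^0(G)\ge m$ and non-extremality, every vertex $v$ has linearly many short antidirected `absorber' paths into which it can be inserted at either a would-be-source position or a would-be-sink position; select a random collection of absorbers and concatenate them to form $P_{\text{abs}}$. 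Next, set aside a small random reservoir $R$ and cover $V(G)\setminus(V(P_{\text{abs}})\cup R)$ by a bounded number of long antidirected paths. For this, apply the Diregularity Lemma, note that the reduced digraph still has semidegree at least $(1/2-o(1))|R_{\text{red}}|$, and exploit non-extremality to find a near-spanning alternating cycle structure in $R_{\text{red}}$ that can be blown up to the desired paths. Finally, use $R$ to glue the paths and $P_{\text{abs}}$ into a single antidirected cycle missing only a small controlled set $S$, and finish by absorbing $S$ into $P_{\text{abs}}$.

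Extremal case. Suppose $G$ is $\eps$-close to $F^1_{2m}$ or $F^2_{2m}$; after a cleaning step that moves atypical vertices into the appropriate class, we may assume $V(G)=A\cup B$ with $|A|=|B|=m$, where $A$ and $B$ are almost-independent and the cross-edges mostly follow the directions dictated by $F^1_{2m}$ or $F^2_{2m}$. In any antidirected Hamilton cycle the $m$ sources and $m$ sinks form the two sides of a bipartition of $V(G)$; the reason the extremal graphs fail is that this bipartition is essentially forced to equal $A\cup B$, after which a parity/direction obstruction precludes closing up the cycle. Assuming $G\not\cong F^1_{2m},F^2_{2m}$, one locates at least one edge (or non-edge) that breaks the rigid extremal pattern, uses it as a `switching edge', and constructs the antidirected Hamilton cycle greedily, matching up sources to sinks across $A$ and $B$ via Hall-type arguments in the near-complete bipartite parts while using the switching edge to resolve the parity/direction mismatch.

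The principal technical obstacle is the design of the absorbing gadgets for antidirected cycles. Unlike consistently oriented cycles, inserting vertices into an antidirected path changes the source/sink type at the insertion point, so one needs absorbers of several types and must carefully balance the parities of `source' and `sink' leftover vertices. Achieving the sharp bound $\delta^0(G)\ge m$---as opposed to the $n/2+n^{5/6}$ of H\"aggkvist and Thomason---requires pushing the absorbing machinery to the point where a single local deviation from the extremal configuration is already enough to build the required gadgets, and this in turn interlocks delicately with the extremal case, where one must show that the only obstructions are precisely $F^1_{2m}$ and $F^2_{2m}$.
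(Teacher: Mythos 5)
The paper you are working from does not actually prove Theorem~\ref{thm:antidirected}: it is stated verbatim as a citation to DeBiasio and Molla \cite{debmol} and appears here only as background for Theorem~\ref{thm:main}. There is therefore no proof of this statement in the present source to compare your attempt against, and to check whether your route matches theirs you would need to consult \cite{debmol} directly.

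That said, two remarks on your proposal in light of what the present paper \emph{does} do. First, for the complementary Theorem~\ref{thm:main} the authors do not use absorption plus the diregularity lemma at all. Instead Lemma~\ref{lem:structure} shows that under $\delta^0(G)\ge n/2$ either $G$ is a robust $(\nu,\tau)$-outexpander (in which case Theorem~\ref{thm:robust}, due to Taylor following Kelly, finishes immediately) or $G$ is $\eps$-extremal, and the extremal case is further split into $ST$-, $AB$- and $ABST$-extremal configurations handled by explicit path constructions rather than a regularity blow-up. If you intend to prove Theorem~\ref{thm:antidirected} within the same framework as this paper, the natural move is to replace your entire absorbing/reservoir/regularity apparatus by this robust-outexpander black box, which is both shorter and exactly what makes the sharp constant $m$ (as opposed to $m+o(m)$) attainable. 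Second, your description of the extremal case is slightly off: in $F^1_{2m}$ and $F^2_{2m}$ the independent sets $A$ and $B$ have size $m-1$ (not $m$), and it is precisely the two leftover vertices and how they attach to $A$ and $B$ that distinguish the two extremal graphs. Your observation that the sources and sinks of an antidirected Hamilton cycle form a perfect bipartition of $V(G)$ is correct and is indeed the engine of the obstruction, but the cleaning step that forces this bipartition to agree with $A\cup B$ plus the two extra vertices, and the case analysis of which single deviating edge suffices to break the obstruction, are substantial and not supplied. Finally, the claim that under $\delta^0(G)\ge m$ and non-extremality ``every vertex has linearly many absorbers'' of each of the needed parities is exactly where a sharp threshold argument lives or dies; with semidegree only $m$, one does not get $\Theta(m^2)$ absorbers of a given type for free, and you would need a genuinely quantitative counting argument (or the robust-outexpander reduction) to make this step go through.
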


In this paper, we settle the problem by completely determining the exact threshold for arbitrary orientations. We show that a minimum semidegree of $n/2$ suffices if the Hamilton cycle is not antidirected. This bound is best possible by the extremal examples for Ghouila-Houri's theorem, i.e., if $n$ is even, the digraph consisting of two disjoint complete digraphs on $n/2$ vertices and, if $n$ is odd, the complete bipartite digraph with vertex classes of size $(n-1)/2$ and $(n+1)/2$.
\begin{theorem}\label{thm:main}
There exists an integer $n_0$ such that the following holds.  Let $G$ be a digraph on $n\geq n_0$ vertices with $\delta^0(G) \geq n/2$. If $C$ is any orientation of a cycle on $n$ vertices which is not antidirected, then $G$ contains a copy of $C$.
\end{theorem}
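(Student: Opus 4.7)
The plan is to split into a non-extremal and an extremal case. Fix constants $0 < \eps \ll \eta \ll 1$ and call $G$ \emph{$\eta$-extremal} if it is $\eta$-close to one of the two tight Ghouila-Houri configurations: either $V(G)=V_1\cup V_2$ with $|V_1|,|V_2|=(1\pm\eta)n/2$ and almost no edges between $V_1$ and $V_2$ (\emph{split type}), or with almost no edges inside $V_1$ and $V_2$ (\emph{bipartite type}). Throughout, the structural feature of $C$ to exploit is that, being non-antidirected, it contains a maximal consistently oriented block of length at least two; this block provides the orientation-specific rigidity that the consistent (Ghouila-Houri) and antidirected (Theorem~\ref{thm:antidirected}) settings do not share.

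In the non-extremal case I would run the absorbing method on top of the directed regularity lemma. First, construct a short \emph{absorbing path} $P_A$ in $G$ whose orientation realises a subpath of $C$, with the property that for every small $S\subseteq V(G)\setminus V(P_A)$ of size $|S|\leq\eps n$, the digraph $G[V(P_A)\cup S]$ contains a Hamilton path of the prescribed extended orientation. The length-two consistent block yields single-vertex insertion gadgets with the correct local pattern, and $\delta^0(G)\geq n/2$ ensures that each vertex admits $\Omega(n^k)$ such gadgets, so the standard random-choice construction goes through. After reserving $P_A$ and a small random reservoir $R$, I would apply the digraph regularity lemma to $V(G)\setminus(V(P_A)\cup R)$ to obtain a reduced digraph $R^*$ with $\delta^0(R^*)\geq(1/2-\eps)|R^*|$; Ghouila-Houri on $R^*$ then provides a consistent Hamilton cycle of clusters. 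The consistent blocks of $C$ would be embedded greedily along this cluster cycle via a digraph blow-up (or direct regular-pair) argument, with the block-boundary transitions routed through $R$. Finally, the leftover and reservoir vertices are absorbed into $P_A$ to close up into a copy of $C$.

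In the extremal case, since $\delta^0(G)\geq n/2$ strictly exceeds the semidegree of each tight example, $G$ must contain a small but positive number of \emph{atypical} edges with respect to the nearby extremal structure (diagonal edges for bipartite type, crossing edges for split type). I would construct $C$ directly, using typical edges for most of its length and inserting atypical edges exactly at the positions where the orientation pattern of $C$ deviates from what the extremal structure supplies for free; the non-antidirectedness of $C$ is precisely what makes such placements feasible, in contrast to Theorem~\ref{thm:antidirected}. The main obstacle I expect is the construction of the absorber in the non-extremal case: since the orientation sequence of $C$ is arbitrary, the gadgets must match a prescribed local pattern and simultaneously permit single-vertex insertions that preserve that pattern, and one must verify abundance of gadgets at every vertex as well as the inequality between the absorber's capacity and the uncovered leftover after the regularity-based embedding. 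A secondary task is to prove a clean stability statement showing that any $G$ defeating the regularity argument is indeed $\eta$-extremal, so that the extremal-case analysis takes over without a gap.
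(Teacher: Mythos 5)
Your top-level dichotomy is sound in spirit (the paper also splits into a ``non-expanding is extremal'' case and a structured extremal case, with the non-extremal side handled by a robust-expander embedding theorem cited from Taylor's thesis rather than built from scratch via absorption), but there is a genuine gap in how you characterise the extremal case, and it is exactly the point the paper's Proposition~\ref{prop:structure2} is built to address. You posit that the only near-tight structures are the two Ghouila--Houri configurations: ``split type'' ($V_1\cup V_2$ with almost no edges between) and ``bipartite type'' (almost no edges inside the parts). That is the correct list in the \emph{undirected} Dirac setting, but in the directed setting a graph that fails robust outexpansion can have a genuinely hybrid structure: a four-part partition $A,B,S,T$ with $G[S]$, $G[T]$ nearly complete digraphs, $G[A,B]$ nearly complete bipartite in both directions, and a consistent ``cyclic'' pattern $A\to S\to B\to T\to A$ between the parts, with all four parts of linear size. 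This $ABST$-extremal digraph has $\delta^0\geq n/2$, is not a robust outexpander (take $X=A\cup S$; its robust outneighbourhood is $B\cup S$, the same size), and is not close to either of your two types. So the stability statement you flag as ``a secondary task'' -- that any $G$ defeating the regularity argument is close to one of your two configurations -- is false, and without it the case split does not cover all digraphs. Handling the $ABST$-extremal case (Section~\ref{sec:ABST}) is in fact a substantial part of the paper and requires combining the $S/T$-linking machinery of the split case with the $A/B$-balancing machinery of the bipartite case.

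Two further remarks. On the non-extremal side, your absorption plan is plausible in outline but has an unresolved core: when $C$ is antidirected except for a single consistent run of length two, single-vertex insertion gadgets that preserve the antidirected local pattern do not exist (inserting one vertex flips the sink/source parity), so one must either insert pairs or route absorption through the lone consistent block; verifying abundance and closing up then needs care, and the paper side-steps this entirely by invoking Theorem~\ref{thm:robust}. On the extremal side, your sketch (``use typical edges for most of $C$ and insert atypical edges where the orientation deviates'') does not yet engage with the actual difficulties, which are (a) making the short exceptional-covering path leave a \emph{balanced} number of uncovered vertices in $A$ and $B$ so that Moon--Moser applies to the remainder, and (b) in the split/hybrid cases, cutting $C$ into two (or four) arcs and finding compatible crossing edges to link the pieces embedded in $G[S]$ and $G[T]$; both require different constructions according to whether $C$ has many or few sink vertices.
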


Kelly \cite{kelly} proved an approximate version of Theorem~\ref{thm:main} for oriented graphs. He showed that the semidegree threshold for an arbitrary orientation of a Hamilton cycle in an oriented graph is $3n/8 +o(n)$. It would be interesting to obtain an exact version of this result. Further related problems on digraph Hamilton cycles are discussed in \cite{kosurvey}.

\section{Proof sketch}\label{sec:sketch}

The proof of Theorem~\ref{thm:main} utilizes the notion of robust expansion which has been very useful in several settings recently. Roughly speaking, a digraph $G$ is a robust outexpander if every vertex set $S$ of reasonable size has an outneighbourhood which is at least a little larger than $S$ itself, even if we delete a small proportion of the edges of $G$. A formal definition of robust outexpansion is given in Section~\ref{sec:tools}. In Lemma~\ref{lem:structure}, we observe that any graph satisfying the conditions of Theorem~\ref{thm:main} must be a robust outexpander or have a large set which does not expand, in which case we say that $G$ is $\eps$-extremal.  Theorem~\ref{thm:main} was verified for the case when $G$ is a robust outexpander by Taylor \cite{msci} based on the approach of Kelly \cite{kelly}. This allows us to restrict our attention to the $\eps$-extremal case. We introduce three refinements of the notion of $\eps$-extremality: $ST$-extremal, $AB$-extremal and $ABST$-extremal. These are illustrated in Figure~\ref{fig:abstextremal}, the arrows indicate that $G$ is almost complete in the directions shown. In each of these cases, we have that $|A|\sim|B|$ and $|S|\sim|T|$. If $G$ is $ST$-extremal, then the sets $A$ and $B$ are almost empty and so $G$ is close to the digraph consisting of two disjoint complete digraphs on $n/2$ vertices. If $G$ is $AB$-extremal, then the sets $S$ and $T$ are almost empty and so in this case $G$ is close to the complete bipartite digraph with vertex classes of size $n/2$ (thus both digraphs in Figure~\ref{fig:cai} are $AB$-extremal). Within each of these cases, we further subdivide the proof depending on how many changes of direction the desired Hamilton cycle has. Note that in the directed setting the set of extremal structures is much less restricted than in the undirected setting (in the undirected case, it is well known that all the near extremal graphs are close to the complete bipartite graph $K_{n/2,n/2}$ or two disjoint cliques on $n/2$ vertices).

\begin{figure}[h]
\centering
\includegraphics[scale=0.25]{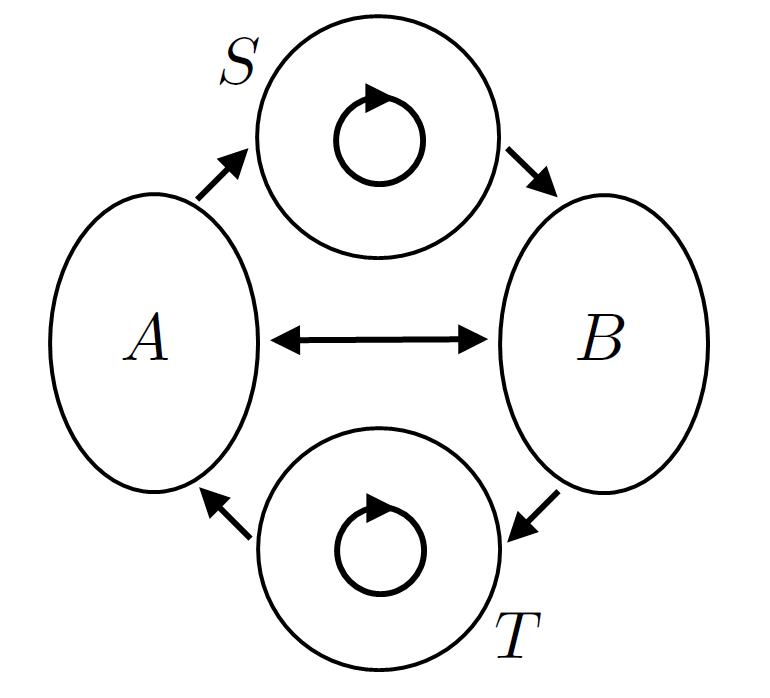}
\caption{An $ABST$-extremal graph. When $G$ is $AB$-extremal, the sets $S$ and $T$ are almost empty and when $G$ is $ST$-extremal the sets $A$ and $B$ are almost empty.}\label{fig:abstextremal}
\end{figure}

The main difficulty in each of the cases is covering the exceptional vertices, i.e., those vertices with low in- or outdegree in the vertex classes where we would expect most of their neighbours to lie. When $G$ is $AB$-extremal, we also consider the vertices in $S\cup T$ to be exceptional and, when $G$ is $ST$-extremal, we consider the vertices in $A\cup B$ to be exceptional. In each case we find a short path $P$ in $G$ which covers all of these exceptional vertices. When the cycle $C$ is close to being consistently oriented, we cover these exceptional vertices by short consistently oriented paths and when $C$ has many changes of direction, we will map sink or source vertices in $C$ to these exceptional vertices (here a sink vertex is a vertex of indegree two and a source vertex is a vertex of outdegree two).

An additional difficulty is that in the $AB$- and $ABST$-extremal cases we must ensure that the path $P$ leaves a balanced number of vertices in $A$ and $B$ uncovered. Once we have found $P$ in $G$, the remaining vertices of $G$ (i.e., those not covered by $P$) induce a balanced almost complete bipartite digraph and one can easily embed the remainder of $C$ using a bipartite version of Dirac's theorem. When $G$ is $ST$-extremal, our aim will be to split the cycle $C$ into two paths $P_S$ and $P_T$ and embed $P_S$ into the digraph $G[S]$ and $P_T$ into $G[T]$. So a further complication in this case is that we need to link together $P_S$ and $P_T$ as well as covering all vertices in $A\cup B$. 

This paper is organised as follows. Sections~\ref{sec:notation} and \ref{sec:tools} introduce the notation and tools which will be used throughout this paper.  In Section~\ref{sec:structure} we describe the structure of an $\eps$-extremal digraph and formally define what it means to be $ST$-, $AB$- or $ABST$-extremal. The remaining sections prove Theorem~\ref{thm:main} in each of these three cases: we consider the $ST$-extremal case in Section~\ref{sec:ST}, the $AB$-extremal case in Section~\ref{sec:AB} and the $ABST$-extremal case in Section~\ref{sec:ABST}.

\section{Notation}\label{sec:notation}

Let $G$ be a digraph on $n$ vertices. We will write $xy \in E(G)$ to indicate that $G$ contains an edge oriented from $x$ to $y$. If $G$ is a digraph and $x\in V(G)$, we will write $N^+_G(x)$ for the \emph{outneighbourhood} of $x$ and $N^-_G(x)$ for the \emph{inneighbourhood} of $x$. We define $d^+_G(x):=|N^+_G(x)|$ and $d^-_G(x):=|N^-_G(x)|$. We will write, for example, $d^\pm_G(x)\geq a$ to mean $d^+_G(x), d^-_G(x) \geq a$. We sometimes omit the subscript $G$ if this is unambiguous. We let $\delta^0(G):=\min\{d^+(x), d^-(x):x\in V(G)\}$. If $A\subseteq V(G)$, we let $d^+_A(x):=|N^+_G(x)\cap A|$ and define $d^-_A(x)$ and $d^\pm_A(x)$ similarly. We say that $x\in V(G)$ is a \emph{sink vertex} if $d^+(x)=0$ and a \emph{source vertex} if $d^-(x)=0$.

Let $A, B \subseteq V(G)$ and $xy\in E(G)$. If $x\in A$ and $y\in B$ we say that $xy$ is an \emph{$AB$-edge}. We write $E(A,B)$ for the set of all $AB$-edges and we write $E(A)$ for $E(A,A)$. We let $e(A, B):=|E(A,B)|$ and $e(A):=|E(A)|$. We write $G[A,B]$ for the digraph with vertex set $A\cup B$ and edge set $E(A,B)\cup E(B,A)$ and we write $G[A]$ for the digraph with vertex set $A$ and edge set $E(A)$. We say that a path $P=x_1x_2\dots x_q$ is an \emph{$AB$-path} if $x_1\in A$ and $x_q\in B$. If $x_1,x_q\in A$, we say that $P$ is an \emph{$A$-path}. If $A \subseteq V(P)$, we say that $P$ \emph{covers} $A$. If $\cP$ is a collection of paths, we write $V(\cP)$ for $\bigcup_{P\in\cP}V(P)$.

Let $P=x_1x_2\dots x_q$ be a path. The \emph{length} of $P$ is the number of its edges. Given sets $X_1, \dots, X_q\subseteq V(G)$, we say that $P$ has \emph{form} $X_1X_2\dots X_q$  if $x_i\in X_i$ for $i=1,2, \dots, q$. We will use the following abbreviation
$$(X)^k:=\underbrace{XX\dots X}_{k\text{ times}}.$$
We will say that $P$ is a \emph{forward} path of the form $X_1X_2\dots X_q$ if $P$ has form $X_1X_2\dots X_q$ and $x_ix_{i+1}\in E(P)$ for all $i=1,2,\dots, q-1$. Similarly, $P$ is a \emph{backward} path of the form $X_1X_2\dots X_q$ if $P$ has form $X_1X_2\dots X_q$ and $x_{i+1}x_{i}\in E(P)$ for all $i=1,2,\dots, q-1$.

A digraph $G$ is \emph{oriented} if it is an orientation of a simple graph (i.e., if there are no $x,y\in V(G)$ such that $xy, yx\in E(G)$). Suppose that $C=(u_1u_2\dots u_n)$ is an oriented cycle. We let $\sigma(C)$ denote the number of sink vertices in $C$. We will write $(u_iu_{i+1} \dots u_j)$ or $(u_iCu_j)$ to denote the subpath of $C$ from $u_i$ to $u_j$. In particular, $(u_iu_{i+1})$ may represent the edge $u_iu_{i+1}$ or $u_{i+1}u_i$. Given edges $e=(u_i,u_{i+1})$ and $f=(u_j,u_{j+1})$, we write $(eCf)$ for the path $(u_iCu_{j+1})$. We say that an edge $(u_iu_{i+1})$ is a \emph{forward edge} if $(u_iu_{i+1})=u_iu_{i+1}$ and a \emph{backward edge} if $(u_iu_{i+1})=u_{i+1}u_i$. We say that a cycle is \emph{consistently oriented} if all of its edges are oriented in the same direction (forward or backward). We define a consistently oriented subpath $P$ of $C$ in the same way. We say that $P$ is \emph{forward} if it consists of only forward edges and \emph{backward} if it consists of only backward edges. A collection of subpaths of $C$ is \emph{consistent} if they are all forward paths or if they are all backward paths. We say that a path or cycle is \emph{antidirected} if it contains no consistently oriented subpath of length two.

Given $C$ as above, we define $d_C(u_i,u_j)$ to be the length of the path $(u_iCu_j)$ (so, for example, $d_C(u_1, u_n)=n-1$ and $d_C(u_n, u_1)=1$).
For a subpath $P=(u_iu_{i+1}\dots u_k)$ of $C$, we call $u_i$ the \emph{initial} vertex of $P$ and $u_k$ the \emph{final vertex}. We write $(u_jP):=(u_ju_{j+1}\dots u_k)$ and $(Pu_j):=(u_iu_{i+1}\dots u_j)$. If $P_1$ and $P_2$ are subpaths of $C$, we define $d_C(P_1,P_2):=d_C(v_1,v_2)$, where $v_i$ is the initial vertex $P_i$. In particular, we will use this definition when one or both of $P_1,P_2$ are edges. Suppose $P_1, P_2, \dots, P_k$ are internally disjoint subpaths of $C$ such that the final vertex of $P_i$ is the initial vertex of $P_{i+1}$ for $i=1, \dots, k-1$. Let $x$ denote the initial vertex of $P_1$ and $y$ denote the final vertex of $P_k$. If $x\neq y$, we write $(P_1P_2\dots P_k)$ for the subpath of $C$ from $x$ to $y$. If $x=y$, we sometimes write $C=(P_1P_2\dots P_k)$.

We will also make use of the following notation: $a \ll b$. This means that we can find an increasing function $f$ for which all of the conditions in the proof are satisfied whenever $a \leq f(b)$. It is equivalent to setting $a := \min \{f_1(b), f_2(b), \dots, f_k(b)\}$, where each $f_i(b)$ corresponds to the maximum value of $a$ allowed in order that the corresponding argument in the proof holds. However, in order to simplify the presentation, we will not determine these functions explicitly.

\section{Tools}\label{sec:tools}

\subsection{Hamilton cycles in dense graphs and digraphs}

We will use the following standard results concerning Hamilton paths and cycles. Theorem~\ref{thm:moon} is a bipartite version of Dirac's theorem. Proposition~\ref{prop:completepath} is a simple consequence of Dirac's theorem and this bipartite version.

\begin{theorem}[Moon \& Moser, \cite{moon}]\label{thm:moon}
Let $G=(A,B)$ be a bipartite graph with $|A|=|B|=n$. If $\delta(G) \geq n/2+1$, then $G$ contains a Hamilton cycle.
\end{theorem}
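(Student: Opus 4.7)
The plan is to establish the theorem via the bipartite analogue of the Bondy--Chv\'atal closure technique. The core technical claim I would aim for is the following closure lemma: \emph{if $G=(A,B)$ is a balanced bipartite graph with $|A|=|B|=n$, and $a\in A$, $b\in B$ are nonadjacent vertices with $d_G(a)+d_G(b)\ge n+1$, then $G$ is Hamiltonian if and only if $G+ab$ is Hamiltonian.} Granting this lemma, the theorem follows immediately: the hypothesis $\delta(G)\ge n/2+1$ forces $d_G(a)+d_G(b)\ge n+2$ for every nonadjacent pair $(a,b)\in A\times B$, so iteratively applying the closure lemma transforms $G$ into the complete bipartite graph $K_{n,n}$ without altering whether it is Hamiltonian, and $K_{n,n}$ is trivially Hamiltonian.

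To prove the closure lemma, the forward direction is vacuous, so suppose $G+ab$ admits a Hamilton cycle $C$. If $ab\notin E(C)$ then we are already done, so assume $ab\in E(C)$; then $C-ab$ is a Hamilton $ab$-path in $G$, which I label $a=u_1u_2\ldots u_{2n}=b$. The task is to exhibit a Hamilton cycle in $G$ itself that avoids the edge $ab$, and for this I would apply the classical rotation-index argument.

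Define $I:=\{\,i : au_{i+1}\in E(G)\,\}$ and $J:=\{\,i : u_ib\in E(G)\,\}$. Since $G$ is bipartite and $a\in A$ we have $N_G(a)\subseteq B$, so $u_{i+1}\in B$ forces $i$ to be odd for every $i\in I$; analogously each index in $J$ is odd. Hence $I$ and $J$ are both contained in the $n$-element set $\{1,3,5,\ldots,2n-1\}$. Since
\[
|I|+|J| \;=\; d_G(a)+d_G(b) \;\ge\; n+1 \;>\; n,
\]
there exists $i\in I\cap J$, and then $a\,u_{i+1}\,u_{i+2}\,\cdots\,u_{2n}\,u_i\,u_{i-1}\,\cdots\,u_1=a$ is a Hamilton cycle of $G$ not using $ab$, as required.

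The only potentially delicate step is the parity bookkeeping that places $I$ and $J$ inside a common $n$-element universe of odd indices; once that is set up correctly the pigeonhole conclusion $I\cap J\neq\emptyset$ is immediate. Since the whole argument is a short index count after the closure reduction, I do not expect any serious obstacle beyond this bookkeeping.
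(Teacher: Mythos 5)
The paper does not prove this theorem; it is invoked as a black box, cited to Moon and Moser (1963). So there is no paper argument to compare against. That said, your proof is correct and self-contained. The bipartite closure lemma and its proof by the rotation argument are sound: since $a=u_1\in A$ and the Hamilton $ab$-path alternates between parts, every index in $I$ and every index in $J$ is odd, so both live in the $n$-element set $\{1,3,\dots,2n-1\}$, and $|I|+|J|=d_G(a)+d_G(b)\ge n+1$ forces an overlap. One small point worth making explicit, which you implicitly rely on, is that the resulting cycle genuinely avoids $ab$: since $ab\notin E(G)$ we have $2n-1\notin I$ (as $u_{2n}=b\notin N_G(a)$) and $1\notin J$ (as $u_1=a\notin N_G(b)$), so any $i\in I\cap J$ satisfies $3\le i\le 2n-3$, and neither of the two new chords $u_1u_{i+1}$, $u_{2n}u_i$ can coincide with $u_1u_{2n}$. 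The closure reduction is also sound: $\delta(G)\ge n/2+1$ gives $d(a)+d(b)\ge n+2$ for every nonadjacent cross pair, degrees only increase as edges are added so the hypothesis persists, the process terminates at $K_{n,n}$, which is Hamiltonian for $n\ge 2$, and the case $n=1$ is vacuous.
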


\begin{prop}\label{prop:completepath}
\begin{enumerate}[\rm(i)]
\item Let $G$ be a digraph on $n$ vertices with $\delta^0(G)\geq 7n/8$. Let $x,y\in V(G)$ be distinct. Then $G$ contains a Hamilton path of any orientation between $x$ and $y$.
\item Let $m \geq 10$ and $G=(A,B)$ be a bipartite digraph with $|A|=m+1$ and $|B|=m$. Suppose that $\delta^0(G)\geq (7m+2)/8$. Let $x,y \in A$. Then $G$ contains a Hamilton path of any orientation between $x$ and $y$.
\end{enumerate}
\end{prop}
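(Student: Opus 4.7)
The plan is to reduce both parts to undirected Hamilton-connectedness by passing to the bidirected underlying graph. Given the digraph $G$, let $H$ denote the undirected graph on $V(G)$ whose edges are the unordered pairs $\{u,v\}$ with both $uv,vu\in E(G)$. Any Hamilton $xy$-path in $H$ can then be realised inside $G$ with any prescribed orientation, since each edge of $H$ may be traversed in either direction inside $G$. A standard inclusion--exclusion estimate $|N^+(u)\cap N^-(u)|\ge d^+(u)+d^-(u)-|N^+(u)\cup N^-(u)|$ combined with the semidegree hypothesis gives $\delta(H)\ge 3n/4$ in part~(i), and, applied on each side of the bipartition, $\delta(H)\ge (3m-2)/4$ in part~(ii); thus $H$ is ``almost complete''.

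For part~(i), since $\delta(H)\ge 3n/4\ge (n+1)/2$, the standard Hamilton-connectedness strengthening of Dirac's theorem due to Ore produces a Hamilton $xy$-path in $H$, and orienting its edges to match the prescribed orientation of the path finishes the proof.

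For part~(ii), choose any $b\in N_H(x)\cap N_H(y)\subseteq B$; this intersection is nonempty because $|N_H(x)\cap N_H(y)|\ge 2\cdot(3m+2)/4-m=(m+2)/2\ge 1$. Let $H':=H-x$; this is a balanced bipartite graph on $m+m$ vertices and one checks that $\delta(H')\ge m/2+1$ whenever $m\ge 10$. A bipartite Hamilton-connected analogue of Theorem~\ref{thm:moon} (provable by a routine Bondy--Chv\'atal-style closure) then yields a Hamilton $by$-path in $H'$; prepending the edge $xb$ gives the required $xy$-path in $H$, which is again orientable arbitrarily.

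The only point of real substance is invoking the correct Hamilton-connected versions of Dirac's and Moon--Moser's theorems. Because the degree conditions on $H$ sit comfortably above the thresholds those theorems need, these could alternatively be proved inline by a short closure-type argument, so I anticipate no substantive obstacle in either case.
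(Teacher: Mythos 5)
Your reduction to the bidirected underlying graph $H$ is the same first move as the paper. For part~(i) the arguments are near-identical: the paper contracts $x$ and $y$ to a single vertex and applies Dirac's theorem to the contracted graph, while you cite the Hamilton-connectedness strengthening of Dirac, whose standard proof \emph{is} exactly that contraction, so this is a reformulation rather than a different method. For part~(ii), however, your route genuinely diverges. The paper exploits the key feature that $x,y$ lie on the \emph{same} side $A$ of the bipartition: contracting them to one vertex with neighbourhood $N_{H}(x)\cap N_{H}(y)$ leaves a \emph{balanced} bipartite graph on $m+m$ vertices, so Theorem~\ref{thm:moon} alone produces the Hamilton cycle, and no Hamilton-laceability statement is ever needed. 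You instead delete $x$, pick a neighbour $b\in B$ of $x$, and invoke a bipartite ``Hamilton-laceability'' analogue of Moon--Moser to build a Hamilton $by$-path in $H'=H-x$. That analogue does hold at the threshold $\delta(H')\geq m/2+1$ you establish, and it can indeed be deduced by a closure argument (attach a path $b\,w\,w'\,y$ through two dummy vertices to obtain a balanced bipartite graph on $(m+1)+(m+1)$ vertices, close up $A\times B$ first, then close the dummies, and apply Theorem~\ref{thm:moon}); but this is strictly more machinery than the paper's contraction, and you would need to supply that intermediate lemma in full rather than leaving it as a remark. Two small notes: your degree estimates, including the constraint $m\geq 10$ coming from $(3m-6)/4\geq m/2+1$, are correct; and you never actually use that $b\in N_H(y)$ -- any $b\in N_H(x)$ works for your argument, so the common-neighbourhood computation is superfluous in your version (though it is exactly what the paper's contraction needs).
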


\begin{proof}
To prove (i), we define an undirected graph $G'$ on the vertex set $V(G)$ where $uv \in E(G')$ if and only if $uv, vu \in E(G)$. Let $G''$ be the graph obtained from $G'$ by contracting the vertices $x$ and $y$ to a single vertex $x'$ with $N_{G''}(x'):=N_{G'}(x) \cap N_{G'}(y)$. Note that%
\COMMENT{$d_{G'}(v)\geq 7n/8-(n/8-1)=3n/4+1$ for all $v\in V(G)$. So for $v\neq x,y$, we have $d_{G''}(x')\geq 3n/4\geq (n-1)/2$. Also, $d_{G''}(x')\geq$ (no. neighbours of $x$ in $G'$)$-$(no. non-neighbours of $y$ in $G'$)$-1$, where $-1$ is in case $y$ is a neighbour of $x$. So $d_{G''}(x')\geq (3n/4+1)-(n/4-2)-1=n/2+2\geq (n-1)/2$.} 
$$\delta(G'') \geq (n-1)/2= |G''|/2.$$
Hence $G''$ has a Hamilton cycle by Dirac's theorem. This corresponds to a Hamilton path of any orientation between $x$ and $y$ in $G$.

For (ii), we proceed in the same way, using Theorem~\ref{thm:moon} instead of Dirac's theorem.%
\COMMENT{We define an undirected bipartite graph $G'$ with vertex classes $A$ and $B$ where for all $u \in A, v\in B$, $uv \in E(G')$ if and only if $uv, vu \in E(G)$. Obtain the graph $G''$ by contracting the vertices $x$ and $y$ to a single vertex $x'$ with $N_{G''}(x'):=N_{G'}(x)\cap N_{G'}(y)$. Note that the resulting graph has vertex classes $A',B'$ of equal size, $m$. For any $v\in A$, we have $d_{G'}(v) \geq 2(7m+2)/8-m =(3m+2)/4$. So for $v\in A$ with $v \neq x,y$, we have $d_{G''}(v) \geq (3m+2)/4$. $d_{G''}(x') \geq 2(3m+2)/4-m = m/2+1$. For any $v\in B$ we have $d_{G'}(v) \geq 2(7m+2)/8-(m+1) =(3m-2)/4$ and so $d_{G''}(v) \geq (3m-2)/4-1 =(3m-6)/4$ (where $-1$ accounts for contraction of $x,y$). Note that $(3m-6)/4\geq m/2+1$ for all $m \geq 10$. So $G''$ satisfies $\delta(G'') \geq m/2+1.$
Then, by Theorem~\ref{thm:moon}, $G''$ has a Hamilton cycle. This Hamilton cycle uses only edges that are present in both directions in $G$. This allows us to find a Hamilton path of any orientation between $x$ and $y$ in $G$.}
\end{proof}

\subsection{Robust expanders}\label{subsec:robexp}

Let $0< \nu \leq \tau <1$, let $G$ be a digraph on $n$ vertices and let $S \subseteq V(G)$. The \emph{$\nu$-robust outneighbourhood} $RN_{\nu,G}^+(S)$ of $S$ is the set of all those vertices $x \in V(G)$ which have at least $\nu n$ inneighbours in $S$. $G$ is called a \emph{robust $(\nu,\tau)$-outexpander} if $|RN_{\nu,G}^+(S)| \geq |S|+\nu n$ for all $S \subseteq V(G)$ with $\tau n < |S| < (1- \tau)n$.

Recall from Section~\ref{sec:intro} that Kelly \cite{kelly} showed that any sufficiently large oriented graph with minimum semidegree at least $(3/8+\alpha)n$ contains any orientation of a Hamilton cycle. It is not hard to show that any such oriented graph is a robust outexpander (see \cite{kellyexact78}). In fact, in \cite{kelly}, Kelly observed that his arguments carry over to robustly expanding digraphs of linear degree. Taylor \cite{msci} has verified that this is indeed the case, proving the following result.
\begin{theorem}[\cite{msci}]\label{thm:robust}
Suppose $1/n \ll \nu \leq \tau \ll \eta <1$. Let $G$ be a digraph on $n$ vertices with $\delta^0(G) \geq \eta n$ and suppose $G$ is a robust $(\nu, \tau)$-outexpander. If $C$ is any orientation of a cycle on $n$ vertices, then $G$ contains a copy of $C$.
\end{theorem}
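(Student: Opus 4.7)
The plan is to follow the argument of Kelly \cite{kelly} for oriented graphs of minimum semidegree $(3/8+\alpha)n$, adapting the few steps where the oriented property is used. As Kelly himself observed, his proof only uses the density assumption through the consequence that $G$ is a robust outexpander of linear semidegree, so this adaptation is largely bookkeeping.

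The first step is to apply the Diregularity Lemma to $G$ with a small regularity parameter $\eps \ll \nu$, obtaining a partition $V_0, V_1, \dots, V_k$ of $V(G)$ with exceptional set $|V_0| \leq \eps n$, equal-sized clusters $V_1, \dots, V_k$, and a reduced digraph $R$ whose edges $V_iV_j$ correspond to $\eps$-regular pairs of density at least some fixed $d \ll \nu$. Standard deletion arguments give $\delta^0(R) \geq (\eta - 2\eps - d)k$, and, crucially, robust outexpansion is inherited: $R$ is a robust $(\nu', \tau')$-outexpander with $\nu' \approx \nu/2$ and $\tau' \approx 2\tau$.

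Next I would decompose the target cycle $C$ into its maximal consistently oriented subpaths (``runs''), delimited by sinks and sources, and build inside $R$ a spanning \emph{framework} along which these runs can be laid out: each run of $C$ is to be embedded into an $\eps$-regular pair corresponding to a (forward or backward) edge of $R$, and consecutive runs meet at the cluster containing the sink or source separating them. Because $R$ is a robust outexpander of linear minimum semidegree, the existence of such a framework, together with the freedom to balance cluster loads, follows from the Hamilton-framework arguments in \cite{kelly, kellyexact78}. In parallel I would reserve a short \emph{absorbing path} $P_{\rm abs}$ in $G$: for each vertex $v$, robust expansion supplies many short oriented ``absorbers'' for $v$, and a random selection can be chained together, again via robust expansion, into a single path $P_{\rm abs}$ with the property that for any small leftover set $U$ of the correct composition, $G[V(P_{\rm abs}) \cup U]$ contains a spanning path of the required orientation between the two endpoints of $P_{\rm abs}$.

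After setting $P_{\rm abs}$ aside, I would redistribute $V_0$ (and any vertices created by rebalancing) into suitable clusters, using $\delta^0(G) \geq \eta n$ to guarantee enough typical in- and outneighbours. The runs of $C$ are then embedded into the corresponding regular pairs via the digraph Blow-up Lemma, joined at the clusters housing sinks and sources, and finally $P_{\rm abs}$ swallows the leftover vertices to close the cycle. The main obstacle throughout is orientation-aware bookkeeping: each sink (respectively source) of $C$ must be mapped to a cluster that is the common head (respectively tail) of two framework edges, and the counts of vertices of $C$ assigned to each cluster by the runs must match the cluster sizes. Arranging this balance while preserving robust expansion and regularity, particularly when $C$ has many changes of direction, is the delicate technical core of the argument carried out in \cite{msci}; once it is in place the embedding reduces to standard applications of the Blow-up Lemma.
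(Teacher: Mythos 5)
The paper does not give a proof of this theorem: it is stated as a citation to Taylor's thesis \cite{msci}, with only the remark that Kelly \cite{kelly} observed that his regularity-based arguments for oriented graphs of semidegree $(3/8+\alpha)n$ carry over to robustly expanding digraphs of linear semidegree. Your sketch---Diregularity Lemma, inherited robust outexpansion in the reduced digraph $R$, decomposition of $C$ into maximal consistently oriented runs, a framework in $R$ along which runs are laid out with sinks and sources at cluster boundaries, incorporation of exceptional vertices, and the Blow-up Lemma---is indeed the route Kelly and Taylor take, so there is nothing in the paper to compare it against beyond this attribution. One small point: in Kelly's and the related Keevash--K\"uhn--Osthus arguments the exceptional set $V_0$ is usually dealt with by ``shifted walks'' inside the reduced digraph rather than a pre-built R\"odl--Ruci\'nski--Szemer\'edi-style absorbing path of the kind you describe; both devices play the same role, but if you want your outline to match \cite{msci} closely you should replace the absorbing-path step with a shifted-walk step.
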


\subsection{Structure}\label{sec:structure}

Let $\eps >0$ and $G$ be a digraph on $n$ vertices. We say that $G$ is \emph{$\eps$-extremal} if there is a partition $A,B,S,T$ of its vertices into sets of sizes $a,b,s,t$ such that $|a-b|,|s-t|\leq 1$ and $e(A\cup S, A\cup T) < \eps n^2$.

The following lemma describes the structure of a graph which satisfies the conditions of Theorem~\ref{thm:main}.

\begin{lemma}\label{lem:structure}
Suppose
$0<1/n \ll \nu \ll \tau, \eps < 1$
and let $G$ be a digraph on $n$ vertices with
$ \delta^0(G) \geq n/2.$
Then $G$ satisfies one of the following:
\begin{enumerate}[\rm(i)]
	\item $G$ is $\eps$-extremal;
	\item $G$ is a robust $(\nu, \tau)$-outexpander.
\end{enumerate}
\end{lemma}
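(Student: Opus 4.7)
Assume $G$ fails to be a robust $(\nu,\tau)$-outexpander; it suffices to derive (i). Then there exists $X \subseteq V(G)$ with $\tau n < |X| < (1-\tau)n$ and $|Y| < |X| + \nu n$, where $Y := RN_{\nu,G}^+(X)$. By definition, every $v \notin Y$ satisfies $d^-_X(v) < \nu n$, hence
\[e(X, V\setminus Y) < \nu n \cdot |V\setminus Y| \leq \nu n^2.\]
My plan is to start from the initial partition
\[A_0 := X \setminus Y, \quad S_0 := X \cap Y, \quad B_0 := Y \setminus X, \quad T_0 := V \setminus (X \cup Y),\]
noting that $A_0 \cup S_0 = X$ and $A_0 \cup T_0 = V \setminus Y$ already give $e(A_0 \cup S_0, A_0 \cup T_0) < \nu n^2$; only the size balance $|a-b|, |s-t| \leq 1$ will require work.

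The key step is to show that $|X|, |Y| = n/2 \pm O(\nu n/\tau)$. For the upper bound on $|X|$, each $v \in V \setminus Y$ has $d^-_X(v) < \nu n$ together with $d^-(v) \geq n/2$, hence $d^-_{V\setminus X}(v) > n/2 - \nu n$; summing over $v \in V \setminus Y$ gives
\[|V \setminus X| \cdot |V \setminus Y| \geq e(V \setminus X, V \setminus Y) > (n/2 - \nu n)|V \setminus Y|,\]
and dividing by $|V \setminus Y| > \tau n - \nu n > 0$ yields $|X| < n/2 + \nu n$. For the lower bound on $|X|$, the minimum outdegree hypothesis gives $e(X, V) \geq |X| n/2$, so $e(X, Y) \geq |X| n/2 - \nu n^2$; combined with $e(X, Y) \leq |X| \cdot |Y|$ and $|X| > \tau n$, this forces $|Y| \geq n/2 - \nu n/\tau$, and then $|Y| < |X| + \nu n$ gives $|X| > n/2 - 2\nu n/\tau$. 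Analogous bounds for $|Y|$ are then automatic.

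Consequently the initial imbalances $|A_0| - |B_0| = |X| - |Y|$ and $|S_0| - |T_0| = |X| + |Y| - n$ each have absolute value $O(\nu n/\tau)$. To finish, I would rebalance using only swaps of the form $A \leftrightarrow B$ and $S \leftrightarrow T$: these two types of move are independent (each affects exactly one of the two imbalances) and always feasible (we move vertices from the larger of the two cells). A single vertex reassignment changes $e(A\cup S, A\cup T)$ by at most $2n$, so the $O(\nu n/\tau)$ moves needed to drive both imbalances down to at most $1$ incur at most $O(\nu n^2/\tau)$ extra edges. Combined with the initial $\nu n^2$ and the hierarchy $\nu \ll \tau, \eps$ (which in particular allows $\nu/\tau \ll \eps$), we obtain $e(A\cup S, A\cup T) < \eps n^2$, establishing $\eps$-extremality.

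The main obstacle is the symmetric near-$n/2$ size bound on $|X|$; the upper bound in particular requires exploiting the minimum indegree condition on vertices \emph{outside} $Y$ (rather than inside $X$) together with the trivial pair-count bound on $e(V\setminus X, V\setminus Y)$. Once the sizes are pinned near $n/2$ the construction of the partition and the small rebalancing are routine.
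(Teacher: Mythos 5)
Your proof is correct and takes essentially the same approach as the paper: both derive that $|X|$ and $|RN^+_{\nu,G}(X)|$ must lie within $o(n)$ of $n/2$ using the semidegree hypothesis, form the partition $A_0=X\setminus Y$, $B_0=Y\setminus X$, $S_0=X\cap Y$, $T_0=\overline{X}\cap\overline{Y}$, and then rebalance. The paper organizes the size bounds as three cases (ruling out $|X|$ too small, too large, or in the middle range), whereas you fold the same counting estimates into a direct upper and lower bound on $|X|$; this is a cosmetic streamlining rather than a different idea, and both versions rely on the same observation that each rebalancing move changes $e(A\cup S,A\cup T)$ by at most $2n$.
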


\begin{proof}
Suppose that $G$ is not a robust $(\nu, \tau)$-outexpander. Then there is a set $X \subseteq V(G)$ with $\tau n \leq |X| \leq (1-\tau)n$ and $|RN_{\nu,G}^+(X)|<|X|+\nu n$. Define $RN^+ := RN_{\nu,G}^+(X)$. We consider the following cases:

\medskip

\noindent \textbf{Case 1: } \emph{$\tau n \leq |X| \leq (1/2- \sqrt \nu)n$.}

We have $$ |X|n/2 \leq e(X, RN^+) + e(X, \overline {RN^+}) \leq |X||RN^+| + \nu n^2 \leq |X|(|RN^+| + \nu n/\tau),$$
so $|RN^+| \geq (1/2-\nu/\tau)n \geq |X| +\nu n,$
which gives a contradiction.

\medskip

\noindent \textbf{Case 2: } \emph{$(1/2+\nu)n \leq |X| \leq (1-\tau)n$.}

For any $v \in V(G)$ we note that $d^-_X(v) \geq \nu n.$
Hence $|RN^+|=|G| \geq |X| +\nu n$, a contradiction.

\medskip

\noindent \textbf{Case 3: } \emph{$(1/2- \sqrt \nu)n < |X| < (1/2+ \nu)n$.}

Suppose that $|RN^+|<(1/2-3\nu) n$. Since $\delta^0(G) \geq n/2$, each vertex in $X$ has more than $3\nu n$ outneighbours in $\overline{RN^+}$. Thus, there is a vertex $v \not\in RN^+$ with more than $3\nu n |X|/n > \nu n$ inneighbours in $X$, which is a contradiction. Therefore,
\begin{equation}\label{eqn:rn+}
	(1/2-3\nu) n \leq |RN^+|< |X|+\nu n < (1/2+ 2\nu)n.
\end{equation}
Write $A_0:=X\setminus RN^+$, $B_0:=RN^+\setminus X$, $S_0:=X\cap RN^+$ and $T_0 := \overline X\cap \overline{RN^+}$. Let $a_0,b_0,s_0,t_0$, respectively, denote their sizes. Note that $|X|=a_0+s_0$, $|RN^+|=b_0+s_0$ and $a_0+b_0+s_0+t_0=n$. It follows from (\ref{eqn:rn+}) and the conditions of Case 3 that
$$(1/2-\sqrt\nu) n \leq a_0+s_0, b_0+t_0, b_0+s_0, a_0+t_0 \leq (1/2+\sqrt \nu) n$$
 and so $|a_0-b_0|, |s_0-t_0| \leq 2\sqrt\nu n$. Note that
$$e(A_0\cup S_0, A_0\cup T_0) = e(X,\overline{RN^+})<\nu n^2.$$ By moving at most $\sqrt\nu n$ vertices between the sets $A_0$ and $B_0$ and $\sqrt \nu n$ between the sets $S_0$ and $T_0$, we obtain new sets $A,B,S,T$ of sizes $a,b,s,t$ satisfying $|a-b|, |s-t| \leq 1$ and $e(A \cup S, A\cup T) \leq \eps n^2$. So $G$ is $\eps$-extremal.
\end{proof}

\subsection{Refining the notion of $\eps$-extremality}\label{subsec:refine}

Let $n\in \mathbb{N}$ and $\eps, \eps_1, \eps_2, \eps_3, \eps_4, \eta_1, \eta_2, \tau$ be positive constants satisfying
$$1/n \ll \eps \ll \eps_1 \ll \eps_2\ll \eta_1 \ll \tau \ll \eps_3 \ll \eps_4 \ll \eta_2 \ll 1.$$
We now introduce three refinements of $\eps$-extremality. (The constants $\eps_2$ and $\eps_4$ do not appear in these definitions but will be used at a later stage in the proof so we include them here for clarity.) Let $G$ be a digraph on $n$ vertices.

Firstly, we say that $G$ is \emph{$ST$-extremal} if there is a partition $A,B,S,T$ of $V(G)$ into sets of sizes $a,b,s,t$  such that:
\begin{enumerate}[(P1)]
\item $a\leq b$, $s\leq t$;\label{P*}
\item $\lfloor n/2 \rfloor -\eps_3 n \leq s,t \leq \lceil n/2 \rceil + \eps_3 n$;\label{P1}
\item $\delta^0(G[S]), \delta^0(G[T])  \geq \eta_2 n$;\label{P2}
\item $d^\pm_S(x) \geq n/2-\eps_3 n$ for all but at most $\eps_3 n$ vertices $x\in S$;\label{P3}
\item $d^\pm_T(x) \geq n/2-\eps_3 n$ for all but at most $\eps_3 n$ vertices $x\in T$;\label{P4}
\item $a+b \leq \eps_3 n$;\label{P5}
\item $d^-_T(x), d^+_S(x) > n/2-3\eta_2 n$ and $d^-_S(x), d^+_T(x)\leq 3\eta_2 n$ for all $x \in A$;\label{P6}
\item $d^-_S(x), d^+_T(x)> n/2-3\eta_2 n$ and $d^-_T(x), d^+_S(x) \leq 3\eta_2 n$ for all $x \in B$.\label{P7}
\end{enumerate}

Secondly, we say that $G$ is \emph{$AB$-extremal} if there is a partition $A,B,S,T$ of $V(G)$ into sets of sizes $a,b,s,t$  such that:
\begin{enumerate}[(Q1)]
\item $a\leq b$, $s\leq t$;\label{Q*}
\item $\lfloor n/2 \rfloor -\eps_3 n \leq a, b \leq \lceil n/2 \rceil + \eps_3 n$;\label{Q1}
\item $\delta^0(G[A,B])\geq n/50$;\label{Q2}
\item $d^\pm_B(x)\geq n/2-\eps_3 n$ for all but at most $\eps_3 n$ vertices $x\in A$;\label{Q3}
\item $d^\pm_A(x)\geq n/2-\eps_3 n$ for all but at most $\eps_3 n$ vertices $x\in B$;\label{Q4}
\item $s+t \leq \eps_3 n$;\label{Q5}
\item $d^-_A(x), d^+_B(x) \geq n/50$ for all $x \in S$;\label{Q6}
\item $d^-_B(x), d^+_A(x) \geq n/50$ for all $x \in T$;\label{Q7}
\item if $a<b$, $d^\pm_B(x) < n/20$ for all $x\in B$; $d^-_B(x)< n/20$ for all $x \in S$ and $d^+_B(x)< n/20$ for all $x \in T$.\label{Q8}%
\COMMENT{Only used in Prop.~\ref{prop:balance}. Cannot lose any conditions here because I need to be able to swap $S$ and $T$ to get $s\leq t$, if necessary.}
\end{enumerate}

Thirdly, we say that $G$ is \emph{$ABST$-extremal} if there is a partition $A,B,S,T$ of $V(G)$ into sets of sizes $a,b,s,t$  such that:
\begin{enumerate}[(R1)]
	\item $a\leq b$, $s\leq t$;\label{R*}
	\item $a,b,s,t \geq \tau n$;\label{R1}
	\item $|a-b|, |s-t| \leq \eps_1n$;\label{R2}
	\item $\delta^0(G[A,B])\geq \eta_1 n$;\label{R3}
	\item $d^+_{B\cup S}(x), d^-_{A\cup S}(x) \geq \eta_1 n$ for all $x \in S$;\label{R4}
	\item $d^+_{A\cup T}(x), d^-_{B\cup T}(x) \geq \eta_1 n$ for all $x \in T$;\label{R5}
	\item $d^\pm_B(x) \geq b-\eps^{1/3} n$ for all but at most $\eps_1n$ vertices $x\in A$;\label{R6}
	\item $d^\pm_A(x) \geq a-\eps^{1/3} n$ for all but at most $\eps_1n$ vertices $x\in B$;\label{R7}
	\item $d^+_{B\cup S}(x)\geq b+s-\eps^{1/3} n$ and $d^-_{A\cup S}(x) \geq a+s-\eps^{1/3} n$ for all but at most $\eps_1n$ vertices $x\in S$;\label{R8}
	\item $d^+_{A\cup T}(x)\geq a+t-\eps^{1/3} n$ and $d^-_{B\cup T}(x) \geq b+t-\eps^{1/3} n$ for all but at most $\eps_1n$ vertices $x\in T$.\label{R9}
\end{enumerate}

\begin{prop}\label{prop:structure2}
Suppose
$$1/n \ll \eps \ll \eps_1 \ll \eta_1 \ll \tau \ll \eps_3 \ll \eta_2 \ll 1$$
and $G$ is an $\eps$-extremal digraph on $n$ vertices with $\delta^0(G) \geq n/2$. Then there is a partition of $V(G)$ into sets $A,B,S,T$ of sizes $a,b,s,t$ satisfying one of the following:
\begin{itemize}
\item (P\ref{P1})--(P\ref{P7});
\item (Q\ref{Q1})--(Q\ref{Q8}) with $a\leq b$; 
\item (R\ref{R1})--(R\ref{R9}).
\end{itemize}
\end{prop}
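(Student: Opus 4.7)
The plan is to start with the partition $A_0,B_0,S_0,T_0$ guaranteed by $\eps$-extremality (so $|a_0-b_0|,|s_0-t_0|\le 1$ and $e(A_0\cup S_0,A_0\cup T_0)<\eps n^2$), clean it up by reassigning a sublinear number of misplaced vertices, and then apply a trichotomy on $a+b$ to decide which refinement $G$ satisfies. First I would extract degree information: since $|a_0-b_0|,|s_0-t_0|\le 1$, each of $A_0\cup S_0$, $B_0\cup S_0$, $A_0\cup T_0$, $B_0\cup T_0$ has size $n/2\pm 1$, so combining $\delta^0(G)\ge n/2$ with $e(A_0\cup S_0,A_0\cup T_0)<\eps n^2$, a simple averaging shows that all but $O(\sqrt\eps\,n)$ vertices $v\in A_0\cup S_0$ satisfy $d^+_{B_0\cup S_0}(v)\ge n/2-\sqrt\eps\,n$, and similarly for in-edges into $A_0\cup S_0$, in-edges into $B_0\cup T_0$, and out-edges from $B_0\cup T_0$. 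I call the $O(\sqrt\eps\,n)$ vertices violating at least one of these four bounds \emph{exceptional}.

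Next I would clean the partition. For each exceptional $v$, I would inspect where its in- and out-neighbourhoods actually concentrate; $\delta^0(G)\ge n/2$ and the edge bound force exactly one of four ``archetypes'' (high out-degree to $B\cup S$, high in-degree from $B\cup T$, and so on) to describe $v$, and I would reassign $v$ to the corresponding part among $A, B, S, T$. A further swap of at most $O(\sqrt\eps\,n)$ typical vertices --- cheap because typical vertices satisfy symmetric bounds after the first step --- restores $|a-b|,|s-t|\le 1$. After this, every vertex's neighbourhood matches its part up to error $\eps^{1/2}n\ll\eps_3 n$.

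Now relabel so $a\le b$ and $s\le t$, and split on $a+b$:
\begin{enumerate}[\rm(a)]
\item If $a+b\le\eps_3 n$, then $s,t\ge n/2-\eps_3 n$, giving (P\ref{P1}) and (P\ref{P5}); the cleaned degree estimates yield (P\ref{P3})--(P\ref{P4}) and (P\ref{P6})--(P\ref{P7}); and (P\ref{P2}) follows by subtraction, since each $v\in S$ has $d^+(v)\ge n/2$ of which at most $3\eta_2 n$ leave $S$.
\item If $s+t\le\eps_3 n$, the analogous argument gives (Q\ref{Q1})--(Q\ref{Q7}).
\item Otherwise $a+b,s+t>\eps_3 n$; combined with $|a-b|,|s-t|\le 1$ this forces $a,b,s,t\ge\eps_3 n/2-1\ge\tau n$, i.e.\ (R\ref{R1})--(R\ref{R2}), and (R\ref{R3})--(R\ref{R9}) follow directly from the degree estimates since $\eps^{1/2}\le\eps^{1/3}$ and $\eps^{1/2}\le\eta_1$.
\end{enumerate}

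The main obstacle is (Q\ref{Q8}): it is only asserted under the strict inequality $a<b$ and cannot be read off from degree averages alone. To establish it I would additionally select the cleaned partition so as to minimise $e(A\cup S,A\cup T)$ subject to the size constraints; then if some $v\in B$ had $d^\pm_B(v)\ge n/20$, moving $v$ across the $A/B$ boundary (permitted precisely because $a<b$) together with a compensating typical swap would strictly decrease this edge count, contradicting the choice. The three parts of (Q\ref{Q8}) would then follow from three analogous swap arguments.
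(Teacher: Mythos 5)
Your high-level skeleton (estimate that all but $O(\sqrt\eps\,n)$ vertices are ``typical'', reassign the rest, then split on which pair of parts is negligible) matches the paper's proof of Proposition~\ref{prop:structure2}. However, two of the steps you sketch do not actually go through, and both failures point to ideas that are genuinely needed.

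First, your claim that after cleaning ``every vertex's neighbourhood matches its part up to error $\eps^{1/2}n$'' is false. The exceptional vertices you reassign cannot be made typical: a vertex $v$ for which $d^+_{B_0\cup S_0}(v)<n/2-\sqrt\eps\,n$ need not satisfy a strong archetype in any direction, since $\delta^0(G)\ge n/2$ only guarantees that, say, $d^+_{B_0\cup S_0}(v)$ or $d^+_{A_0\cup T_0}(v)$ exceeds roughly $n/4$, not $n/2-\sqrt\eps\,n$. This is precisely why (P\ref{P2}), (P\ref{P6})--(P\ref{P7}), (Q\ref{Q2}), (Q\ref{Q6})--(Q\ref{Q7}) and (R\ref{R3})--(R\ref{R5}) are stated with the weak thresholds $\eta_2 n$, $n/50$, $\eta_1 n$ rather than the tight $\eps^{1/3}n$-type thresholds of (P\ref{P3})--(P\ref{P4}), (Q\ref{Q3})--(Q\ref{Q4}) and (R\ref{R6})--(R\ref{R9}): the reassigned vertices only attain the weak level. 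In the paper this is handled by choosing the destination of each $x\in X$ with respect to a much lower degree cutoff (e.g.\ $d^\pm_{S_0}(x)\ge 2\eta_2 n$ in Case~1, $2\eta_1 n$ in Case~3), which is what $\delta^0(G)\ge n/2$ can actually guarantee. Your ``by subtraction'' derivation of (P\ref{P2}) is exactly where this gap bites: for a vertex placed in $S$ by your cleaning, you cannot assert that at most $3\eta_2 n$ of its out-edges leave $S$. The same issue undermines your ``compensating typical swap'' for restoring $|a-b|,|s-t|\le 1$ (which is in any case unnecessary, since (R\ref{R2}) only asks for $\eps_1 n$): moving a typical $A$-vertex into $B$ creates a vertex whose degree profile is completely wrong for $B$.

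Second, the minimisation argument you propose for (Q\ref{Q8}) goes in the wrong direction. A vertex $v\in B$ contributes nothing to $e(A\cup S,A\cup T)$; after moving it to $A$ it contributes $d^+_{A\cup T}(v)+d^-_{A\cup S}(v)$, which for any vertex of an $AB$-extremal graph is on the order of $n$ (having $d^\pm_B(v)\ge n/20$ does not make $d^\pm_A(v)$ small --- quite the opposite, since $s+t\le\eps_3 n$). The compensating swap of a typical $u\in A$ into $B$ removes only an $O(\eps^{1/3}n)$ contribution, so the proposed exchange strictly \emph{increases} the count and yields no contradiction. In fact a partition minimising $e(A\cup S,A\cup T)$ actively prefers to keep high-degree vertices in $B$, which is the opposite of what (Q\ref{Q8}) needs. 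The paper instead achieves (Q\ref{Q8}) constructively: it pushes $B$-vertices with $d^+_{B}\ge n/20$ into $S$ and those with $d^-_{B}\ge n/20$ into $T$, and compensates by moving vertices of $S\cup T$ with $d^\pm_B\ge n/20$ into $A$, all capped by the slack $|B_1|-|A_1|$; the maximality of these chosen sets is what forces the strict inequalities when $a<b$ remains. This needs a dedicated construction, not a generic extremality principle.
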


\begin{proof}
Consider a partition $A_0,B_0,S_0,T_0$ of $V(G)$ into sets of sizes $a_0,b_0,s_0,t_0$ such that $|a_0-b_0|,|s_0-t_0|\leq 1$ and $e(A_0\cup S_0, A_0\cup T_0) < \eps n^2$. 
Define 
\begin{align*}
&X_1:=\{x\in A_0 \cup S_0: d^+_{B_0 \cup S_0}(x) < n/2-\sqrt\eps n\},\\
&X_2:=\{x\in A_0 \cup T_0: d^-_{B_0 \cup T_0}(x) < n/2-\sqrt\eps n\},\\
&X_3:=\{x\in B_0 \cup T_0: d^+_{A_0 \cup T_0}(x) < n/2-\sqrt\eps n\},\\
&X_4:=\{x\in B_0 \cup S_0: d^-_{A_0 \cup S_0}(x) < n/2-\sqrt\eps n\}
\end{align*}
and let $X:=\bigcup_{i=1}^4 X_i$. We now compute an upper bound for $|X|$. 
Each vertex $x\in X_1$ has $d^+_{A_0\cup T_0}(x)> \sqrt\eps n$, so $|X_1|\leq \eps n^2/ \sqrt\eps n=\sqrt\eps n$.  Also, each vertex $x\in X_2$ has $d^-_{A_0\cup S_0}(x) > \sqrt\eps n$, so $|X_2|\leq \sqrt\eps n$. 
Observe that
\begin{align*}
|A_0\cup T_0|n/2-\eps n^2 &\leq e(B_0\cup T_0,A_0\cup T_0)\\
&\leq (n/2-\sqrt\eps n)|X_3|+|A_0\cup T_0|(|B_0\cup T_0|-|X_3|)
\end{align*}
which gives
$$|X_3|(|A_0\cup T_0|-n/2+\sqrt\eps n) \leq |A_0\cup T_0|(|B_0\cup T_0|-n/2)+\eps n^2\leq 2\eps n^2.$$
So $|X_3| \leq 2\eps n^2/(\sqrt\eps n/2)=4\sqrt\eps n$. Similarly, we find that $|X_4| \leq 4\sqrt\eps n$.%
\COMMENT{$|A_0\cup S_0|n/2-\eps n^2 \leq e(A_0\cup S_0, B_0 \cup S_0)
\leq (n/2-\sqrt\eps n)|X_4|+|A_0\cup S_0|(|B_0\cup S_0|-|X_4|)$}
 Therefore, $|X|\leq 10\sqrt\eps n$.

\medskip

\noindent \textbf{Case 1: } \emph{$a_0,b_0<2\tau n$.}

Let $Z:=X\cup A_0 \cup B_0$. Choose disjoint $Z_1, Z_2\subseteq Z$ so that $d^\pm_{S_0}(x) \geq 2\eta_2n$ for all $x\in Z_1$ and $d^\pm_{T_0}(x) \geq 2\eta_2n$ for all $x\in Z_2$ and $|Z_1\cup Z_2|$ is maximal. Let $S:=(S_0 \setminus X) \cup Z_1$ and $T:=(T_0\setminus X) \cup Z_2$. The vertices in $Z\setminus (Z_1\cup Z_2)$ can be partitioned into two sets $A$ and $B$ so that $d^+_S(x), d^-_T(x) \geq n/2-3\eta_2n$ for all $x\in A$ and $d^-_S(x), d^+_T(x) \geq n/2-3\eta_2n$ for all $x\in B$. The partition $A,B, S, T$ satisfies (P\ref{P1})--(P\ref{P7}).

\medskip

\noindent \textbf{Case 2: } \emph{$s_0,t_0<2\tau n$.}

Partition $X$ into four sets $Z_1, Z_2, Z_3, Z_4$ so that $d^\pm_{B_0}(x)\geq n/5$ for all $x\in Z_1$; $d^\pm_{A_0}(x)\geq n/5$ for all $x\in Z_2$; $d^+_{B_0}(x), d^-_{A_0}(x)\geq n/5$ for all $x\in Z_3$ and $d^-_{B_0}(x), d^+_{A_0}(x) \geq n/5$ for all $x\in Z_4$. Then set $A_1:=(A_0\setminus X)\cup Z_1$, $B_1:=(B_0\setminus X)\cup Z_2$.

Assume, without loss of generality, that $|A_1|\leq |B_1|$. To ensure that the vertices in $B$ satisfy (Q\ref{Q8}), choose disjoint sets $B', B''\subseteq B_1$ so that $|B'\cup B''|$ is maximal subject to: $|B'\cup B''|\leq |B_1|-|A_1|$, $d^+_{B_1}(x)\geq n/20$ for all $x\in B'$ and $d^-_{B_1}(x)\geq n/20$ for all $x\in B''$. Set $B:= B_1 \setminus (B' \cup B'')$, $S_1 :=(S_0\setminus X)\cup Z_3\cup B'$ and $T_1:=(T_0\setminus X)\cup Z_4\cup B''$. To ensure that the vertices in $S\cup T$ satisfy (Q\ref{Q8}), choose sets  $S'\subseteq S_1, T'\subseteq T_1$ which are maximal subject to:   $|S'|+|T'|\leq |B|-|A_1|$, $d^\pm_{B}(x) \geq n/20$ for all $x\in S'$ and $d^\pm_{B}(x) \geq n/20$ for all $x\in T'$. We define $A:=A_1 \cup S'\cup T'$, $S:=S_1\setminus S'$ and $T:=T_1\setminus T'$. Then $a\leq b$ and (Q\ref{Q1})--(Q\ref{Q8}) hold.%
\COMMENT{Note that $|X|\leq 10\sqrt\eps n$ so we move at most $\eps_3^2n$ vertices to get to $A,B,S,T$ from $A_0,B_0, S_0, T_0$. (Q\ref{Q1}) is clear. For (Q\ref{Q2})--(Q\ref{Q4}), note that we defined $Z_1, Z_2,S', T'$ carefully so that $\delta^0(G[A,B]) \geq n/50$. For (Q\ref{Q5}), note that $s_0+t_0<4\tau n$ so $s+t \leq 4\tau n+\eps_3^2 n\leq \eps_3 n$. (Q\ref{Q6}) and (Q\ref{Q7}) follow from the definitions of $A_1, B_1, S_1,T_1$.}

\medskip

\noindent \textbf{Case 3: } \emph{$a_0,b_0,s_0, t_0\geq 2\tau n-1$.}

The case conditions imply $a_0,b_0,s_0, t_0<n/2-\tau n$. Then, since $\delta^0(G) \geq n/2$, each vertex must have at least $2\eta_1 n$ inneighbours in at least two of the sets $A_0, B_0, S_0, T_0$. The same holds when we consider outneighbours instead.  So we can partition the vertices in $X$ into sets $Z_1, Z_2, Z_3, Z_4$ so that: $d^\pm_{B_0}(x) \geq 2\eta_1 n$ for all $x\in Z_1$; $d^\pm_{A_0}(x) \geq 2\eta_1 n$ for all $x\in Z_2$; $d^+_{B_0\cup S_0}(x), d^-_{A_0\cup S_0}(x) \geq 2\eta_1 n$ for all $x\in Z_3$ and $d^+_{A_0\cup T_0}(x), d^-_{B_0\cup T_0}(x) \geq 2\eta_1 n$ for all $x\in Z_4$.%
\COMMENT{To see that each $x$ can be put in at least one of $Z_1, Z_2, Z_3, Z_4$: Let $x\in V(G)$ and define $X^+:=\{X\in \{A_0,B_0,S_0,T_0\}: d^+_X(x) \geq 2 \eta_1 n\}$. Define $X^-$ similarly. Then $|X^+|,|X^-|\geq 2$. If $\exists X\in X^+\cap X^-$ then if $X=B_0$ can put $x$ in $Z_1$, $X=A_0$ can put $x$ in $Z_2$, $X=S_0$ put $x$ in $Z_3$ and $X=T_0$ put $x$ in $Z_4$. So assume $X^+\cap X^-=\emptyset$, so $X^+=\{A_0, B_0, S_0, T_0\} \setminus X^-$. Check for each. If $X^+=\{A_0, B_0\}$ or $\{A_0, S_0\}$ or $\{A_0, T_0\}$, can put $x$ in $Z_4$. If $X^+=\{B_0,S_0\}$ or $\{B_0,T_0\}$ or $\{S_0 , T_0\}$ put $x$ in $Z_3$.}
Let $A:=(A_0\setminus X)\cup Z_1$, $B:=(B_0\setminus X)\cup Z_2$, $S:=(S_0\setminus X)\cup Z_3$ and $T:=(T_0\setminus X)\cup Z_4$. This partition satisfies (R\ref{R1})--(R\ref{R9}).
\end{proof}

The above result implies that to prove Theorem~\ref{thm:main} for $\eps$-extremal graphs it will suffice to consider only graphs which are $ST$-extremal, $AB$-extremal or $ABST$-extremal. Indeed, to see that we may assume that $a\leq b$ and $s\leq t$, suppose that $G$ is $\eps$-extremal. Then $G$ has a partition satisfying (P\ref{P1})--(P\ref{P7}), (Q\ref{Q1})--(Q\ref{Q8}) or (R\ref{R1})--(R\ref{R9}) by Proposition~\ref{prop:structure2}. Note that relabelling the sets of the partition $(A,B,S,T)$ by $(B,A,T,S)$ if necessary allows us to assume that $a\leq b$. If $s\leq t$, then we are done. If $s>t$, reverse the orientation of every edge in $G$ to obtain the new graph $G'$. Relabel the sets $(A,B,S,T)$ by $(A,B,T,S)$. Under this new labelling, the graph $G'$ satisfies all of the original properties as well as $a\leq b$ and $s\leq t$. Obtain $C'$ from the cycle $C$ by reversing the orientation of every edge in $C$. The problem of finding a copy of $C$ in $G$ is equivalent to finding a copy of $C'$ in $G'$.


\section{$G$ is $ST$-extremal}\label{sec:ST}

The aim of this section is to prove the following lemma which settles Theorem~\ref{thm:main} in the case when $G$ is $ST$-extremal.

\begin{lemma}\label{lem:ST}
Suppose that $1/n \ll \eps_3 \ll \eps_4 \ll \eta_2 \ll 1.$
Let $G$ be a digraph on $n$ vertices such that $\delta^0(G)\geq n/2$ and $G$ is $ST$-extremal. If $C$ is any orientation of a cycle on $n$ vertices, then $G$ contains a copy of $C$.
\end{lemma}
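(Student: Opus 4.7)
The plan is to exploit the near-block structure of $G$. By (P\ref{P3})--(P\ref{P4}), the induced digraphs $G[S]$ and $G[T]$ are almost-complete of size $\sim n/2$, and by (P\ref{P6})--(P\ref{P7}) each $A$-vertex has nearly all of its inneighbours in $T$ and outneighbours in $S$, with the roles of $S$ and $T$ swapped for $B$-vertices. Following the strategy of Section~\ref{sec:sketch}, I would split $C$ into two long arcs $P_S$ and $P_T$ to be embedded into $G[S]$ and $G[T]$ respectively, using the $a+b$ exceptional vertices of $A\cup B$ as bridges at the cut points and at a small number of additional interior positions.

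The degree conditions strongly restrict where each exceptional vertex may sit on $C$. If $v\in V(C)$ is mapped to some $a\in A$, then both $C$-predecessors of $v$ must be embedded in $T$ and both $C$-successors in $S$; hence $a$ may occupy (i) a forward-bridge position $u_{i-1}u_iu_{i+1}$ with $u_{i-1}u_i$ and $u_iu_{i+1}$ both forward (moving the embedding from a $T$-block to an $S$-block), (ii) a backward-bridge position with both edges backward (moving from $S$ to $T$), (iii) a source of $C$ (both $C$-neighbours in $S$), or (iv) a sink of $C$ (both $C$-neighbours in $T$). A symmetric statement holds for $B$-vertices with $S$ and $T$ exchanged. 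In particular, each bridge position forces a transition between the two sides of the embedding, while each sink/source produces only a local dip within a single side.

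I would then case-split on $\sigma(C)$. If $\sigma(C)\ge \eps_4 n$, then $C$ has $\ge \eps_4 n\gg a+b$ sinks and equally many sources; assign $a$ of these to receive the $A$-vertices and $b$ to receive the $B$-vertices, and place exactly one $A$-vertex and one $B$-vertex at a pair of forward-bridge positions to cut $C$ into the desired arcs $(P_S,P_T)$. If $\sigma(C)\le \eps_4 n$, then after possibly reversing the whole orientation of $C$ we may assume that all but at most $O(\sigma(C))$ edges of $C$ are forward, so almost every exceptional vertex must be placed at a forward-bridge: set $\min(a,b)$ $A$-vertices at $T\to S$ forward-bridges and the same number of $B$-vertices at $S\to T$ forward-bridges, which balances the transitions on $C$. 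The imbalance $|a-b|\le \eps_3 n$ is absorbed by re-routing the excess through backward-bridges, sinks, sources, or \emph{reverse} forward-bridges (e.g.\ a $B$-vertex at a forward $T\to B\to S$-bridge, using one of its at most $3\eta_2 n$ atypical in-edges from $T$ and out-edges to $S$); this is feasible because $\eps_3\ll\eta_2$.

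Once the exceptional vertices are placed, $C$ decomposes into a bounded number of arcs whose interiors must be embedded as paths of prescribed orientation in $G[S]$ and $G[T]$ between specified endpoints. After first dealing with the at most $\eps_3 n$ atypical vertices of (P\ref{P3})--(P\ref{P4}) (by placing each at a convenient location of the relevant arc where its two assigned $C$-neighbours lie in its neighbourhood), the residual induced subdigraphs have minimum semidegree at least $7|S|/8$ and $7|T|/8$, so Proposition~\ref{prop:completepath}(i) supplies the required Hamilton paths and completes the embedding of $C$. The main obstacle is the bookkeeping in the small-$\sigma(C)$ case: one must simultaneously (i) place all $a+b$ exceptional vertices at $C$-positions of compatible polarity, (ii) balance the $T\to S$ against $S\to T$ transitions around the cycle, (iii) leave the correct numbers of free positions earmarked for $S$ and $T$, and (iv) ensure every atypical vertex of $G[S]$ or $G[T]$ ends up at a position whose two assigned $C$-neighbours it can actually reach. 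This forces a finer subdivision of the small-$\sigma(C)$ case according to $\sigma(C)$ and the distribution of forward versus backward runs in $C$, but no essentially new ideas are required.
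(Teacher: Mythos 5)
Your block picture, the classification of where $A$- and $B$-vertices may sit on $C$, and the case split on $\sigma(C)$ all match the paper's strategy, but two of your central claims are wrong.

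First, in the few-sinks case you assert that ``after possibly reversing the whole orientation of $C$ we may assume that all but at most $O(\sigma(C))$ edges of $C$ are forward.'' This is false: with $\sigma(C)=1$ the cycle can consist of a forward arc and a backward arc each of length $n/2$, and reversing the orientation of $C$ does not change this picture. Small $\sigma(C)$ bounds the \emph{number} of maximal consistently oriented runs of $C$, not their relative lengths, so you cannot reduce to an almost-forward cycle. The paper instead works with ``long runs'' (consistently oriented subpaths of length $20$): Proposition~\ref{prop:goodpaths} shows such runs occur at prescribed cyclic distances when $\sigma(C)<\eps_4 n$, and the linking edges of $G$ as well as the vertices of $A\cup B$ are placed inside long runs, avoiding any global directionality assumption on $C$.

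Second, your mechanism for absorbing the imbalance $b-a$ (and for handling small $a$) appeals to a $B$-vertex having ``at most $3\eta_2 n$ atypical in-edges from $T$ and out-edges to $S$'' and then \emph{using} one. But (P\ref{P7}) is only an upper bound; nothing forces $d^-_T(x)>0$ or $d^+_S(x)>0$ for $x\in B$, so the re-routing you describe may be entirely unavailable. The existence of usable atypical edges has to be proved, which the paper does via Proposition~\ref{prop:dedges} (yielding $b-a+1$ disjoint edges in $E(T,S\cup B)\cup E(B,S)$, derived from $\delta^0(G)\geq n/2$ and the class sizes) and Proposition~\ref{prop:2edges} (yielding two disjoint edges between $S$ and $T$ when $a\leq 1$). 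The latter is also indispensable when $a=0$ or $b=0$, since then there is no exceptional vertex available to serve as a bridge; and note that Lemma~\ref{lem:ST} must also handle antidirected $C$ (unlike Theorem~\ref{thm:main}), in which case $C$ has no forward-bridge positions whatsoever and the connection between the $S$- and $T$-halves must be made through sink/source positions and genuine edges of $G$ between $S$ and $T$.
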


We will split the proof of Lemma~\ref{lem:ST} into two cases based on how close the cycle $C$ is to being consistently oriented. Recall that $\sigma(C)$ denotes the number of sink vertices in $C$. Observe that in any oriented cycle, the number of sink vertices is equal to the number of source vertices.

\subsection{$C$ has many sink vertices, $\sigma(C)\geq \eps_4 n$}

The rough strategy in this case is as follows. We would like to embed half of the cycle $C$ into $G[S]$ and half into $G[T]$, making use of the fact that these graphs are nearly complete. At this stage, we also suitably assign the  vertices in $A\cup B$ to $G[S]$ or $G[T]$. We will partition $C$ into two disjoint paths, $P_S$ and $P_T$, each containing at least $\sigma(C)/8$ sink vertices, which will be embedded into $G[S]$ and $G[T]$. The main challenge we will face is finding appropriate edges to connect the two halves of the embedding.

\begin{lemma}\label{lem:linking1}
Suppose that $1/n \ll \eps_3 \ll \eps_4 \ll \eta_2 \ll 1.$ Let $G$ be a digraph on $n$ vertices with $\delta^0(G)\geq n/2$. Suppose $A,B,S,T$ is a partition of $V(G)$ satisfying (P\ref{P*})--(P\ref{P7}). Let $C$ be an oriented cycle on $n$ vertices with $\sigma(C) \geq \eps_4n$. Then there exists a partition $S^*, T^*$ of the vertices of $G$ and internally disjoint paths $R_1, R_2, P_S, P_T$ such that $C=(P_SR_1P_TR_2)$ and the following hold:
\begin{enumerate}[\rm(i)]
	\item $S\subseteq S^*$ and $T\subseteq T^*$;
	\item $|P_T|=|T^*|$;
	\item $P_S$ and $P_T$ each contain at least $\eps_4n/8$ sink vertices;
	\item $|R_i|\leq 3$ and $G$ contains disjoint copies $R_i^G$ of $R_i$ such that $R_1^G$ is an $ST$-path, $R_2^G$ is a $TS$-path and all interior vertices of $R_i^G$ lie in $S^*$.
\end{enumerate}
\end{lemma}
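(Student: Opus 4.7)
The plan is to first choose the partition $(S^*,T^*)$ and then locate two short subpaths of $C$ to use as $R_1,R_2$.

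\emph{Partition.} By (P\ref{P7}), every $b\in B$ satisfies $|N^-_S(b)|,|N^+_T(b)|>n/2-3\eta_2 n$, and symmetrically by (P\ref{P6}) every $a\in A$ has large $N^+_S(a)$ and $N^-_T(a)$. Hence a $B$-vertex can serve as the interior of a forward length-$2$ $ST$-path ($s\to b\to t$) or of a backward length-$2$ $TS$-path $uvw$ (with edges $v\to u$ and $w\to v$ in $C$, embedded as $u\mapsto t$, $v\mapsto b$, $w\mapsto s$). This motivates placing all of $A\cup B$ in $S^*$: set $S^*:=S\cup A\cup B$ and $T^*:=T$, giving $t^*=t\in[\lfloor n/2\rfloor-\eps_3 n,\lceil n/2\rceil+\eps_3 n]$ by (P\ref{P1}) and ensuring~(i).

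\emph{Linking paths.} Since $\sigma(C)\ge\eps_4 n$ and the sinks and sources of $C$ alternate, $C$ decomposes into $2\sigma(C)$ consistently oriented segments (alternately forward and backward). In the typical case there exist a forward segment and a backward segment, both of length $\ge 2$, positioned around $C$ so that cutting inside them yields $|P_T|=t^*$; taking $R_1$ as a length-$2$ forward subpath and $R_2$ as a length-$2$ backward subpath, I embed $R_1^G,R_2^G$ as above using distinct $B$-vertices as interiors. Disjointness with any vertices already committed is easy because each $b$ has $>n/2-3\eta_2 n$ valid choices in each of $S,T$. As the two cuts slide, $|P_T|$ changes by $\pm 1$ per unit step, so (ii) can be met; and the $\ge\eps_4 n$ sinks of $C$ are spread widely enough that at least one valid pair of cuts leaves $\ge\eps_4 n/8$ sinks on each side (a discrete intermediate-value argument on the number of sinks in $P_T$ as the cuts slide), giving (iii). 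Claim (iv) is then immediate with $|R_i|=2$.

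\emph{Main obstacle.} The delicate case is when $\sigma(C)$ is close to $n/2$, i.e.\ $C$ is nearly antidirected: then most segments have length $1$, and a forward or backward segment of length $\ge 2$ may be absent in one direction, forcing $R_i$ to straddle a source or sink of $C$. The interior of such an $R_i^G$ must then have (in- or out-)neighbours in \emph{both} $S$ and $T$, which is not guaranteed by (P\ref{P6})--(P\ref{P7}) for a single vertex. A further nuisance is that $A$ or $B$ could be empty, in which case the length-$2$ construction degenerates and we need length-$1$ linking paths, relying on the existence of at least a few direct $ST$- or $TS$-edges guaranteed by the strict condition $\delta^0(G)\ge n/2$ (rather than $\ge n/2-1$ as in the extremal example). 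Resolving these sub-cases requires a case analysis on the local orientation of $C$ at the chosen cuts, together with length-$3$ linking paths $s\to s'\to b\to t$ (using an $SS$-edge provided by (P\ref{P2})) to absorb a source/sink, or a counting argument using $\delta^0(G)\ge n/2$ to extract the necessary direct cross-edges.
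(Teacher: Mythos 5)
Your high-level plan matches the paper's: set $S^*:=S\cup A\cup B$, $T^*:=T$ (modulo small adjustments), and realize each $R_i$ via a short $G$-path whose interior uses vertices of $A\cup B$ together with (P\ref{P6})--(P\ref{P7}). You also correctly identify exactly where the difficulty lies --- linking paths that straddle a sink/source of $C$, and the degenerate cases when $A$ or $B$ is near-empty. But identifying the hard case is where your argument stops; the resolution you gesture at does not go through as stated, and what is missing is precisely where the proof does its real work.

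The specific gap: your mechanism for satisfying (ii) is to slide the two cut positions. Once you fix $S^*=S\cup A\cup B$, $T^*=T$, the target $|P_T|=t^*=t$ is a \emph{fixed} number, so the two cuts are rigidly locked a fixed cyclic distance apart; the only remaining degree of freedom is where the whole configuration sits on $C$. When $C$ is near-antidirected, almost every consistently oriented segment has length $1$, so generically neither cut will land inside a length-$\ge 2$ segment no matter where you rotate, and you are forced to straddle a sink or source of $C$. As you correctly observe, a single $A$- or $B$-vertex cannot serve as the interior of an $R_i^G$ that straddles a sink or source, because (P\ref{P6})--(P\ref{P7}) give each such vertex suitable in- and out-neighbours in only one of $S,T$ for each direction. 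Your proposed fix --- ``length-$3$ linking paths'' and ``a counting argument to extract the necessary direct cross-edges'' --- is the right instinct, but you have not verified that such edges exist or shown how they combine; and more importantly, in some cases the paper's proof uses a fundamentally different degree of freedom that your sliding framework lacks: it \emph{shortens} a linking path (to length $1$ or $2$) and compensates for the resulting shift in $|P_T|$ by moving one or two vertices of $A\cup B$ from $S^*$ into $T^*$. This changes $t^*$ itself, which your fixed partition cannot do. The paper also isolates the cross-edge existence into a separate proposition (Proposition~\ref{prop:2edges}), proving, case by case on the sizes of $A$ and $B$, that the required one or two disjoint edges in the ``wrong'' direction between $S$, $T$, $A$, $B$ exist --- none of this follows from a one-line degree count, because when $s=t=n/2$ each vertex of $S$ is guaranteed only a single out-neighbour in $T\cup A$, and one must invoke K\"onig's theorem to upgrade this to a matching.

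In short: your Steps 1--2 reconstruct the easy part of the argument, but the ``main obstacle'' paragraph is the heart of the lemma, and you have sketched its shape without proving it. A complete proof needs (a) the partition $S^*,T^*$ to depend on the local orientation of $C$ at the cut (to allow length-$1$ and length-$2$ links as well as length-$3$), and (b) an explicit supply of disjoint atypical-direction edges (the content of Proposition~\ref{prop:2edges}) to realize the $R_i^G$ when $v_{k'}$ is a sink or source.
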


In the proof of Lemma~\ref{lem:linking1} we will need the following proposition.

\begin{prop}\label{prop:2edges}
Suppose that $1/n \ll \eps_3 \ll \eps_4 \ll \eta \ll 1.$ Let $G$ be a digraph on $n$ vertices with $\delta^0(G) \geq n/2$. Suppose $A, B, S, T$ is a partition of $V(G)$ satisfying (P\ref{P*})--(P\ref{P7}).
\begin{enumerate}[\rm(i)]
	\item If $a=b \in \{0,1\}$ then there are two disjoint edges between $S$ and $T$ of any given direction.\label{prop:2edges1}
	\item If $A = \emptyset$ then there are two disjoint $TS$-edges.\label{prop:2edges2}
	\item If $a= 1$ and $b \geq 2$ then there are two disjoint $TS$-edges.\label{prop:2edges4}
	\item There are two disjoint edges in $E(S,T\cup A) \cup E(T, S\cup B)$.\label{prop:2edges6} 
\end{enumerate}
\end{prop}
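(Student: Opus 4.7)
The plan is to prove all four parts by a uniform contradiction strategy: assume no two disjoint edges of the specified type exist, so every such edge is incident to a single common vertex $w$, and derive a contradiction by combining $\delta^0(G) \geq n/2$ with the degree caps in (P\ref{P1})--(P\ref{P7}). In each part, $w$ must lie in one of $S, T, A, B$, and the relevant sub-cases are analysed in parallel.

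For (i), consider $ST$-edges first. In the common-tail sub-case $w = s^* \in S$, every $x \in S \setminus \{s^*\}$ satisfies $d^+_T(x) = 0$, so $d^+(x) \leq (s-1) + a + b$. Since $a + b \leq 2$ and $s \leq (n - a - b)/2 \leq n/2$, the lower bound $d^+(x) \geq n/2$ forces every such $x$ to be adjacent to all of $A$ and $B$. This gives $d^-_S(a^*) \geq s - 1$, which is close to $n/2$ and contradicts the cap $d^-_S(a^*) \leq 3\eta_2 n$ from (P\ref{P6}) since $\eta_2 \ll 1$. (When $a = b = 0$ the argument is immediate: $d^+(x) \leq s - 1 < n/2$.) The common-head sub-case $w = t^* \in T$ is symmetric, using $d^+_T(a^*) \leq 3\eta_2 n$ from (P\ref{P6}). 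The argument for $TS$-edges is dual, using (P\ref{P7}) on $b^*$.

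For (ii) and (iii), assume all $TS$-edges share a common vertex in $S \cup T$. In the common-head sub-case $s^* \in S$, every $x \in S \setminus \{s^*\}$ has $d^-_T(x) = 0$, so $d^-(x) \leq (s-1) + 1 + a + b$. The gap between this upper bound and the required $n/2$ is $k := n/2 - t$, which satisfies $k \leq (b+1)/2$ since $s \leq t$. A \emph{slack} argument then yields the individual per-vertex bound $d^-_B(x) \geq b - k$ for every such $x$, which sums to $e(B, S) \geq (s - 1)(b - k)$. On the other hand, (P\ref{P7}) gives $e(B, S) = \sum_{b' \in B} d^+_S(b') \leq 3 b \eta_2 n$. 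Using $(b - k)/b \geq 1/3$ (which follows from $b \geq 2k - 1$) together with $s \geq n/2 - \eps_3 n$, we deduce $(s-1)(b-k) \geq nb/7$ for large $n$, which exceeds $3b\eta_2 n$ once $\eta_2 < 1/21$. The common-tail sub-case $t^* \in T$ is handled by the symmetric slack argument on $d^+$ over $T \setminus \{t^*\}$ using the bound $d^-_T(b') \leq 3\eta_2 n$. The main obstacle in (iii) is precisely that the more naive counting $(t-1)(n/2-t) \leq 3 b \eta_2 n$, obtained by summing the shortfall directly, is not sharp enough when $b$ is of linear order in $n$; the per-vertex individual slack bound is what closes the gap.

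For (iv), the common vertex $w$ may lie in any of $S, T, A, B$. In each sub-case the assumption suppresses all edges of $E(S, T \cup A) \cup E(T, S \cup B)$ not incident to $w$; combining with $\delta^0(G) \geq n/2$ produces two complementary linear inequalities of the form $s + b \geq n/2 + c_1$ and $t + a \geq n/2 + c_2$ with $c_1 + c_2 \geq 1$, whose sum contradicts the partition identity $s + t + a + b = n$. For instance, when $w \in S$, every $x \in S \setminus \{w\}$ has $d^+_{T \cup A}(x) = 0$ and every $y \in T$ has $d^+_{(S \setminus \{w\}) \cup B}(y) = 0$, yielding $s + b \geq n/2 + 1$ and $t + a \geq n/2$. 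The sub-cases $w \in T, A, B$ are entirely analogous, each producing a pair of linear bounds that jointly exceed $n$.
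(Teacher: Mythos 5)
Your uniform strategy (assume all relevant edges pass through a single vertex $w$, then derive a degree contradiction) is a legitimate alternative to the paper's proof, and your treatment of (iv) is correct and in fact more explicit than the paper's one-line ``easy to see'' at the end of its $\min\{s+b,t+a\}$ argument. However, there are two genuine gaps in the execution.

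First, in (i), the sub-case $a=b=0$ with common vertex $w=t^*\in T$ is not actually covered by what you wrote. Your contradiction for $w\in T$ is to force $d^+_T(a^*)\geq s-1$ and appeal to the cap $d^+_T(a^*)\leq 3\eta_2n$ from (P\ref{P6}); but when $a=0$ the vertex $a^*$ does not exist, and the parenthetical ``when $a=b=0$ the argument is immediate'' is phrased only for the $w\in S$ case ($d^+(x)\leq s-1$). When $w=t^*\in T$ and $a=b=0$ you instead need to combine $d^+(x)\leq s$ for $x\in S$ with $d^-(y)\leq t-1$ for $y\in T\setminus\{t^*\}$ to get $s+t\geq n+1$, a contradiction; this is a small fix but it is not what you asserted. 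The same issue recurs in the dual argument for $TS$-edges on $b^*$.

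Second, and more substantively, in (ii) and (iii) the claimed ``symmetric slack argument on $d^+$ over $T\setminus\{t^*\}$ using $d^-_T(b')\leq 3\eta_2 n$'' does not work. With $k':=n/2-s$ and $a=0$, the best the dual slack gives is $d^+_B(y)\geq b-k'+1=k+1$ for $y\in T\setminus\{t^*\}$, where $k=n/2-t$; since $k$ can be as small as $0$ (e.g.\ $t=\lfloor n/2\rfloor$, $s=\lfloor n/2\rfloor-b$), summing yields only $e(T,B)\gtrsim t$, and comparison with $e(T,B)\leq 3b\eta_2 n$ gives $b\gtrsim 1/(6\eta_2)$ rather than a contradiction, because $b$ is allowed to be as large as $\eps_3 n$. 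The correct way to close the common-tail case is the one the paper uses (and which your ``$+1$'' in $d^-(x)\leq (s-1)+1+a+b$ inadvertently already encodes): stay on the $S$-side and observe that with $w\in T$ every $x\in S$ still satisfies $d^-_T(x)\leq 1$, so the in-degree slack bound $d^-_B(x)\geq b-k$ holds for \emph{all} $x\in S$, and the sum against $e(B,S)=\sum_{b'\in B}d^+_S(b')\leq 3b\eta_2 n$ goes through exactly as in your common-head case. As stated, though, you have pointed to the wrong degree sum and the wrong side of the partition, so this sub-case is not proved. (A further minor point: $(b-k)/b\geq 1/3$ does not literally follow from $k\leq(b+1)/2$ when $b=2$ and $n$ is odd; it is rescued by integrality, $d^-_B(x)\geq\lceil b-k\rceil$, which you should invoke.)
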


\begin{proof}
Let 
$$S':=\{x\in S : N^+_{A}(x), N^-_{B}(x) = \emptyset\} \text{ and } T':=\{x\in T : N^+_{B}(x), N^-_{A}(x) = \emptyset\}.$$
First we prove (\ref{prop:2edges1}). If $a=b \in \{0,1\}$ then it follows from (P\ref{P6}), (P\ref{P7}) that $|S'|,|T'| \geq n/4$. Since $s\leq t$, it is either the case that $s \leq (n-1)/2-b$ or $s=t=n/2-b$. If $s \leq (n-1)/2-b$ choose any $x\neq y \in S'$. Both $x$ and $y$ have at least $\lceil n/2-((n-1)/2 -b-1+b)\rceil=2$ inneighbours and outneighbours in $T$, so we find the desired edges. Otherwise $s=t=n/2-b$ and each vertex in $S'$ must have at least one inneighbour and at least one outneighbour in $T$ and each vertex in $T'$ must have at least one inneighbour and at least one outneighbour in $S$. It is now easy to check that (\ref{prop:2edges1}) holds. Indeed, K\"onig's theorem gives the two required disjoint edges provided they have the same direction. Using this, it is also easy to find two edges in opposite directions.%
\COMMENT{If we want edges in opposite directions, K\"onig's theorem gives two disjoint $ST$-edges $e_1$ and $e_2$. Choose any $t_1 \in T'$ which is disjoint from $e_1, e_2$. Now $t_1$ must have an outneighbour $s_1\in S$ and $s_1$ can lie on at most one of the edges $e_1$ and $e_2$. Then, without loss of generality, $e_1, t_1s_1$ form the required pair of edges.}

We now prove (\ref{prop:2edges2}). Suppose that $A = \emptyset$. We have already seen that the result holds when $B=\emptyset$. So assume that $b \geq 1$. Since $s \leq (n-b)/2$, each vertex in $S$ must have at least $b/2+1$ inneighbours in $T \cup B$. Assume for contradiction that there are no two disjoint $TS$-edges. Then all but at most one vertex in $S$ must have at least $b/2$ inneighbours in $B$. So $e(B, S) \geq bn/8$ which implies that there is a vertex $v \in B$ with $d^+_S(v) \geq n/8.$ But this contradicts (P\ref{P7}). So there must be two disjoint $TS$-edges.

For (\ref{prop:2edges4}), suppose that $a=1$ and $b \geq 2$. Since $s\leq (n-b-1)/2$, each vertex in $S$ must have at least $(b+1)/2$ inneighbours in $T\cup B$. Assume that there are no two disjoint $TS$-edges. Then all but at most one vertex in $S$ have at least $(b-1)/2$ inneighbours in $B$. So $e(B,S) \geq nb/12$ which implies that there is a vertex $v \in B$ with $d^+_S(v) \geq n/12$ which contradicts (P\ref{P7}). Hence (\ref{prop:2edges4}) holds.

For (\ref{prop:2edges6}), we observe that $\min\{s+b, t+a\} \leq (n-1)/2$ or $s+b=t+a=n/2$. If $s+b\leq (n-1)/2$ then each vertex in $S$ has at least two outneighbours in $T\cup A$, giving the desired edges. A similar argument works if $t+a \leq (n-1)/2$. If $s+b=t+a=n/2$ then each vertex in $S$ has at least one outneighbour in $T \cup A$ and each vertex in $T$ has at least one outneighbour in $S\cup B$. It is easy to see that there must be two disjoint edges in $E(S,T\cup A) \cup E(T, S\cup B)$.
\end{proof}

\begin{proofof}\textbf{Lemma~\ref{lem:linking1}.}
Observe that $C$ must have a subpath $P_1$ of length $n/3$ containing at least $\eps_4 n/3$ sink vertices. Let $v\in P_1$ be a sink vertex such that the subpaths $(P_1v)$ and $(vP_1)$ of $P_1$ each contain at least $\eps_4 n/7$ sink vertices. Write $C=(v_1v_2\dots v_n)$ where $v_1:=v$ and write $k':=n-t$.

\medskip

\noindent \textbf{Case 1: } \emph{$a\leq 1$}

If $a=b$, set $S^*:=S\cup A\cup B$, $T^*:=T$, $R_1:=(v_{k'}v_{k'+1})$ and $R_2:=(v_nv_1)=v_nv_1$. By Proposition~\ref{prop:2edges}(\ref{prop:2edges1}), $G$ contains a pair of disjoint edges between $S$ and $T$ of any given orientation. So we can map $v_nv_1$ to a $TS$-edge and $(v_{k'}v_{k'+1})$ to an edge between $S$ and $T$ of the correct orientation such that the two edges are disjoint.

Suppose now that $b \geq a+1$. By Proposition~\ref{prop:2edges}(\ref{prop:2edges2})--(\ref{prop:2edges4}), we can find two disjoint $TS$-edges $e_1$ and $e_2$. If $v_{k'}$ is not a source vertex, set $S^*:=S\cup A\cup B$, $T^*:=T$, $R_1:=(v_{k'-1}v_{k'}v_{k'+1})$ and $R_2:=v_nv_1$.  Map $v_nv_1$ to $e_1$.  If $v_{k'+1}v_{k'}\in E(C)$, map $R_1$ to a path of the form $SST$ which uses $e_2$. Otherwise, since $v_{k'}$ is not a source vertex, $R_1$ is a forward path. Using (P\ref{P7}), we find a forward path of the form $SBT$ for $R_1^G$.

So let us suppose that $v_{k'}$ is a source vertex. Let $b_1\in B$ and set $S^*:=S\cup A\cup B \setminus\{b_1\}$ and $T^*:=T\cup\{b_1\}$. Let $R_1:=(v_{k'-1}v_{k'})=v_{k'}v_{k'-1}$ and $R_2:=v_nv_1$. We know that $v_nv_1, v_{k'}v_{k'-1}\in E(C)$, so we can map these edges to $e_1$ and $e_2$.

In each of the above, we define $P_S$ and $P_T$ to be the paths, which are internally disjoint from $R_1$ and $R_2$, such that $C=(P_SR_1P_TR_2)$. Note that (i)--(iv) are satisfied.

\medskip

\noindent \textbf{Case 2: } \emph{$a\geq 2$}

Apply Proposition~\ref{prop:2edges}(\ref{prop:2edges6}) to find two disjoint edges $e_1, e_2 \in E(S,T\cup A) \cup E(T, S\cup B)$. Choose any distinct $x,y\in A\cup B$ such that $x$ and $y$ are disjoint from $e_1$ and $e_2$.

First let us suppose that $v_{k'}$ is a sink vertex. If $e_1, e_2 \in E(S,A)\cup E(T, S \cup B)$, set $S^*:=S\cup A\cup B$, $T^*:=T$, $R_1:=(v_{k'-1}v_{k'}v_{k'+1})$ and $R_2:=(v_nv_1v_2)$. If $e_1\in E(T, S \cup B)$, use (P\ref{P2}) and (P\ref{P7}) to find a path of the form $S(S\cup B)T$ which uses $e_1$ for $R_1^G$. If $e_1 \in E(S,A)$, we use (P\ref{P6}) to find a path of the form $SAT$ using $e_1$ for $R_1^G$. In the same way, we find a copy $R_2^G$ of $R_2$. If exactly one of $e_i$, $e_2$ say, lies in $E(S,T)$, set $S^*:=(S\cup A\cup B)\setminus\{x\}$, $T^*:=T\cup \{x\}$, $R_1:=(v_{k'-1}v_{k'}v_{k'+1})$ and $R_2:=(v_1v_2)$. Then $v_2v_1$ can be mapped to $e_2$ and we use $e_1$ to find a copy $R_1^G$ of $R_1$ as before. If both $e_1,e_2 \in E(S,T)$, set $S^*:=(S\cup A\cup B)\setminus\{x,y\}$, $T^*:=T\cup \{x,y\}$, $R_1:=(v_{k'-1}v_{k'})$ and $R_2:=(v_1v_2)$. Then map $v_2v_1$ and $v_{k'-1}v_{k'}$ to the edges $e_1$ and $e_2$.

Suppose now that $(v_{k'-1}v_{k'}v_{k'+1})$ is a consistently oriented path. If $e_2 \not \in E(S,T)$, let $S^*:=S\cup A\cup B$, $T^*:=T$, $R_1:=(v_{k'-1}v_{k'}v_{k'+1})$ and $R_2:=(v_nv_1v_2)$ and, if $e_2\in E(S,T)$, let $S^*:=(S\cup A\cup B)\setminus\{x\}$, $T^*:=T\cup \{x\}$, $R_1:=(v_{k'-1}v_{k'}v_{k'+1})$ and $R_2:=(v_1v_2)$. Then use the edge $e_2$ to find a copy $R_2^G$ of $R_2$ as above. We use (P\ref{P6}) or (P\ref{P7}) to map $R_1$ to a backward path of the form $SAT$ or a forward path of the form $SBT$ as appropriate.

We let $P_S$ and $P_T$ be paths which are internally disjoint from $R_1$ and $R_2$ such that $C=(P_SR_1P_TR_2)$. Then (i)--(iv) are satisfied.

It remains to consider the case when $v_{k'}$ is a source vertex. We now consider the vertex $v_{k'-1}$ instead of $v_{k'}$. Note that $C$ cannot contain two adjacent source vertices, so either $v_{k'-1}$ is a sink vertex or $(v_{k'-2}v_{k'-1}v_{k'})$ is a backward path. We proceed as previously. Note that when we define the path $P_T$ it will have one additional vertex and so we must allocate an additional vertex from $A\cup B$ to $T^*$, we are able to do this since $a+b>3$.%
\COMMENT{Suppose that $v_{k'-1}$ is a sink vertex. If $e_1, e_2 \in E(S,A)\cup E(T, S \cup B)$, set $S^*:=(S\cup A\cup B)\setminus\{x\}$, $T^*:=T\cup \{x\}$, $R_1:=(v_{k'-2}v_{k'-1}v_{k'})$ and $R_2:=(v_nv_1v_2)$. Find $R_1^G$ and $R_2^G$ as before. If exactly one $e_i$, $e_2$ say, lies in $E(S,T)$, we set $S^*:=S\cup A\cup B\setminus\{x,y\}$, $T^*:=T\cup \{x,y\}$, $R_1:=(v_{k'-2}v_{k'-1}v_{k'})$ and $R_2:=(v_1v_2)$. Then $v_2v_1$ can be mapped to $e_2$ and we use $e_1$ to find a copy of $R_1^G$ as before. If both $e_1,e_2 \in E(S,T)$, let $z\in A\cup B$, $z\neq x,y$ be arbitrary. Set $S^*:=S\cup A\cup B\setminus\{x,y,z\}$, $T^*:=T\cup \{x,y,z\}$, $R_1:=(v_{k'-2}v_{k'-1})$ and $R_2:=(v_1v_2)$. Then map $v_2v_1$ and $v_{k'-2}v_{k'-1}$ to the edges $e_1$ and $e_2$.\\
Suppose now that $(v_{k'-2}v_{k'-1}v_{k'})$ is a backward path. If $e_2 \not \in E(S,T)$, let $S^*:=(S\cup A\cup B)\setminus\{x\}$, $T^*:=T\cup \{x\}$, $R_1:=(v_{k'-2}v_{k'-1}v_{k'})$ and $R_2:=(v_nv_1v_2)$ and, if $e_2\in E(S,T)$, let $S^*:=S\cup A\cup B\setminus\{x,y\}$, $T^*:=T\cup \{x,y\}$, $R_1:=(v_{k'-2}v_{k'-1}v_{k'})$ and $R_2:=(v_1v_2)$. Then use the edge $e_2$ to find a copy of $R_2^G$ as above. We use (P\ref{P7}) to map $R_1$ to a backward path of the form $SAT$. Let $P_S$ and $P_T$ be such that $C=(P_SR_1P_TR_2)$.}
\end{proofof}

Apply Lemma~\ref{lem:linking1} to $G$ and $C$ to obtain internally disjoint subpaths $R_1$, $R_2$, $P_S$ and $P_T$ of $C$ as well as a partition $S^*,T^*$ of $V(G)$. Let $R_i^G$ be copies of $R_i$ in $G$ satisfying the properties of the lemma. Write $R'$ for the set of interior vertices of the $R_i^G$. Define $G_S:= G[S^*\setminus R']$ and $G_T:=G[T^*]$. Let $x_T$ and $x_S$ be the images of the final vertices of $R_1$ and $R_2$ and let $y_S$ and $y_T$ be the images of the initial vertices of $R_1$ and $R_2$, respectively. Also, let
$V_S:=S^* \cap (A \cup B)$ and $V_T:=T^* \cap (A \cup B)$.

The following proposition allows us to embed copies of $P_S$ and $P_T$ in $G_S$ and $G_T$. The idea is to greedily find a short path which will contain all of the vertices in $V_S$ and $V_T$ and any vertices of ``low degree''. We then use that the remaining graph is nearly complete to complete the embedding.

\begin{prop}\label{prop:pathembedding}
Let $G_S$, $P_S$, $P_T$, $x_S$, $y_S$, $x_T$ and $y_T$ be as defined above. 
\begin{enumerate}[\rm(i)]
	\item There is a copy of $P_S$ in $G_S$ such that the initial vertex of $P_S$ is mapped to $x_S$ and the final vertex is mapped to $y_S$.
	\item There is a copy of $P_T$ in $G_T$ such that the initial vertex of $P_T$ is mapped to $x_T$ and the final vertex is mapped to $y_T$.
\end{enumerate}
\end{prop}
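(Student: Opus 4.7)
\emph{Approach.} I will prove (i); part (ii) is entirely analogous, except that by the proof of Lemma~\ref{lem:linking1} the sinks of $P_T$ concentrate near its final vertex $y_T$ (rather than near its initial vertex), so the absorption is performed at the end of $P_T$ instead of at its start. The strategy for (i) is a standard absorption argument: first embed an initial segment of $P_S$ in $G_S$ to cover all ``exceptional'' vertices, then complete the embedding through the remaining near-complete digraph via Proposition~\ref{prop:completepath}(i).

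\emph{Exceptional set and placement constraints.} Let
$$L := V_S \cup \{x \in S : d^+_S(x) < n/2 - \eps_3 n \text{ or } d^-_S(x) < n/2 - \eps_3 n\}.$$
By (P\ref{P4}) and (P\ref{P5}), $|L| \leq 2\eps_3 n$, and every $x \in V(G_S) \setminus L$ has $d^\pm_{G_S}(x) \geq n/2 - 2\eps_3 n$. The essential placement constraint is that (P\ref{P6}) gives $d^-_S(a) \leq 3\eta_2 n$ for every $a \in V_S \cap A$, so both neighbours of $a$ in the image of $P_S$ must be outneighbours of $a$, forcing $a$ to be the image of a \emph{source} vertex of $P_S$. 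Symmetrically, by (P\ref{P7}), each $b \in V_S \cap B$ must be the image of a \emph{sink} vertex of $P_S$.

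\emph{Pre-assignment and embedding the initial segment.} The proof of Lemma~\ref{lem:linking1} chose $v_1$ to be a sink with the property that the subpath $(v_1 P_1)$ of $C$ has length at most $n/3$ and contains at least $\eps_4 n/7$ sinks; intersecting with $P_S$ gives an initial subpath of $P_S$ of length at most $n/3$ with at least $\eps_4 n/7 - 1$ sinks and hence a comparable number of sources (since sinks and sources alternate on $C$). Because $\eps_3 \ll \eps_4$, within this initial subpath we may select a set of $|V_S \cap A|$ source positions and $|V_S \cap B|$ sink positions that are pairwise non-adjacent and distinct from $p_1, p_2$, and assign each element of $V_S$ to a distinct marked position. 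We also designate well-separated positions for the vertices of $L \setminus V_S$. Let $P_S' = p_1 p_2 \dots p_l$ be the shortest initial segment of $P_S$ containing all designated positions (extended by one if needed so that $p_l$ is unmarked); then $l \leq n/3$. We embed $P_S'$ greedily: set $\phi(p_1) := x_S$, and at each subsequent step map $p_i$ to its assigned vertex (if any) or to an arbitrary unused vertex of $S \setminus L$. The required oriented edge from $\phi(p_{i-1})$ always exists because vertices in $S \setminus L$ have $\geq n/2 - \eps_3 n$ in- and out-neighbours in $S$; vertices of $V_S \cap A$ and $V_S \cap B$ have $\geq n/2 - 3\eta_2 n$ out- and in-neighbours in $S$ respectively by (P\ref{P6})--(P\ref{P7}); vertices in $L \setminus V_S$ have $\geq \eta_2 n$ in- and out-neighbours in $S$ by (P\ref{P2}); and at most $n/3 + O(1)$ vertices have been placed.

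\emph{Completion.} Let $G' := G_S - \phi(\{p_1, \dots, p_{l-1}\})$ and $m := |V(G')|$. Every vertex of $G'$ other than possibly $y_S$ lies in $S \setminus L$ and so has in- and out-degree in $G'$ at least $n/2 - \eps_3 n - (l-1) \geq 7m/8$, as one checks from $m \leq n/2 + 2\eps_3 n$, $l \leq n/3$, and $\eps_3 \ll 1$. If $y_S \notin L$, Proposition~\ref{prop:completepath}(i) applied to $G'$ yields a Hamilton path of the required orientation from $\phi(p_l)$ to $y_S$. If $y_S \in L$, we first pick an unused $y' \in S \setminus L$ which is a neighbour of $y_S$ of the direction required by the last edge of $P_S$ (possible by (P\ref{P2}) since only $l \leq n/3$ vertices have been used), apply Proposition~\ref{prop:completepath}(i) to $G' - y_S$ to find a Hamilton path from $\phi(p_l)$ to $y'$ of the required orientation, and then append the edge between $y'$ and $y_S$. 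The main obstacle throughout is matching $V_S \cap A$ with source positions and $V_S \cap B$ with sink positions of $P_S$; the special choice of $v_1$ in Lemma~\ref{lem:linking1} was made precisely to guarantee an abundance of both near the start of $P_S$.
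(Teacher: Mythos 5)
Your overall strategy is genuinely different from the paper's. The paper locates (via an averaging argument) a \emph{short} subpath $P$ of $P_S$, of order at most $\eps_4 n$, containing $\geq \sqrt{\eps_3}\,n$ sinks; it absorbs $V_S\cup X$ into an embedded copy of $P$, and then — since $P$ may sit anywhere inside $P_S$ — splits the remaining vertices of $G_S$ into two sets and applies Proposition~\ref{prop:completepath}(i) twice (with a separate case when one of the two remaining subpaths of $P_S$ is short). You instead observe that by the specific choice of $v_1$ in the proof of Lemma~\ref{lem:linking1}, the sinks of $P_S$ are concentrated in the first $\approx n/3$ vertices of $P_S$ (and those of $P_T$ in its last $\approx n/3$ vertices), absorb into one long initial segment, and finish with a single application of Proposition~\ref{prop:completepath}(i). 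This is a legitimate simplification of the structure of the argument, but it is bought at the cost of relying on information about $P_S$ and $P_T$ that is not recorded in the statement of Lemma~\ref{lem:linking1} (only item~(iii), that each side has $\geq \eps_4 n/8$ sinks, is stated); the paper's proof works from that weaker, stated hypothesis.

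More importantly, there is a genuine gap in your greedy absorption step. The vertices of $L\setminus V_S$ (i.e.\ those $x\in S$ with $d^+_S(x)<n/2-\eps_3 n$ or $d^-_S(x)<n/2-\eps_3 n$) are only guaranteed $d^\pm_{S}(x)\geq \eta_2 n$ by (P\ref{P2}). Your justification that ``at most $n/3+O(1)$ vertices have been placed'' does not help for these vertices, because $\eta_2 n\ll n/3$: if such a vertex is placed at a late position in the initial segment, all of its $\eta_2 n$ in- (or out-)neighbours in $S$ may already have been used, and the greedy step fails. The fix is to insist that the positions designated for $L\setminus V_S$ lie within, say, the first $\eps_4 n$ vertices of $P_S'$ — which is possible since they need not be sink/source positions — but your write-up does not say this, and as written the degree justification for these vertices is false. (A related, smaller issue: the greedy rule ``map $p_i$ to its assigned vertex or to an arbitrary unused vertex of $S\setminus L$'' needs a look-ahead when $p_{i+1}$ is pre-assigned, since $\phi(p_i)$ must then be chosen to be a correctly-oriented neighbour of the pre-assigned image; ``arbitrary'' is not enough. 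The same point affects your treatment of the case $y_S\in L$. The paper sidesteps this entirely by assuming from the outset that $x_S, y_S\in S\setminus X$, moving to the second and penultimate vertices of $P_S$ otherwise.)
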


\begin{proof}
We prove (i), the proof of (ii) is identical. Write $P_S=(u_1u_2\dots u_k)$. An averaging argument shows that there exists a subpath $P$ of $P_S$ of order at most $\eps_4 n$ containing at least $\sqrt{\eps_3}n$ sink vertices.%
\COMMENT{Consider a partition of $P_S$ into subpaths of order $\eps_4 n$. If there does not exist such $P$ then the number of sink vertices in $P_S$ is at most $|P_S|(\sqrt{\eps_3}n+1)/(\eps_4n) < \eps_4n/8$, a contradiction.}

Let 
$X:= \{x\in S : d^+_S(x)< n/2-\eps_3 n \text{ or } d^-_S(x) < n/2-\eps_3 n\}$. By (P\ref{P3}), $|X| \leq \eps_3n$ and so, using (P\ref{P2}), we see that every vertex $x\in X$ is adjacent to at least $\eta_2n/2$ vertices in $S\setminus X$. So we can assume that $x_S, y_S \in S \setminus X$ since otherwise we can embed the second and penultimate vertices on $P_S$ to vertices in $S\setminus X$ and consider these vertices instead.

Let $u_1'$ be the initial vertex of $P$ and $u_k'$ be the final vertex. Define $m_1 := d_{P_S}(u_1,u_1')+1$ and $m_2:=d_{P_S}(u_k',u_k)+1$. Suppose first that $m_1,m_2 > \eta_2^2n$. We greedily find a copy $P^G$ of $P$ in $G_S$ which covers all vertices in $V_S\cup X$ such that $u_1'$ and $u_k'$ are mapped to vertices $s_1,s_2\in S \setminus X$. This is possible since any two vertices in $X$ can be joined by a path of length at most three of any given orientation, by (P\ref{P2}) and (P\ref{P3}), and we can use each vertex in $V_S$ as the image of a sink or source vertex of $P$.
Partition $(V(G_S)\setminus V(P^G)) \cup \{s_1,s_2\}$, arbitrarily, into two sets $L_1$ and $L_2$ of size $m_1$ and $m_2$ respectively so that $s_1,x_S \in L_1$ and $s_2,y_S \in L_2$. Consider the graphs $G_i:=G_S[L_i]$ for $i=1,2$. Then (P\ref{P3}) implies that
$\delta(G_i) \geq m_i-\eps_3 n -\eps_4n \geq 7m_i/8.$
Applying Proposition~\ref{prop:completepath}(i), we find suitably oriented Hamilton paths from $s_1$ to $x_S$ in $G_1$ and $s_2$ to $y_S$ in $G_2$ which, when combined with $P$, form a copy of $P_S$ in $G_S$ (with endvertices $x_S$ and $y_S$).

It remains to consider the case when $m_1< \eta_2^2n$ or $m_2 < \eta_2^2n$. Suppose that the former holds (the latter is similar). Let $P'$ be the subpath of $P_S$ between $u_1$ and $u_k'$. So $P \subseteq P'$. Similarly as before, we first greedily find a copy of $P'$ in $G_S$ which covers all vertices of $X\cup V_S$ and then extend this to an embedding of $P_S$.%
\COMMENT{Greedily embed into $G_S$ a path starting from $x_S$ which is isomorphic to $P'$ and incorporates all vertices in $V_S$, as sink or source vertices, and all vertices in $X$. We ensure that the final vertex of $P'$, $u_1'$, is mapped to a vertex $s_1\in S \setminus X$. Now $|P'| \leq 2\eta_2^2n$. Let $S':=(V(G_S) \setminus V((P')^G))\cup \{s_1\}$ and consider the graph $G_S':=G_S[S']$. Note that $\delta^0(G_S') \geq n/2-\eps_3n -2\eta_2^2n \geq 7|G_S'|/8$ so $G_S'$ has a spanning path of any given orientation from $s_1$ to $y_S$, by Proposition~\ref{prop:completepath}(i), and we obtain an embedding of $P_S$ in $G_S$.}
\end{proof}

Proposition~\ref{prop:pathembedding} allows us to find copies of $P_S$ and $P_T$ in $G_S$ and $G_T$ with the desired endvertices. Combining these with $R_1^G$ and $R_2^G$ found in Lemma~\ref{lem:linking1}, we obtain a copy of $C$ in $G$. This proves Lemma~\ref{lem:ST} when $\sigma(C) \geq \eps_4n$.

\subsection{$C$ has few sink vertices, $\sigma(C)<\eps_4n$}

Our approach will closely follow the argument when $C$ had many sink vertices. The main difference will be how we cover the exceptional vertices, i.e. the vertices in $A\cup B$. We will call a consistently oriented subpath of $C$ which has length $20$ a \emph{long run}. If $C$ contains few sink vertices, it must contain many of these long runs. So, whereas previously we used sink and source vertices, we will now use long runs to cover the vertices in $A\cup B$.

\begin{prop}\label{prop:goodpaths}
Suppose that $1/n \ll \eps \ll 1$ and $n/4 \leq k\leq 3n/4$. Let $C$ be an oriented cycle with $\sigma(C) < \eps n$. Then we can write $C$ as $(u_1u_2 \dots u_n)$ such that there exist:
\begin{enumerate}[\rm(i)]
\item Long runs $P_1, P_2$ such that $P_1$ is a forward path and $d_C(P_1, P_2)=k$,
\item Long runs $P_1', P_2', P_3', P_4'$ such that $d_C(P_i', P_{i+1}')=\lfloor n/4 \rfloor$ for $i=1,2,3$.
\end{enumerate}
\end{prop}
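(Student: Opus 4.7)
The plan is to count, for a chosen labelling direction of $C$, the positions at which long runs begin, and then deduce (i) and (ii) by a simple intersection bound. Let $F \subseteq V(C)$ be the set of vertices at which a forward long run starts, and let $G \supseteq F$ be the set of vertices at which any long run (forward or backward) starts. All the real work is to show that $|F|$ is at least roughly $n/2$ and $|G|$ at least roughly $n$; once we have this, (i) and (ii) follow by one-line pigeonhole arguments.

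For the lower bounds, the number of sources in $C$ equals the number of sinks, so $C$ has exactly $2\sigma(C) < 2\eps n$ direction changes, partitioning $C$ into at most $2\eps n$ maximal consistently oriented arcs. The arcs of length less than $20$ have total length less than $40\eps n$, so the long arcs (length $\geq 20$) have total length at least $n - 40\eps n$. A maximal arc of length $\ell \geq 20$ contributes exactly $\ell - 19$ initial vertices to $G$, so summing over all long arcs,
\[
|G| \geq (n - 40\eps n) - 19 \cdot 2\eps n \geq n - 78\eps n.
\]
Reversing the labelling direction swaps forward and backward long runs and interchanges $|F|$ with $|G \setminus F|$; after this reversal (if needed) we may assume $|F| \geq |G|/2 \geq n/2 - 39\eps n$.

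For (i), the set of $v \in F$ such that the vertex $k$ steps ahead of $v$ in $C$ lies in $G$ has size
\[
|F \cap (G - k)| \geq |F| + |G| - n \geq n/2 - 117\eps n > 0.
\]
Picking such a $v$, letting $P_1$ be the forward long run starting at $v$ and $P_2$ a long run starting at the vertex $k$ steps ahead, we obtain (i); the hypothesis $n/4 \leq k \leq 3n/4$ together with $1/n \ll \eps \ll 1$ ensures that the two $21$-vertex long runs are disjoint. For (ii), writing $q = \lfloor n/4 \rfloor$, iterating the same bound gives
\[
|G \cap (G - q) \cap (G - 2q) \cap (G - 3q)| \geq 4|G| - 3n \geq n - 312\eps n > 0,
\]
so any $v'$ in this intersection together with the vertices $q, 2q, 3q$ steps ahead of $v'$ provides the four long runs $P_1', P_2', P_3', P_4'$.

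Both (i) and (ii) are invariant under rotation of the labelling, so a single labelling direction serves both simultaneously and the choice of $u_1$ is free. The only step that really requires thought is the direction reversal: a priori every long arc of $C$ could be backward, leaving $F$ empty and (i) unattainable, but the symmetry $|F| \geq |G \setminus F|$ in one of the two labelling directions dispatches this case. Beyond that, the argument is pure counting.
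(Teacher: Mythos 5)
Your proof is correct, and it takes a genuinely different route from the paper's. The paper works \emph{locally}: it fixes a short subpath $P$ of $C$ of length $n/8$, extracts a maximum \emph{consistent} (same-direction) collection $\cQ$ of $\geq 2\eps n$ disjoint long runs inside $P$, orients $C$ so they are forward, and argues by contradiction: if (i) (resp.\ (ii)) failed, then for each $Q_i\in\cQ$ the shifted path at distance $k$ (resp.\ at distance $j\lfloor n/4\rfloor$ for some $j$) would have to contain a sink or source, and since these shifted paths are pairwise disjoint one obtains more than $\sigma(C)$ sinks, a contradiction. Your argument is \emph{global}: you count initial vertices of long runs across all of $C$, bound $|G|\geq n-78\eps n$ via the arc decomposition, gain $|F|\geq |G|/2$ by the labelling-reversal symmetry, and close both parts with inclusion--exclusion on translates of $F$ and $G$. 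Both proofs rest on the same underlying fact (few sink vertices force a dense set of long runs) and the same pigeonhole spirit, but your version is a bit more explicit and in addition yields a quantitative conclusion — $\Omega(n)$ valid starting positions for $P_1$ and for $P_1'$ — where the paper only needs existence. One remark on the reversal step: you identify $|F|$ with $|G\setminus F|$ under reversal; strictly speaking these sets in the two labellings differ by a shift of $20$ positions, but the cardinalities agree, which is all you use. You also correctly note that parts (i) and (ii) are both achievable in the \emph{same} labelling, which is what the proposition requires since a single enumeration of $C$ must serve both; the paper handles this implicitly by fixing the labelling once at the start via $\cQ$.
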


\begin{proof}
Let $P$ be a subpath of $C$ of length $n/8$. Let $\cQ$ be a consistent collection of vertex disjoint long runs in $P$ of maximum size. Then $|\cQ| \geq 2\eps n$, with room to spare. We can write $C$ as $(u_1u_2 \dots u_n)$ so that the long runs in $\cQ$ are forward paths.

Suppose that (i) does not hold. For each $Q_i \in \cQ$, let $Q_i'$ be the path of length $20$ such that $d_C(Q_i,Q_i')=k$. Since $Q_i'$ is not a long run, $Q_i'$ must contain at least one sink or source vertex. The paths $Q_i'$ are disjoint so, in total, $C$ must contain at least
$|\cQ|/2 \geq \eps n > \sigma(C)$ sink vertices, a contradiction. Hence (i) holds.

We call a collection of four disjoint long runs $P_1, P_2, P_3, P_4$ \emph{good} if $P_1 \in \cQ$ and $d_C(P_i,P_{i+1}) = \lfloor n/4 \rfloor$ for all $i=1, 2, 3$. Suppose $C$ does not contain a good collection of long runs. In particular, this means that each long run in $\cQ$ does not lie in a good collection. For each path $Q_i \in \cQ$, let $Q_{i,1}, Q_{i,2}, Q_{i,3}$ be subpaths of $C$ of length $20$ such that $d_C(Q_i, Q_{i,j})=j\lfloor n/4 \rfloor$. Since $\{Q_i, Q_{i,1}, Q_{i,2}, Q_{i,3}\}$ does not form a good collection, at least one of the $Q_{i,j}$ must contain a sink or source vertex. The paths $Q_{i,j}$ where $Q_i \in \cQ$ and $j=1,2,3$ are disjoint so, in total, $C$ must contain at least
$|\cQ|/2 \geq \eps n>\sigma(C)$ sink vertices, which is a contradiction. This proves (ii).
\end{proof}

The following proposition finds a collection of edges oriented in an atypical direction for an $\eps$-extremal graph. We will use these edges to find consistently oriented $S$- and $T$-paths covering all of the vertices in $A\cup B$. This proposition will be used again in Section~\ref{sec:ABST1}, where it allows us to correct an imbalance in the sizes of $A$ and $B$.

\begin{prop}\label{prop:dedges}
Let $G$ be a digraph on $n$ vertices with $\delta^0(G) \geq n/2$. Let $d\geq 0$ and suppose $A, B, S, T$ is a partition of $V(G)$ into sets of size $a,b,s,t$ with $t\geq s\geq d+2$ and $b = a+d$. Then $G$ contains a collection $M$ of $d+1$ edges in $E(T,S \cup B) \cup E(B, S)$ satisfying the following. The endvertices of $M$ outside $B$ are distinct and each vertex in $B$ is the endvertex of at most one $TB$-edge and at most one $BS$-edge in $M$. Moreover, if $e(T,S)>0$, then $M$ contains a $TS$-edge.
\end{prop}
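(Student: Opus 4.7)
\medskip
\noindent\textbf{Proof proposal.}
My plan is to reformulate the problem as finding a matching in an auxiliary bipartite graph. Construct $H$ with left part $L := T \cup B^{\mathrm{out}}$ and right part $R := S \cup B^{\mathrm{in}}$, where $B^{\mathrm{out}}$ and $B^{\mathrm{in}}$ are two disjoint copies of $B$ (representing, respectively, the ``out-slot'' and ``in-slot'' of each vertex of $B$). Put an edge in $H$ between:
$t\in T$ and $s\in S$ whenever $ts\in E(G)$;
$t\in T$ and $b^{\mathrm{in}}\in B^{\mathrm{in}}$ whenever $tb\in E(G)$; and
$b^{\mathrm{out}}\in B^{\mathrm{out}}$ and $s\in S$ whenever $bs\in E(G)$.
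A matching in $H$ of size $d+1$ corresponds exactly to a collection $M$ of $d+1$ edges in $E(T,S\cup B)\cup E(B,S)$ satisfying the constraints of the proposition: edges are vertex-disjoint on the $T$- and $S$-sides (since these vertices appear only once in $L\cup R$), and each $b\in B$ is the head of at most one $TB$-edge (its in-slot is used at most once) and the tail of at most one $BS$-edge.

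It remains to show that $H$ has a matching of size $d+1$. Suppose for contradiction, via K\"onig's theorem, that $H$ admits a vertex cover $V_T\cup V_B^{\mathrm{out}}\cup V_S\cup V_B^{\mathrm{in}}$ of total size at most $d$, with $V_T\subseteq T$, $V_S\subseteq S$, $V_B^{\mathrm{out}},V_B^{\mathrm{in}}\subseteq B$. Since $s,t\geq d+2 > d$, we may pick $t_1\in T\setminus V_T$ and $s_1\in S\setminus V_S$. The cover property forces $N^+_S(t_1)\subseteq V_S$ and $N^+_B(t_1)\subseteq V_B^{\mathrm{in}}$, so
$$n/2\;\leq\;d^+(t_1)\;\leq\;a+(t-1)+|V_S|+|V_B^{\mathrm{in}}|.$$
Symmetrically from $s_1$, $|V_T|+|V_B^{\mathrm{out}}|\geq n/2-a-s+1$. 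Adding the two inequalities and substituting $n=2a+d+s+t$ yields total cover size at least $d+2$, contradicting the assumption.

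For the ``moreover'' part, assume $e(T,S)>0$ and fix some $TS$-edge $t_0s_0$. Apply the same vertex-cover argument to $H':=H-\{t_0,s_0\}$, aiming for a matching of size $d$ there. A cover of $H'$ of size $\leq d-1$ together with a choice of $t_1\in T\setminus(V_T\cup\{t_0\})$, $s_1\in S\setminus(V_S\cup\{s_0\})$ (which exist since $s-1,t-1\geq d+1$) now gives the two inequalities
$n/2\leq a+(t-1)+|V_S|+1+|V_B^{\mathrm{in}}|$ and $n/2\leq a+(s-1)+|V_T|+1+|V_B^{\mathrm{out}}|$,
where the extra $+1$ accounts for the possibility that $t_1$ (respectively $s_1$) sends an edge to the removed vertex $s_0$ (respectively from $t_0$). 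Summing yields cover size at least $d$, contradicting $\leq d-1$. Combining the resulting matching with $t_0s_0$ gives the desired $M$.

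The potential obstacle I would worry about is the bookkeeping in the ``moreover'' step, specifically making sure that the loss caused by removing $t_0,s_0$ (which shrinks the candidate sets and allows the extra neighbours $t_0,s_0$ on the ``forbidden'' sides) is compensated by the slack in the inequality $s\geq d+2$. This is exactly why the hypothesis requires $s\geq d+2$ rather than $s\geq d+1$: the base case needs slack $2$ to run the contradiction, and the augmented case needs slack $1$ after one vertex is deleted from each side.
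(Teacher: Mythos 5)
Your proposal is correct. The auxiliary bipartite graph is exactly the one the paper uses (its vertex classes are $S\cup B$ and $T\cup B$, where the two copies of $B$ play the role of your in-slot and out-slot), and the K\"onig counting for the basic matching bound is the same calculation: you add the two inequalities coming from $t_1\in T\setminus V_T$ and $s_1\in S\setminus V_S$ to get cover size at least $d+2$ directly, whereas the paper splits into cases depending on which side of the cover is small and reaches a contradiction from one vertex at a time; the arithmetic is identical and your additive version neatly sidesteps the small edge case the paper handles in a comment.

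Where you genuinely depart from the paper is the ``moreover'' step. The paper pushes the same count to conclude outright that the matching has size $d+2$ (your inequality already yields cover size $\geq d+2$, so you could have claimed this for free); it then observes that any fixed $TS$-edge $t_0s_0$ meets at most two edges of the matching (one through $t_0$, one through $s_0$), so discarding those and inserting $t_0s_0$ still leaves $d+1$ mutually compatible edges containing a $TS$-edge. You instead delete $t_0,s_0$ from $H$, re-run the cover argument in the residual graph $H'$ to find a size-$d$ matching disjoint from $\{t_0,s_0\}$, and re-attach the $TS$-edge. Both are valid. The paper's device is a little shorter because it avoids re-running the count, while your route makes the role of the slack $s\geq d+2$ especially transparent (a unit of slack absorbed by the base count, a second unit absorbed by the deletion). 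Since your base argument already proves the stronger bound $d+2$, you could have borrowed the paper's discard-and-replace observation and saved yourself the second K\"onig pass.
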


\begin{proof}
Let $k:=t-s$.
We define a bipartite graph $G'$ with vertex classes $S':=S\cup B$ and $T':=T\cup B$ together with all edges $xy$ such that $x \in S', y\in T'$ and $yx\in E(T,S \cup B) \cup E(B, S)$. We claim that $G'$ has a matching of size $d+2$. 
To prove the claim, suppose that $G'$ has a vertex cover $X$ of size $|X|< d+2$. Then $|X\cap S'|<(d-k)/2+1$ or $|X\cap T'|<(d+k)/2+1$. 
Suppose that the former holds and consider any vertex $t_1 \in T \setminus X$. Since $\delta^+(G) \geq n/2$ and $a+t=(n-d+k)/2$, $t_1$ has at least $(d-k)/2+1$ outneighbours in $S'$. But these vertices cannot all be covered by $X$. So we must have that $|X\cap T'|<(d+k)/2+1$. Consider any vertex $s_1 \in S \setminus X$. Now $\delta^-(G)\geq n/2$ and $a+s=(n-d-k)/2$, so $s_1$ must have at least $(d+k)/2+1$ inneighbours in $T'$. But not all of these vertices can be covered by $X$. Hence, any vertex cover of $G'$ must have size at least $d+2$ and so K\"onig's theorem implies that $G'$ has a matching of size $d+2$.%
\COMMENT{Argument is fine if $(d-k)/2+1 \leq 0$. Then we must have that $|X\cap T'|<d+2\leq (d+k)/2+1$ and argument follows through.}

If $e(T,S)>0$, either the matching contains a $TS$-edge, or we can choose any $TS$-edge $e$ and at least $d$ of the edges in the matching will be disjoint from $e$. This corresponds to a set of $d+1$ edges in $E(T,S \cup B) \cup E(B, S)$ in $G$ with the required properties.
\end{proof}

We define a \emph{good path system} $\cP$ to be a collection of disjoint $S$- and $T$-paths such that each path $P \in \cP$ is consistently oriented, has length at most six and covers at least one vertex in $A\cup B$. Each good path system $\cP$ gives rise to a modified partition $A_{\cP}, B_{\cP}, S_{\cP}, T_{\cP}$  of the vertices of $G$ (we allow $A_{\cP}$, $B_{\cP}$ to be empty) as follows. Let $\textrm{Int}_S(\cP)$ be the set of all interior vertices on the $S$-paths in $\cP$ and $\textrm{Int}_T(\cP)$ be the set of all interior vertices on the $T$-paths. We set $A_{\cP}:=A \setminus V(\cP)$, $B_{\cP}:=B \setminus V(\cP)$, $S_{\cP}:=(S\cup \textrm{Int}_S(\cP))\setminus\textrm{Int}_T(\cP)$ and $T_{\cP}:=(T\cup \textrm{Int}_T(\cP)) \setminus \textrm{Int}_S(\cP)$ and say that $A_{\cP}, B_{\cP}, S_{\cP}, T_{\cP}$ is the \emph{$\cP$-partition} of $V(G)$.

\begin{lemma}\label{lem:linking2}
Suppose that $1/n \ll \eps_3 \ll \eps_4 \ll \eta_2 \ll 1.$ Let $G$ be a digraph on $n$ vertices with $\delta^0(G) \geq n/2$. Suppose $A, B, S, T$ is a partition of $V(G)$ satisfying (P\ref{P*})--(P\ref{P7}). Let $C$ be a cycle on $n$ vertices with $\sigma(C) < \eps_4n$. 
Then there exists $t^*$ such that one of the following holds:
\begin{itemize}
	\item There exist internally disjoint paths $P_S, P_T, R_1, R_2$ such that:
	\begin{enumerate}[\rm(i)]
		\item $C=(P_SR_1P_TR_2)$;
		\item $|P_T|=t^*$;
		\item $R_1$ and $R_2$ are paths of length two and $G$ contains disjoint copies $R_i^G$ of $R_i$ whose interior vertices lie in $V(G) \setminus T$. Moreover, $R_1^G$ is an $ST$-path and $R_2^G$ is a $TS$-path.
	\end{enumerate}
	\item There exist internally disjoint paths $P_S, P_S', P_T, P_T', R_1, R_2, R_3, R_4$ such that: 
	\begin{enumerate}[\rm(i)]
		\item $C=(P_SR_1P_TR_2P_S'R_3P_T'R_4)$;
		\item $|P_T|+|P_T'|=t^*$ and $|P_S|, |P_S'|, |P_T|, |P_T'| \geq n/8$;
		\item $R_1,R_2,R_3, R_4$ are paths of length two and $G$ contains disjoint copies $R_i^G$ of $R_i$ whose interior vertices lie in $V(G) \setminus T$. Moreover, $R_1^G$ and $R_3^G$ are $ST$-paths and $R_2^G$ and $R_4^G$ are $TS$-paths.			
	\end{enumerate}
\end{itemize}
Furthermore, $G$ has a good path system $\cP$ such that the paths in $\cP$ are disjoint from each $R_i^G$, $\cP$ covers $(A\cup B)\setminus \bigcup V(R_i^G)$ and the $\cP$-partition $A_{\cP}, B_{\cP}, S_{\cP}, T_{\cP}$  of $V(G)$ satisfies $|T_{\cP}|=t^*$. 
\end{lemma}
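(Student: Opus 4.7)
The strategy has two stages: first, construct a good path system $\cP$ that absorbs (almost) all of $A\cup B$; second, apply Proposition~\ref{prop:goodpaths} to slice $C$ into the requisite pieces and place the length-two bridges $R_i^G$.

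\textbf{Stage 1: the good path system.} By (P\ref{P6}) and (P\ref{P7}), each $a\in A$ has many typical edges $T\to a$ and $a\to S$, and each $b\in B$ has many typical edges $S\to b$ and $b\to T$. Pairing an $a$ with a $b$ therefore yields two consistently oriented length-four paths using only typical edges: the forward $T$-path $TaSbT$ and the forward $S$-path $SbTaS$. Since $a\le b$, I would form $a$ such disjoint pairs, freely choosing between the $T$- and $S$-variant for each so as to tune $t^*:=|T_\cP|$ within a window around $t$. For the remaining $d:=b-a$ unpaired $B$-vertices, invoke Proposition~\ref{prop:dedges} (applicable since $s\ge d+2$ by (P\ref{P1}) and (P\ref{P5})) to obtain $d+1$ atypical edges $M\subseteq E(T,S\cup B)\cup E(B,S)$. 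Each edge of $M$ extends, using (P\ref{P2})--(P\ref{P4}), to a short consistent $S$- or $T$-path through a leftover $b$: a $TB$-edge $tb$ extends forward to $tbt'$; a $BS$-edge $bs$ extends forward to $s'bs$; a $TS$-edge $ts$ extends forward to $s'b''ts$ through any unused $b''\in B$.

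\textbf{Stage 2: the cycle split and the bridges.} Since $\sigma(C)<\eps_4 n$, long runs in $C$ are abundant, and Proposition~\ref{prop:goodpaths} supplies them at the positions we need. In Case~1 of the lemma I would use option~(i) with $k$ chosen so that the piece $P_T$ between the two bridges has exactly $t^*$ vertices; in Case~2 I would fall back to option~(ii), which produces four equally spaced bridge positions with $|P_S|,|P_S'|,|P_T|,|P_T'|\ge n/8$ and offers additional slack for matching orientations. Because each bridge sits inside a long run, its orientation in $C$ (forward or backward) is flexible by our choice of position within the long run. A forward $ST$-bridge is realised in $G$ as $s\to b\to t$ with $b\in B$ unused by $\cP$; a forward $TS$-bridge as $t\to a\to s$ with $a\in A$ unused by $\cP$ (both all-typical, with interior in $V(G)\setminus T$). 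Backward bridges, when forced by the cycle's orientation, are realised using either the $TS$-edge of $M$ guaranteed by Proposition~\ref{prop:dedges} (when $e(T,S)>0$) combined with (P\ref{P2}) to supply an $S$-interior vertex, or via a suitably reserved $A\cup B$ vertex as above, reversing the role of forward/backward.

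\textbf{Main obstacle.} The principal difficulty is the simultaneous bookkeeping. We must achieve $|T_\cP|=t^*$ exactly at the same moment $|P_T|=t^*$ (or $|P_T|+|P_T'|=t^*$ in Case~2), match the orientation of each $R_i$ in $C$ to the $ST$/$TS$ requirement on $R_i^G$, and keep the paths in $\cP$ disjoint from the $R_i^G$ and from the $A\cup B$ vertices reserved as bridge interiors. The two-case structure of the lemma is precisely what makes the matching feasible: Case~1 handles most situations via the one-parameter flexibility in choosing $k$, while Case~2 provides the additional degrees of freedom (four bridges instead of two) needed to reconcile the target $t^*$ with the forced bridge orientations in the remaining configurations.
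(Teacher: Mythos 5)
Your Stage~1 is essentially the paper's: pair up $a$ vertices of $A$ with $a$ vertices of $B$ via consistent length-four $S$- and $T$-paths of the form $SBTAS$ / $TASBT$, and handle the surplus $d=b-a$ vertices of $B$ using the edges from Proposition~\ref{prop:dedges} extended to consistent paths of length at most six. So far so good.

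The problem is in Stage~2, and it is not a presentational gap but a genuine error. You assert that ``because each bridge sits inside a long run, its orientation in $C$ (forward or backward) is flexible by our choice of position within the long run.'' This is false: a long run is by definition a \emph{consistently oriented} path of length~$20$, so every subpath of it is oriented in the same direction as the long run itself. Shifting $R_i$ within a long run buys you a few units of slack in $|P_T|$ (needed to absorb the small change in $|T_\cP|$ when a path of $\cP_0$ meeting an $R_i^G$ is deleted), but it does not let you choose between forward and backward. Proposition~\ref{prop:goodpaths}(i) controls the orientation of $P_1$ (forward) but not of $P_2$; $R_2\subseteq P_2$ therefore inherits whichever orientation $P_2$ happens to have. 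This uncontrolled orientation is the entire engine of the case analysis in the actual proof, and it is why the four-bridge alternative exists at all. Your proposal simultaneously acknowledges that matching orientations is the obstacle and claims the obstacle does not exist, which makes the plan circular.

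Once you accept that the orientation of $R_2$ is forced, you must produce an $ST$-path $R_1^G$ with a forward orientation (interior in $B$, or in $S$ via an $ST$-edge) and a $TS$-path $R_2^G$ whose orientation matches $P_2$, and you must do this without disturbing the balance $|T_\cP|=t^*$. When $P_2$ is backward, $R_2^G$ must be of the form $TBS$, so both bridge interiors consume $B$-vertices. If $b=a$ (no spare $B$-vertices) or $a\le 1$ (not enough of $A\cup B$ at all), you cannot simply ``reserve'' such vertices; the paper instead produces $ST$- and $TS$-edges directly (Proposition~\ref{prop:2edges} when $a\le 1$, and a bespoke counting argument when $2\le a\le k$) so the bridge interiors live in $S$. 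And when $a\ge \max\{2,k\}$ and the orientations of the candidate long runs at distances $\lfloor n/4\rfloor$ refuse to cooperate (i.e. $\{P_i',P_{i+2}'\}$ is inconsistent for both $i=1,2$), the only escape is the four-bridge decomposition of alternative~(ii), realised by cannibalising two of the $L_i$-pairs from $\cP_0$ to obtain one $A$- and one $B$-interior per pair of bridges. None of this is addressed in your outline, and the bookkeeping you identify as the ``main obstacle'' is never actually carried out.
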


\begin{proof}
Let $d:=b-a$ and $k:=t-s$.

We first obtain a good path system $\cP_0$ covering $A\cup B$ as follows. Apply Proposition~\ref{prop:dedges} to obtain a collection $M_0$ of $d+1$ edges as described in the proposition. Choose $M\subseteq M_0$ of size $d$ such that $M$ contains a $TS$-edge if $d\geq 1$ and $e(T,S)>0$. We use each edge $e\in M$ together with properties (P\ref{P2}), (P\ref{P4}) and (P\ref{P7}) to cover one vertex in $B$ by a consistently oriented path of length at most six as follows. If $e\in E(T,B)$ and $e$ is disjoint from all other edges in $M$, find a consistently oriented path of the form $TBT$ using $e$. If $e\in E(B,S)$ and $e$ is disjoint from all other edges in $M$, find a consistently oriented path of the form $SBS$ using $e$. If $e\in E(T,S)$, we note that (P\ref{P2}), (P\ref{P4}) and (P\ref{P7}) allows us to find a consistently oriented path of length three between any vertex in $B$ and any vertex in $T$. So we can find a consistently oriented path of the form $SB(T)^3S$ which uses $e$. Finally, if $e\in E(T,B)$ and shares an endvertex with another edge $e'\in M\cap E(B,S)$ we find a consistently oriented path of the form $SB(T)^3BS$ using $e$ and $e'$. This path uses two edges in $M$ but covers two vertices in $B$. Since we have many choices for each such path, we can choose them to be disjoint, so $M$ allows us to find a good path system $\cP_1$ covering $d$ vertices in $B$.

Label the vertices in $A$ by $a_1, a_2, \dots, a_a$ and the remaining vertices in $B$ by $b_1, b_2, \dots, b_a$. We now use (P\ref{P5})--(P\ref{P7}) to find a consistently oriented $S$- or $T$-path $L_i$ covering each pair $a_i, b_i$. If $1\leq i \leq \lceil(4a+k)/8\rceil$, cover the pair $a_i,b_i$ by a path of the form $SBTAS$. If $\lceil(4a+k)/8\rceil<i \leq a$ cover the pair $a_i,b_i$ by a path of the form $TASBT$.%
\COMMENT{bound $\lceil(4a+k)/8\rceil$ used in Case~2.3.} 
Let $\cP_2:=\bigcup_{i=1}^a L_i$.

We are able to choose all of these paths so that they are disjoint and thus obtain a good path system $\cP_0:=\cP_1\cup \cP_2$ covering $A\cup B$. Let $A_{\cP_0}, B_{\cP_0}, S_{\cP_0}, T_{\cP_0}$ be the $\cP_0$-partition of $V(G)$ and let $t':=|T_{\cP_0}|$, $s':=|S_{\cP_0}|$.

By Proposition~{\ref{prop:goodpaths}}(i), we can enumerate the vertices of $C$ so that there are long runs $P_1, P_2$ such that $P_1$ is a forward path and $d_C(P_1, P_2) = t'$. We will find consistently oriented $ST$- and $TS$-paths for $R_1^G$ and $R_2^G$ which depend on the orientation of $P_2$. The paths $R_1$ and $R_2$ will be consistently oriented subpaths of $P_1$ and $P_2$ respectively, whose position will be chosen later.

\medskip

\noindent \textbf{Case 1: } \emph{$b\geq a+2$.}

Suppose first that $P_2$ is a backward path. If $\cP_1$ contains a path of the form $SB(T)^3BS$, let $b_0$ and $b_0'$ be the two vertices in $B$ on this path. Otherwise, let $b_0$ and $b_0'$ be arbitrary vertices in $B$ which are covered by $\cP_1$. Use (P\ref{P7}) to find a forward path for $R_1^G$ which is of the form $S\{b_0\}T$. We also find a backward path of the form $T\{b_0'\}S$ for $R_2^G$. We choose the paths $R_1^G$ and $R_2^G$ to be disjoint from all paths in $\cP_0$ which do not contain $b_0$ or $b_0'$.

Suppose now that $P_2$ is a forward path. If $a\geq 1$, consider the path $L_1\in \cP_2$ covering $a_1\in A$ and $b_1\in B$. Find forward paths of the form $S\{b_1\}T$ for $R_1^G$ and $T\{a_1\}S$ for $R_2^G$, using (P\ref{P6}) and (P\ref{P7}), which are disjoint from all paths in $\cP_0\setminus\{L_1\}$. Finally, we consider the case when $a=0$. Recall that $e(T,S)>0$ by Proposition~\ref{prop:2edges}(ii) and so $M$ contains a $TS$-edge. Hence there is a path $P'$ in $\cP_1$ of the form $SB(T)^3S$, covering a vertex $b_0 \in B$ and an edge $t_1s_1\in E(T,S)$, say. We use (P\ref{P2}) and (P\ref{P7}) to find forward paths of the form $S\{b_0\}T$ for $R_1^G$ and $\{t_1\}\{s_1\}S$ for $R_2^G$ which are disjoint from all paths in $\cP_0\setminus\{P'\}$.

Obtain the good path system $\cP$ from $\cP_0$ by removing all paths meeting $R_1^G$ or $R_2^G$. Let $A_{\cP}, B_{\cP}, S_{\cP}, T_{\cP}$ be the $\cP$-partition of $V(G)$ and $t^*:=|T_{\cP}|$. The only vertices which could have moved to obtain $T_{\cP}$ from $T_{\cP_0}$ are interior vertices on the paths in $\cP_0\setminus \cP$, so $|t^*-t'| \leq 2\cdot 5=10$. Thus we can choose $R_1$ and $R_2$ to be subpaths of length two of $P_1$ and $P_2$ so that $|P_T|=t^*$, where $P_S$ and $P_T$ are defined by $C=(P_SR_1P_TR_2)$.

\medskip

\noindent \textbf{Case 2: } \emph{$b\leq a+1$.}

\medskip

\noindent \textbf{Case 2.1: } \emph{$a\leq 1$.}

If $a=b$, by Proposition~\ref{prop:2edges}(\ref{prop:2edges1}) we can find disjoint $e_1, e_2\in E(S,T)$ and disjoint $e_3 \in E(S,T)$, $e_4 \in E(T,S)$. Note that $\cP_0=\cP_2$, since $a=b$, so we may assume that all paths in $\cP_0$ are disjoint from $e_1,e_2,e_3, e_4$. If $P_2$ is a forward path, find a forward path of the form $SST$ for $R_1^G$ using $e_3$ and a forward path of the form $TSS$ for $R_2^G$ using $e_4$. If $P_2$ is a backward path, find a forward path of the form $SST$ for $R_1^G$ using $e_1$ and a backward path of the form $TSS$ for $R_2^G$ using $e_2$. In both cases, we choose $R_1^G$ and $R_2^G$ to be disjoint from all paths in $\cP_0$.

If $b=a+1$, note that there exist $e_1\in E(S,T)$ and $e_2\in E(T,S)$. (To see this, use that $\delta^0(G) \geq n/2$ and the fact that (P\ref{P6}) and (P\ref{P7})  imply that $|\{x\in S: N^+_A(x), N^-_B(x)=\emptyset\}|\geq n/4$.)%
\COMMENT{We have $a=0, b=1$ or $a=1, b=2$. Let $S':=\{x\in S: N^+_A(x), N^-_B(x)=\emptyset\}$. By (P\ref{P6}) and (P\ref{P7}), $|S'|\geq n/4$. Note that $a+s \leq (n-1)/2$ so each vertex in $S'$ has at least two inneighbours in $T$. This gives disjoint $TS$-edges $e_1$ and $e_1'$. We also note that $b+s \leq (n+1)/2$ so each vertex in $S'$ must have at least one outneighbour in $T$. Choose a vertex $x$ in $S'$ which does not lie on either $e_1$ or $e_1'$ and let $e_2$ be an $ST$-edge using $x$. At least one of $e_1, e_1'$ must be disjoint from $e_2$.}
 We may assume that all paths in $\cP_2$ are disjoint from $e_1,e_2$. Let $b_0\in B$ be the vertex covered by the single path in $\cP_1$. Find a forward path of the form $S\{b_0\}T$ for $R_1^G$, using (P\ref{P7}). Find a consistently oriented path of the form $TSS$ for $R_2^G$ which uses $e_1$ if $P_2$ is a backward path and $e_2$ if $P_2$ is a forward path. Choose the paths $R_1^G$ and $R_2^G$ to be disjoint from the paths in $\cP_0\setminus \cP_1=\cP_2$.

In both cases, we obtain the good path system $\cP$ from $\cP_0$ by removing at most one path which meets $R_1^G$ or $R_2^G$. Let $A_{\cP}, B_{\cP}, S_{\cP}, T_{\cP}$ be the $\cP$-partition of $V(G)$ and let $t^*:=|T_{\cP}|$. The only vertices which could have moved to obtain $T_{\cP}$ from $T_{\cP_0}$ are interior vertices on the path in $\cP_0\setminus \cP$ if $\cP_0\neq \cP$, so $|t^*-t'| \leq 5$. So we can choose subpaths $R_i$ of $P_i$ so that $|P_T|=t^*$, where $P_S$ and $P_T$ are defined by $C=(P_SR_1P_TR_2)$.

\medskip

\noindent \textbf{Case 2.2: } \emph{$2\leq a \leq k$.}

If $P_2$ is a forward path, consider $a_1\in A$ and $b_1\in B$ which were covered by the path $L_1\in \cP_0$. Use (P\ref{P6}) and (P\ref{P7}) to find forward paths, disjoint from all paths in $\cP_0\setminus\{L_1\}$, of the form $S\{b_1\}T$ and $T\{a_1\}S$ for $R_1^G$ and $R_2^G$ respectively.

Suppose now that $P_2$ is a backward path. We claim that $G$ contains $2-d$ disjoint $ST$-edges. Indeed, suppose not. Then $d^+_T(x) \leq 1-d$ for all but at most one vertex in $S$. Note that $b+s = (n-k+d)/2$, so $d^+_{A\cup T}(x)\geq (k-d)/2+1$ for all $x\in S$. So
$$e(S,A) \geq (s-1)((k-d)/2+1-(1-d))=(s-1)(k+d)/2 \geq nk/8\geq na/8.$$
Hence, there is a vertex $x \in A$ with $d^-_S(x) \geq n/8$, contradicting (P\ref{P6}). Let $E=\{e_i:1\leq i \leq 2-d\}$ be a set of $2-d$ disjoint $ST$-edges. We may assume that $\cP_2$ is disjoint from $E$.

If $a=b$, use (P\ref{P2}) to find a forward path of the form $SST$ using $e_1$ for $R_1^G$ and a backward path of the form $TSS$ using $e_2$ for $R_2$. If $b=a+1$, let $b_0\in B$ be the vertex covered by the single path in $\cP_1$. Use (P\ref{P2}) and (P\ref{P7}) to find a forward path of the form $S\{b_0\}T$ for $R_1^G$ and a backward path of the form $TSS$ using $e_1$ for $R_2^G$. We choose the paths $R_1^G$ and $R_2^G$ to be disjoint from all paths in $\cP_2$.

In both cases, we obtain the good path system $\cP$ from $\cP_0$ by removing at most one path which meets $R_1^G$ or $R_2^G$. Let $A_{\cP}, B_{\cP}, S_{\cP}, T_{\cP}$ be the $\cP$-partition of $V(G)$ and $t^*:=|T_{\cP}|$. The only vertices which could have moved to obtain $T_{\cP}$ from $T_{\cP_0}$ are interior vertices on the path in $\cP_0\setminus \cP$ if $\cP_0\neq \cP$, so $|t^*-t'| \leq 5$. Thus we can choose $R_1$ and $R_2$ to be subpaths of length two of $P_1$ and $P_2$ so that $|P_T|=t^*$, where $P_S$ and $P_T$ are defined by $C=(P_SR_1P_TR_2)$.

\medskip

\noindent \textbf{Case 2.3: } \emph{$a \geq 2,k$.}

We note that
\begin{align*}
t'-s'&=|(T\cup \textrm{Int}_T(\cP_0))\setminus \textrm{Int}_S(\cP_0)|-|(S\cup \textrm{Int}_S(\cP_0))\setminus\textrm{Int}_T(\cP_0)|\\
&= |(T\cup \textrm{Int}_T(\cP_2))\setminus \textrm{Int}_S(\cP_2)|-|(S\cup \textrm{Int}_S(\cP_2))\setminus\textrm{Int}_T(\cP_2)|+c\\
&=(t+3a-4\lceil (4a+k)/8\rceil)-(s+4\lceil (4a+k)/8\rceil-a) +c\\
&=4a+k-8\lceil (4a+k)/8\rceil +c
\end{align*}
where $-7\leq c\leq 1$ is a constant representing the contribution of interior vertices on the path in $\cP_1$ if $b=a+1$ and $c=0$ if $b=a$.%
\COMMENT{$c=(|\textrm{Int}_T(\cP_1)\setminus T|-|\textrm{Int}_S(\cP_1) \cap T|)-(|\textrm{Int}_S(\cP_1)\setminus S|- |\textrm{Int}_T(\cP_1)\cap S|)$. If $\cP_1=\emptyset$, $c=0$. If $\cP_1$ consists of a path of form $TBT$, $c=1$. Path of form $SBS$, $c=-1$. Path of form $SB(T)^3S$, $c=-3-4=-7$.}
 In particular, this implies that $|t'-s'| \leq 15$%
\COMMENT{$-15=4a+k-8((4a+k)/8+1)-7 \leq t'-s'\leq 4a+k-8(4a+k)/8+1=1$} 
and
$$(n-15)/2 \leq s',t' \leq (n+15)/2.$$

Apply Proposition~\ref{prop:goodpaths}(ii) to find long runs $P'_1, P'_2, P'_3, P'_4$ such that $d_C(P'_i, P'_{i+1})=\lfloor n/4 \rfloor$ for $i=1,2,3$. Let $x_i$ be the initial vertex of each $P'_i$. If $\{P'_i, P'_{i+2}\}$ is consistent for some $i\in \{1,2\}$, consider $a_1 \in A$, $b_1 \in B$ which which were covered by the path $L_1\in \cP_0$. If $P'_i, P'_{i+2}$ are both forward paths, let $R_1^G$ and $R_2^G$ be forward paths of the form $S\{b_1\}T$ and $T\{a_1\}S$ respectively. If $P'_i, P'_{i+2}$ are both backward paths, let $R_1^G$ and $R_2^G$ be backward paths of the form $S\{a_1\}T$ and $T\{b_1\}S$ respectively. Choose the paths $R_1^G$ and $R_2^G$ to be disjoint from the paths in $\cP:=\cP_0\setminus \{L_1\}$. Let $A_{\cP}, B_{\cP}, S_{\cP}, T_{\cP}$ be the $\cP$-partition of $V(G)$ and let $t^*=|T_{\cP}|$. The only vertices which could have been added or removed to obtain $T_{\cP}$ from $T_{\cP_0}$ are interior vertices on $L_1$ so $(n-15)/2-3 \leq t^* \leq (n+15)/2+3$. Then we can choose $R_1$ and $R_2$ to be subpaths of length two of $P'_i$ and $P'_{i+2}$ so that $|P_T|=t^*$, where $P_S, P_T$ are defined so that $C=(P_SR_1P_TR_2)$.

So let us assume that $\{P'_i, P'_{i+2}\}$ is not consistent for $i=1,2$. We may assume that the paths $P'_1$ and $P'_4$ are both forward paths, by relabelling if necessary, and we illustrate the situation in Figure~\ref{fig:goodcoll}.

\begin{figure}[h]
\centering
\includegraphics[scale=0.33]{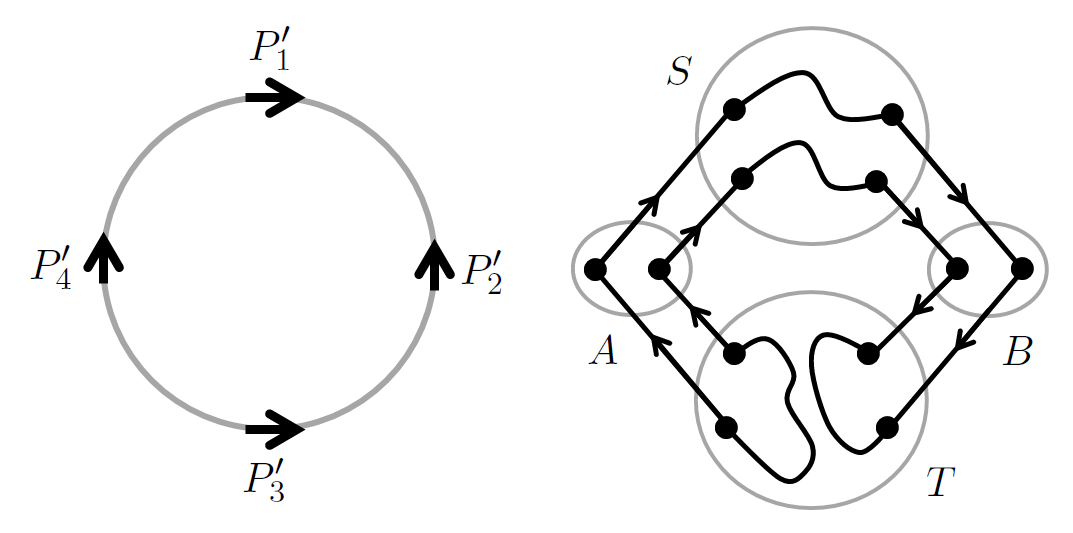}
\caption{A good collection of long runs.}\label{fig:goodcoll}
\end{figure}

Consider the vertices $a_i \in A$ and $b_i \in B$ covered by the paths $L_i\in\cP_0$ for $i=1,2$. Let $\cP:=\cP_0\setminus \{L_1, L_2\}$ and let $A_{\cP}, B_{\cP}, S_{\cP}, T_{\cP}$ be the $\cP$-partition of $V(G)$. Let $t^*:=|T_{\cP}|$. The only vertices which can have been added or removed to obtain $T_{\cP}$ from $T_{\cP_0}$ are interior vertices on the paths $L_1$ and $L_2$, so $(n-15)/2-6 \leq t^* \leq (n+15)/2+6$. Find a forward path of the form $S\{b_1\}T$ for $R_1^G$. Then find backward paths of the form $T\{b_2\}S$ and $S\{a_1\}T$ for $R_2^G$ and $R_3^G$ respectively. Finally, find a forward path of the form $T\{a_2\}S$ for $R_4^G$. We can choose the paths $R_i^G$ to be disjoint from all paths in $\cP$. Since $P'_1$ and $P'_2$ are of length $20$ we are able to find subpaths $R_1, R_2, R_3, R_4$ of $P'_1, P'_2, P'_3, P'_4$ so that $|P_T|+|P_T'|=t^*$, where $P_S, P_S', P_T, P_T'$ are defined so that $C=(P_SR_1P_TR_2P_S'R_3P_T'R_4)$.
\end{proof}

In order to prove Lemma~\ref{lem:ST} in the case when $\sigma(C)<\eps_4n$, we first apply Lemma~\ref{lem:linking2} to $G$. We now proceed similarly as in the case when $C$ has many sink vertices (see Proposition~\ref{prop:pathembedding}) and so we only provide a sketch of the argument. We first observe that any subpath of the cycle of length $100\eps_4n$ must contain at least
\begin{equation}\label{eqn:STlongruns}
\lfloor100\eps_4n/21\rfloor-2\eps_4n>2\eps_3n\geq a+b \geq |\cP|
\end{equation}
disjoint long runs.  Let $s_1$ be the image of the initial vertex of $P_S$. Let $P_S^*$ be the subpath of $P_S$ formed by the first $100\eps_4n$ edges of $P_S$. We can cover all $S$-paths in $\cP$ and all vertices $x\in S$ which satisfy $d^+_S(x)< n/2-\eps_3 n$ or $d^-_S(x)< n/2-\eps_3 n$ greedily by a path in $G$ starting from $s_1$ which is isomorphic to $P_S^*$. Note that \eqref{eqn:STlongruns} ensures that $P_S^*$ contains $|\cP|$ disjoint long runs. So we can map the $S$-paths in $\cP$ to subpaths of these long runs. Let $P_S''$ be the path formed by removing from $P_S$ all edges in $P_S^*$.

If Lemma~\ref{lem:linking2}(i) holds and thus $P_S$ is the only path to be embedded in $G[S]$, we apply Proposition~\ref{prop:completepath}(i) to find a copy of $P_S''$ in $G[S]$, with the desired endvertices. If Lemma~\ref{lem:linking2}(ii) holds, we must find copies of both $P_S$ and $P_S'$ in $G[S]$. So we split the graph into two subgraphs of the appropriate size before applying Proposition~\ref{prop:completepath}(i) to each.%
\COMMENT{Suppose (ii) (the proof for (i) is the same, without the initial subdivision step). Let $r_i$ be the endpoints of $R_i^G$ in $S$ for $i=1,2,3$ and let $r_4$ be the endpoint of the copy of $P_S^*$. Let $m_1:=|P_S''|$ and $m_2:=|P_S'|$. Arbitrarily partition the vertices $(S\setminus (V(P_S^*)) )\cup \{r_1, r_2, r_3, r_4\}$ into sets $M_1$ and $M_2$ so that $|M_1|=m_1$, $|M_2|=m_2$, $r_1,r_4\in M_1$ and $r_2,r_3\in M_2$. Consider the graphs $G_i:=G[M_i]$ for $i=1,2$. We have that $\delta^0(G_i)\geq m_i-\eps_3 n-100\eps_4n\geq 7m_i/8$, since $m_i \geq n/9$. Thus Proposition~\ref{prop:completepath}(i) implies that $G_1$ has a Hamilton path isomorphic to $P_S''$ from $r_4$ to $r_1$ and $G_2$ has a Hamilton path isomorphic to $P_S'$ from $r_2$ to $r_3$.}
 We do the same to find copies of $P_T$ (or $P_T$ and $P_T'$) in $G[T]$. Thus, we obtain a copy of $C$ in $G$.
This completes the proof of Lemma~\ref{lem:ST}.


\section{$G$ is $AB$-extremal}\label{sec:AB}

The aim of this section is to prove the following lemma which shows that Theorem~\ref{thm:main} is satisfied when $G$ is $AB$-extremal. Recall that an $AB$-extremal graph closely resembles a complete bipartite graph. We will proceed as follows. First we will find a short path which covers all of the exceptional vertices (the vertices in $S\cup T$). It is important that this path leaves a balanced number of vertices uncovered in $A$ and $B$. We will then apply Proposition~\ref{prop:completepath} to the remaining, almost complete, balanced bipartite graph to embed the remainder of the cycle.

\begin{lemma}\label{lem:AB}
Suppose that $1/n \ll \eps_3 \ll 1.$
Let $G$ be a digraph on $n$ vertices with $\delta^0(G)\geq n/2$ and assume that $G$ is $AB$-extremal. If $C$ is any orientation of a cycle on $n$ vertices which is not antidirected, then $G$ contains a copy of $C$.
\end{lemma}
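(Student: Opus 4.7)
The plan follows the outline in the paper's sketch. I would find a short path $P_0$ in $G$ that covers every exceptional vertex (every vertex of $S\cup T$) and uses just enough vertices of $A\cup B$ so that, writing $A' := A\setminus V(P_0)$ and $B' := B\setminus V(P_0)$, the bipartite digraph induced on $(A',B')$ is almost complete with $|A'|=|B'|+1$ and both endvertices of the embedded $P_0$ lying in $A'$. I would then view $P_0$ as the image of a short sub-path of $C$ under the desired embedding and apply Proposition~\ref{prop:completepath}(ii) to embed the complementary sub-path of $C$ as a Hamilton path of the appropriate orientation between these two endvertices in $G[A',B']$. Properties (Q\ref{Q3})--(Q\ref{Q5}), together with $|V(P_0)|=O(1)$, will give the required semi-degree bound $\delta^0(G[A',B'])\geq 7|A'|/8$.

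To construct $P_0$, I would split on the number of sink vertices $\sigma(C)$, mirroring Section~\ref{sec:ST}. When $\sigma(C)\geq \eps_4 n$, sinks and sources of $C$ are abundant and can be mapped into $S\cup T$: by (Q\ref{Q6}), a vertex $v\in S$ has at least $n/50$ in-neighbours in $A$ and $n/50$ out-neighbours in $B$, so a sink of $C$ can be embedded at $v$ with its two $C$-neighbours placed in $A$; property (Q\ref{Q7}) handles sources of $C$ at vertices of $T$ symmetrically. One greedily covers all of $S\cup T$ by pairwise internally disjoint short sub-paths of $C$ of this form and then glues them along a short interval of $C$ into $P_0$. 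When $\sigma(C)<\eps_4 n$, $C$ has many long consistently oriented runs; each exceptional vertex can be handled by a short detour inside a single run: a forward run of length three becomes an embedded forward path of the form $A\,S\,B$ covering a vertex of $S$, or of the form $B\,T\,A$ covering a vertex of $T$, with backward runs handled symmetrically.

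The main obstacle is the balancing step, and this is precisely where the hypothesis that $C$ is not antidirected enters. After the naive coverage of $S\cup T$, the difference $|A'|-|B'|$ will typically differ from the target value $1$ by some bounded integer, and the endvertices of $P_0$ need not lie in the larger side. Because $C$ is not antidirected, it contains at least one consistently oriented sub-path of length two; by locally rerouting $P_0$ through this sub-path (using two consecutive $A$-$B$ edges in the ``wrong'' alternation, or via an exceptional vertex) one can shift the balance by one unit and, independently, swap which side hosts an endvertex. A bounded number of such local adjustments forces $|A'|=|B'|+1$ with both endpoints of $P_0$ in $A'$. An antidirected $C$ admits no such local adjustment, since every consistently oriented piece of length two is forbidden in $C$; this is why the graphs $F_{2m}^1$, $F_{2m}^2$ of Figure~\ref{fig:cai} are extremal only for the antidirected case.

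The sub-case $a<b$ demands particular care: by (Q\ref{Q8}), $B$ is then almost independent in the underlying graph of $G$, so $B$-$B$ edges are scarce and the balancing adjustments cannot route through $B$. One is forced to perform the adjustments using the $S\cup T$ vertices themselves (which by (Q\ref{Q6}), (Q\ref{Q7}) have rich connectivity to both $A$ and $B$) or through the few vertices of $A$ violating (Q\ref{Q3}). I expect this sub-case to be the most technical part of the proof and the place where the extremal bound $\delta^0(G)\geq n/2$ is genuinely tight.
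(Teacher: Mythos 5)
Your high-level architecture is the same as the paper's: find a short ``exceptional cover'' $P$ that contains all of $S\cup T$, has both endvertices in $A$, and leaves $|B\setminus V(P)|=|A\setminus V(P)|+1$; then finish with Proposition~\ref{prop:completepath}(ii). The split on $\sigma(C)$ and the idea of mapping sinks of $C$ to $S$ (using (Q\ref{Q6})) and sources to $T$ (using (Q\ref{Q7})) when $\sigma(C)$ is large, and long consistently oriented runs when $\sigma(C)$ is small, also matches the paper (Proposition~\ref{prop:excover1} and Lemma~\ref{lem:excover2}). So the skeleton is sound.

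The genuine gap is the balancing step, and in particular your reading of (Q\ref{Q8}) is exactly backwards. When $b=a+d$ with $d>0$ you need $P$ to ``overuse'' $B$ relative to $A$ by $d$: in the paper's bookkeeping of equation~\eqref{eqn:repeats}, you need $\text{rep}(B)-\text{rep}(A)=d$, so you must route through $B$ more often, not less. The paper achieves this via Proposition~\ref{prop:balance}, which produces a matching of size $d+2$ inside $E(B\cup T,B)$; each such edge is absorbed as a consistently oriented path of the form $ABB$ or $BTB$, creating a repeated $B$. Crucially, (Q\ref{Q8}) is not an obstruction to finding these $B$-$B$ and $T$-$B$ edges — it is the hypothesis that \emph{makes the proof of Proposition~\ref{prop:balance} work}: since $a+s\le(n-d)/2$ and $\delta^0(G)\ge n/2$, every vertex of $B$ has at least $d/2$ in-neighbours in $B\cup T$, so if a maximal matching in $E(B\cup T,B)$ had size $\le d+1$ then some vertex of $V(M)\subseteq B\cup T$ would have $d^+_B\ge n/20$, contradicting (Q\ref{Q8}). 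Your proposed workaround — routing the balancing adjustments through $S\cup T$ or through the degenerate vertices of $A$ — does not help: $S\cup T$ is already fully consumed by (EC\ref{EC1}), and routing through $A$ produces repeated $A$s, which worsens the imbalance. Without Proposition~\ref{prop:balance} (or something equivalent), the $a<b$ case of the exceptional cover does not go through. You would also want to make the ``local rerouting'' idea precise via the repeated-$A$/repeated-$B$ accounting of~\eqref{eqn:repeats}; as stated it is too vague to verify that (EC\ref{EC3}) can actually be enforced while also keeping both endvertices in $A$.
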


If $b>a$, the next lemma implies that $E(B\cup T, B)$ contains a matching of size $b-a+2$. We can use $b-a$ of these edges to pass between vertices in $B$ whilst avoiding $A$ allowing us to correct the imbalance in the sizes of $A$ and $B$.

\begin{prop}\label{prop:balance}
Suppose $1/n \ll\eps_3 \ll 1$.
Let $G$ be a digraph on $n$ vertices with $\delta^0(G) \geq n/2$. Suppose $A, B, S, T$ is a partition of $V(G)$ satisfying (Q\ref{Q*})--(Q\ref{Q8}) and $b=a+d$ for some $d>0$. Then there is a matching of size $d+2$ in $E(B\cup T,B)$.
\end{prop}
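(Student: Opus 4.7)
The plan is to transfer the digraph matching problem to an undirected matching problem on an auxiliary graph. Define $G'$ on vertex set $B\cup T$ by placing the edge $\{u,v\}$ whenever at least one of $uv, vu$ lies in $E(B\cup T,B)$; thus $T$ is independent in $G'$, and the edges of $G'$ are $TB$-edges (from $tb\in E(G)$) together with edges inside $B$ (from $bb'$ or $b'b\in E(G)$). Any matching in $G'$ lifts to a matching in $E(B\cup T,B)$ of the same size by choosing an orientation, so it suffices to prove $\nu(G')\geq d+2$.

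Two preliminary degree bounds drive everything. For $v\in B$, since $\delta^0(G)\geq n/2$ and $d^-_A(v)+d^-_S(v)\leq a+s$, the identity $n/2-a-s=(d+t-s)/2$ (which uses $a+b+s+t=n$ and $b=a+d$) combined with $s\leq t$ from (Q\ref{Q*}) gives $d^-_{B\cup T}(v)\geq d/2$. On the other hand, condition (Q\ref{Q8}) applies because $a<b$ and yields $d^+_B(u)<n/20$ for every $u\in B\cup T$.

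I would then apply Berge--Tutte to $G'$. Suppose for contradiction that $\nu(G')\leq d+1$, so there is $U\subseteq B\cup T$ with $o(G'-U)\geq |U|+b+t-2d-2$; comparing with $o(G'-U)\leq b+t-|U|$ forces $|U|\leq d+1$. Letting $c_1$ and $c_o$ denote the number of singleton components and of odd components of size $\geq 3$ in $G'-U$, the inequalities $c_1+3c_o\leq b+t-|U|$ and $c_1+c_o\geq |U|+b+t-2d-2$ give $c_o\leq d+1-|U|$ and then $c_1\geq 2|U|+b+t-3d-3$; by (Q\ref{Q1}) together with $d\leq 2\eps_3 n+1$, the set $W$ of $B$-singletons in $G'-U$ satisfies $|W|\geq b-3d-3\geq n/3$ for large $n$.

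The endgame is a double count. Each $v\in W$ has $N_{G'}(v)\subseteq U$, so $d^-_{B\cup T}(v)\leq |U|$; counting the in-edges from $B\cup T$ into $W$ by heads and then by tails gives
\[\tfrac{d}{2}\,|W|\;\leq\;\sum_{v\in W}d^-_{B\cup T}(v)\;=\;\sum_{u\in U}|N^+_B(u)\cap W|\;\leq\;|U|\cdot\tfrac{n}{20}.\]
Since $|W|\geq n/3$ and $|U|\leq d+1$, this forces $nd/6\leq(d+1)n/20$, hence $7d\leq 3$, contradicting $d\geq 1$. So $\nu(G')\geq d+2$. The main obstacle is sidestepping the ``doubly-used $B$-vertex'' issue that plagues a naive bipartite formulation (with $L=B\cup T$, $R=B$), in which a vertex of $B$ could be matched as both a tail and a head; working directly in $G'$, where each $B$-vertex has a single identity, and closing with Berge--Tutte rather than König bypasses this cleanly.
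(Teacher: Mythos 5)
Your proof is correct, but it takes a more elaborate route than the paper's. You pass to an auxiliary undirected graph $G'$ on $B\cup T$ and invoke the Berge--Tutte deficiency formula to extract a small Tutte set $U$ (of size at most $d+1$) and a large collection $W$ of $B$-vertices isolated in $G'-U$, closing with the double count $\frac{d}{2}|W| \le \sum_{v\in W} d^-_{B\cup T}(v) \le |U|\cdot\frac{n}{20}$. The paper instead works directly with a maximal matching $M$ of vertex-disjoint edges in $E(B\cup T,B)$ and supposes $|M|\le d+1$: maximality immediately forces every $v\in B\setminus V(M)$ to have all $\ge d/2$ of its $B\cup T$-inneighbours inside $V(M)$, and averaging over the $2|M|\le 2(d+1)$ vertices of $V(M)$ produces some $u\in B\cup T$ with $d^+_B(u)\ge n/20$, contradicting (Q\ref{Q8}). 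The core double count is essentially the same in both arguments --- the two degree facts $d^-_{B\cup T}(v)\ge d/2$ for $v\in B$ (from $\delta^0(G)\ge n/2$, $b=a+d$ and $s\le t$) and $d^+_B(u)< n/20$ for all $u\in B\cup T$ (from (Q\ref{Q8})) drive the contradiction --- but the paper's maximal-matching version is shorter and avoids Berge--Tutte entirely, since a maximal matching already hands you the small absorbing set ($V(M)$) and the large set of uncovered $B$-vertices without any auxiliary-graph bookkeeping. Your observation that a naive bipartite K\"onig formulation with $L=B\cup T$, $R=B$ would allow a $B$-vertex to be matched both as a tail and as a head is accurate, and your auxiliary graph $G'$ cleanly repairs it; the paper sidesteps that pitfall simply by never introducing a bipartite auxiliary graph in this proposition.
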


\begin{proof}
Consider a maximal matching $M$ in $E(B \cup T,B)$ and suppose that $|M|\leq d+1$. Since $a+s \leq (n-d)/2$, each vertex in $B$ has at least $d/2$ inneighbours in $B\cup T$. In particular, since $M$ was maximal, each vertex in $B\setminus V(M)$ has at least $d/2$ inneighbours in $V(M)$. Then there is a $v \in V(M)\subseteq B\cup T$ with
$$d^+_B(v) \geq \frac{(b-2|M|)}{2|M|}\frac{d}{2}\geq \frac{n}{20},$$
contradicting (Q\ref{Q8}). Therefore $|M| \geq d+2$.
\end{proof}

We say that $P$ is an \emph{exceptional cover of $G$} if $P\subseteq G$ is a copy of a subpath of $C$ and
\begin{enumerate}[\rm{(EC}$1$)]
	\item $P$ covers $S\cup T$;\label{EC1}
	\item both endvertices of $P$ are in $A$;\label{EC2}
	\item $|A\setminus V(P)|+1=|B\setminus V(P)|$.\label{EC3}
\end{enumerate}

We will use the following notation when describing the form of a path. If $X,Y\in \{A,B\}$ then we write $X*Y$ for any path which alternates between $A$ and $B$ whose initial vertex lies in $X$ and final vertex lies in $Y$. For example, $A*A(ST)^2$ indicates any path of the form $ABAB\dots ASTST$.

Suppose that $P$ is of the form $Z_1Z_2\dots Z_m$, where $Z_i \in \{A,B,S,T\}$. Let $Z_{i_1}, Z_{i_2},\dots, Z_{i_j}$ be the appearances of $A$ and $B$, where $i_j<i_{j+1}$.  If $Z_{i_j}=A=Z_{i_{j+1}}$, we say that $Z_{i_{j+1}}$ is a \emph{repeated $A$}. We define a \emph{repeated $B$} similarly. Let $\text{rep}(A)$ and $\text{rep}(B)$ be the numbers of repeated $A$s and repeated $B$s, respectively. Suppose that $P$ has both endvertices in $A$ and $P$ uses $\ell+\text{rep}(B)$ vertices from $B$. Then $P$ will use $\ell+\text{rep}(A)+1$ vertices from $A$ (we add one because both endvertices of $P$ lie in $A$). So we have that
\begin{equation}\label{eqn:repeats}
|B\setminus V(P)|-|A\setminus V(P)|= b-a-\text{rep}(B)+\text{rep}(A)+1.
\end{equation}

Given a set of edges $M\subseteq E(G)$ we define the graph $G_M\subseteq G$ whose vertex set is $V(G)$ and whose edge set is $E(A,B\cup S)\cup E(B,A\cup T) \cup E(T,A) \cup E(S,B)\cup M\subseteq E(G)$. Informally, in addition to the edges of $M$, $G_M$ has edges between two vertex classes when the bipartite graph they induce in $G$ is dense.\label{G_M}

We will again split our argument into two cases depending on the number of sink vertices in~$C$.

\subsection{Finding an exceptional cover when $C$ has few sink vertices, $\sigma(C) <\eps_4n$}

It is relatively easy to find an exceptional cover when $C$ has few sink vertices by observing that $C$ must contain many disjoint consistently oriented paths of length three. We can use these consistently oriented paths to cover the vertices in $S\cup T$ by forward paths of the form $ASB$ or $BTA$, for example.

\begin{prop}\label{prop:excover1}
Suppose $1/n \ll \eps_3 \ll \eps_4 \ll 1$.
Let $G$ be a digraph on $n$ vertices with $\delta^0(G) \geq n/2$. Suppose $A, B, S, T$ is a partition of $V(G)$ satisfying (Q\ref{Q*})--(Q\ref{Q8}). If $\sigma(C) <\eps_4n$, then there is an exceptional cover of $G$ of length at most $21\eps_4n$.
\end{prop}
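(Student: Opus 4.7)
The plan is to build the exceptional cover $P$ as the image of a short subpath $Q$ of $C$ under a carefully chosen embedding into $G$. The key structural input is that $\sigma(C)<\eps_4 n$ forces $C$ to have at most $2\eps_4 n$ direction changes, so any sufficiently long subpath of $C$ contains many disjoint consistently oriented subpaths of length three, each of which is a natural slot for covering one vertex of $S\cup T$. Concretely, I would fix $L$ with $10\eps_4 n \leq L \leq 21\eps_4 n$ (of a parity to be chosen later) and take $Q$ to be any subpath of $C$ of length $L$; the maximal consistent runs in $Q$ of length at most two contribute at most $2(2\eps_4 n+1)$ to the total length, so the runs of length at least three contribute total length at least $L - 4\eps_4 n - 2 \geq 3(s+t)$, yielding at least $s+t$ disjoint consistent length-three subpaths inside $Q$.

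Next I would define the embedding. Designate $s+t$ of the consistent length-three subpaths as covers: each $S$-vertex $x$ is placed in the middle of such a cover using a forward pattern $A\to x\to B$ (or a backward $B\to x\to A$) with flanking vertices supplied by (Q\ref{Q6}), and each $T$-vertex analogously via (Q\ref{Q7}). The remaining positions of $Q$ are assigned vertices from $A\cup B$: within a long consistent run, the near-complete bipartite structure of $G[A,B]$ given by (Q\ref{Q3}) and (Q\ref{Q4}) lets us realise any $A$/$B$ alternation, and at any sink or source of $Q$ we may freely choose the class (from $\{A,B\}$) of the sink/source, which then forces its two neighbours to share the opposite class.

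To enforce (EC\ref{EC2}) and (EC\ref{EC3}), I would proceed as follows. The endpoints of $P$ are placed in $A$ by choosing the parity of $L$ and the starting position of $Q$ appropriately. For (EC\ref{EC3}), an in-flow/out-flow count shows that any $A$-to-$A$ walk using only the dense class transitions ($A\leftrightarrow B$ directly, or $A\to S\to B$, or $B\to T\to A$) satisfies $|A\cap V(P)|-|B\cap V(P)|=1$, so the natural embedding already matches (EC\ref{EC3}) in the case $d:=b-a=0$. When $d>0$ I would apply Proposition~\ref{prop:balance} to obtain a matching of $d+2$ edges in $E(B\cup T,B)$ and splice $d$ of them into the embedding as $B\to B$ or $T\to B$ transitions (using distinct $T$-vertices for $T\to B$ matching edges and reserving the remaining $T$-vertices for standard covers); the same flow analysis then gives $|A\cap V(P)|-|B\cap V(P)|=1-d$ exactly, since each such non-bipartite edge shifts the balance by precisely $-1$.

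The main obstacle is (EC\ref{EC3}) in the case $d>0$: the dense, bipartite-like transitions of $G$ rigidly force $|A|-|B|=1$ in any $A$-to-$A$ walk, and only the matching from Proposition~\ref{prop:balance} provides the non-bipartite edges needed to correct this. Coordinating the placement of these matching edges with the $S/T$ covers, the endpoint requirement, and the parity of $L$, while keeping the total length at most $21\eps_4 n$, requires some case analysis; however, all of the features to be fitted have total size $O(\eps_3 n)$, which is negligible against the $\Theta(\eps_4 n)$ length budget, so there is ample room.
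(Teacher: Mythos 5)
Your proposal follows essentially the same strategy as the paper's proof: a subpath of $C$ of length $\Theta(\eps_4 n)$ supplies many disjoint consistently oriented length-three slots, these slots are used to cover $S\cup T$ together with $d$ matching edges from Proposition~\ref{prop:balance}, and (EC\ref{EC3}) then follows from the same balance count -- the paper phrases it as counting ``repeated $B$s'' via identity~\eqref{eqn:repeats}, which is your flow argument. One small caveat: the paper first takes a \emph{consistent} (all-forward) collection of length-three runs, reversing the cycle labelling if necessary, so every $S$-cover is a forward $ASB$ path matching (Q\ref{Q6}); your ``backward $B\to x\to A$'' must accordingly be read as the vertex sequence of a backward path (edges $x\to B$ and $A\to x$, which (Q\ref{Q6}) supplies), not as literal edges $B\to x$ and $x\to A$, which it does not.
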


\begin{proof}
Let $d:=b-a$. Let $P$ be any subpath of $C$ of length $20\eps_4n$. Let $\cQ$ be a maximum consistent collection of disjoint paths of length three in $P$, such that $d_C(Q, Q')\geq 7$ for all distinct $Q, Q'\in \cQ$. Then
$$|\cQ| \geq (\lfloor 20\eps_4n/7 \rfloor - 2\eps_4n)/2 > 4\eps_3n>d+s+t.$$
If necessary, reverse the order of all vertices in $C$ so that the paths in $\cQ$ are forward paths.
Apply Proposition~\ref{prop:balance} to find a matching $M\subseteq E(B\cup T,B)$ of size $d$ and write $M=\{e_1, \dots, e_m, f_{m+1}, \dots, f_d\}$, where $e_i\in E(B)$ and $f_i\in E(T,B)$. Map the initial vertex of $P$ to any vertex in $A$. We will greedily find a copy of $P$ in $G_M$ which covers $M$ and $S\cup T$ as follows. 

Note that, by (Q\ref{Q7}), we can cover each edge $f_i\in M$ by a forward path of the form $BTB$. By (Q\ref{Q6}), each of the vertices in $S$ can be covered by a forward path of the form $ASB$. Similarly, (Q\ref{Q7}) allows us to find a forward path of the form $BTA$ covering each vertex in $T$. Moreover, note that (Q\ref{Q1})--(Q\ref{Q4}) allow us to find a path of length three of any orientation between any pair of vertices $x\in A$ and $y\in B$ using only edges from $E(A,B) \cup E(B,A)$. So we can find a copy of $P$ which covers every edge in $M$ and every vertex in $(S\cup T)\setminus V(M)$ by a copy of a path in $\cQ$ and which has the form
$$(A*BB)^m(A*BTB)^{d-m}(A*ASB)^s(A*BT)^{t-d+m}A*X,$$
where $X\in \{A,B\}$. We may assume that $X=A$ by extending the path $P$ by one vertex if necessary. Let $P^G$ denote this copy of $P$ in $G$.

Now (EC\ref{EC1}) and (EC\ref{EC2}) hold. It remains to check (EC\ref{EC3}). 
Observe that $P^G$ contains no repeated $A$s and exactly $d$ repeated $B$s, these occur in the subpath of $P^G$ of the form $(A*BB)^m(A*BTB)^{d-m}$. By (\ref{eqn:repeats}), we see that $$|B\setminus V(P^G)|-|A\setminus V(P^G)|= 1,$$ so (EC\ref{EC3}) is satisfied. Hence $P^G$ forms an exceptional cover.
\end{proof}

\subsection{Finding an exceptional cover when $C$ has many sink vertices, $\sigma(C) \geq\eps_4n$}

When $C$ is far from being consistently oriented, we use sink and source vertices to cover the vertices in $S\cup T$. A natural approach would be to try to cover the vertices in $S\cup T$ by paths of the form $ASA$ and $BTB$ whose central vertex is a sink or by paths of the form $ATA$ and $BSB$ whose central vertex is a source. In essence, this is what we will do, but there are some technical issues we will need to address. The most obvious is that each time we cover a vertex in $S$ or $T$ by a path of one of the above forms, we will introduce a repeated $A$ or a repeated $B$, so we will need to cover the exceptional vertices in a ``balanced'' way.

Let $P$ be a subpath of $C$ and let $m$ be the number of sink vertices in $P$. Suppose that $P_1,P_2,P_3$ is a partition of $P$ into internally disjoint paths such that $P=(P_1P_2P_3)$. We say that $P_1, P_2, P_3$ is a \emph{useful tripartition of $P$} if there exist $\cQ_i\subseteq V(P_i)$ such that:
\begin{itemize}
\item $P_1$ and $P_2$ have even length;
\item $|\cQ_i|\geq \lfloor m/12 \rfloor$ for $i=1,2,3$;
\item all vertices in $\cQ_1 \cup \cQ_3$ are sink vertices and are an even distance apart;
\item all vertices in $\cQ_2$ are source vertices and are an even distance apart.%
\end{itemize}
Note that a useful tripartition always exists. We say that $\cQ_1, \cQ_2, \cQ_3$ are \emph{sink/source/sink sets} for the tripartition $P_1, P_2, P_3$. We say that a subpath $L\subseteq P_2$ is a \emph{link} if $L$ has even length and, if, writing $x$ for the initial vertex and $y$ for the the final vertex of $L$, the paths $(P_2x)$ and $(yP_2)$ each contain at least $|\cQ_2|/3$ elements of $\cQ_2$.

\begin{prop}\label{prop:sinksource}
Let $1/n \ll \eps \ll \eta \ll \tau \leq 1$. Let $G$ be a digraph on $n$ vertices and let $A,B,S,T$ be a partition of $V(G)$. Let $S_A, S_B$ be disjoint subsets of $S$ and $T_A, T_B$ be disjoint subsets of $T$. Let $a:=|A|$, $b:=|B|$, $s_A:= |S_A|$, $s_B:=|S_B|$, $t_A:=|T_A|$, $t_B:=|T_B|$ and let $a_1\in A$. Suppose that:
\begin{enumerate}[\rm(i)]
	\item $a,b\geq \tau n$;
	\item $s_A,s_B,t_A,t_B \leq \eps n$;
	\item $\delta^0(G[A,B]) \geq \eta n$;
	\item $d^\pm _B(x) \geq b -\eps n$ for all but at most $\eps n$ vertices $x \in A$;
	\item $d^\pm _A(x) \geq a -\eps n$ for all but at most $\eps n$ vertices $x \in B$;
	\item $d^-_A(x)\geq \eta n$ for all $x\in S_A$, $d^+_B(x)\geq \eta n$ for all $x\in S_B$, $d^+_A(x)\geq \eta n$ for all $x\in T_A$ and $d^-_B(x)\geq \eta n$ for all $x\in T_B$.
\end{enumerate}
Suppose that $P$ is a path of length at most $\eta^2 n$ which contains at least $200\eps n$ sink vertices. Let $P_1, P_2, P_3$ be a useful tripartition of $P$ with sink/source/sink sets $\cQ_1, \cQ_2, \cQ_3$. Let $L\subseteq P_2$ be a link. Suppose that $G\setminus (S_A\cup S_B\cup T_A\cup T_B)$ contains a copy $L^G$ of $L$ which is an $AB$-path if $d_C(P,L)$ is even and a $BA$-path otherwise. Let $r_A$ be the number of repeated $A$s in $L^G$ and $r_B$ be the number of repeated $B$s in $L^G$. Let $G'$ be the graph with vertex set $V(G)$ and edges $$E(A,B\cup S_A)\cup E(B,A\cup T_B) \cup E(T_A, A) \cup E(S_B,B) \cup E(L^G).$$ Then $G'$ contains a copy $P^G$ of $P$ such that:
\begin{itemize}
	\item $L^G\subseteq P^G$;
	\item $P^G$ covers $S_A, S_B, T_A, T_B$;
	\item $a_1$ is the initial vertex of $P^G$;
	\item The final vertex of $P^G$ lies in $B$ if $P$ has even length and $A$ if $P$ has odd length;
	\item $P^G$ has $s_A+t_A+r_A$ repeated $A$s and $s_B+t_B+r_B$ repeated $B$s.
\end{itemize}
\end{prop}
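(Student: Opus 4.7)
The plan is to construct $P^G$ as a concatenation of three pieces: a greedy embedding of the part of $P$ strictly before $L$, followed by $L^G$ itself, followed by a greedy embedding of the part of $P$ strictly after $L$. The first piece starts at $a_1$ and ends at the initial vertex of $L^G$, and the third piece starts at the final vertex of $L^G$ and ends at the final vertex of $P^G$.

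For the greedy pieces, most vertices will be embedded by pure $AB$-alternation using the bipartite edges $E(A,B)\cup E(B,A)\subseteq E(G')$. Conditions~(iii)--(v) ensure that $G'[A,B]$ is close to a complete bipartite digraph: all but $O(\eps n)$ vertices of $A$ (resp.\ $B$) have in- and outdegree at least $b-\eps n$ (resp.\ $a-\eps n$) into the opposite class. Since $|P|\leq \eta^2 n$, the running count of used vertices never approaches $\tau n$, so each greedy extension step has many valid choices. At each sink in $\cQ_1\cup \cQ_3$ (respectively each source in $\cQ_2$), we either continue the $AB$-alternation with an $A$- or $B$-vertex, or insert an exceptional vertex of matching type: an $S_A$- or $T_B$-vertex at a sink, using edges from $E(A,S_A)\cup E(B,T_B)\subseteq E(G')$ supplied by~(vi), or a $T_A$- or $S_B$-vertex at a source, using $E(T_A,A)\cup E(S_B,B)$.

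The main obstacle is parity. A sink at position $i$ in $P$ has both of its $P$-neighbours embedded at the same $AB$-parity, so it can accommodate either an $S_A$-vertex (if those neighbours are in $A$) or a $T_B$-vertex (if in $B$), but not both; an analogous dichotomy holds at each source. The useful tripartition forces all of $\cQ_1\cup \cQ_3$ to share a single parity in $P$, and likewise for $\cQ_2$. To still cover all four exceptional types, we exploit $L^G$ as a parity shifter: because $L^G$ is an $AB$- or $BA$-path while $L$ has even length, a brief parity count shows $r_A+r_B$ is odd, so the $AB$-alternation parity immediately after $L^G$ is opposite to that immediately before. We arrange the embedding so that the first piece absorbs exactly those exceptional vertices whose type matches its natural parity (say, all $s_A$ vertices of $S_A$ at sinks from $\cQ_1$ and all $t_A$ vertices of $T_A$ at the sources from $\cQ_2$ lying in the first piece), while the third piece absorbs the rest. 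The bounds $|\cQ_i|\geq \lfloor m/12 \rfloor \geq 16\eps n$ combined with the link condition give at least $5\eps n$ candidate positions in each relevant sink/source set, comfortably exceeding the at most $\eps n$ exceptional vertices to be placed in each.

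Finally, each $S_A$- or $T_A$-insertion introduces exactly one repeated $A$ (the fragment $A\cdot S_A\cdot A$ or $A\cdot T_A\cdot A$ replaces what would have been an $A\cdot B\cdot A$ alternation), and each $S_B$- or $T_B$-insertion introduces one repeated $B$. Adding the $r_A,r_B$ repeats contributed by $L^G$ yields the claimed totals $s_A+t_A+r_A$ and $s_B+t_B+r_B$. The endpoint type then follows from a parity identity: if $Q$ denotes the subsequence of $A/B$-vertices of $P^G$, then the endpoint of $Q$ (which coincides with the endpoint of $P^G$) lies in $A$ if and only if the number of alternations in $Q$ is even, i.e.\ iff $|Q|-1-R\equiv 0\pmod 2$, where $R=(s_A+s_B+t_A+t_B)+(r_A+r_B)$. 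Since $|Q|-1=|P|-(s_A+s_B+t_A+t_B)$ and $r_A+r_B$ is odd, this reduces to ``endpoint lies in $A$ iff $|P|$ is odd'', matching the claim.
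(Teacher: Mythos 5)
Your construction is essentially the paper's: start at $a_1$, embed by $AB$-alternation, insert the exceptional vertices at sinks of $\cQ_1\cup\cQ_3$ or sources of $\cQ_2$ of the matching $AB$-parity, and use $L^G$ as the parity shifter so that both parities occur on both sides of $L$. (The paper makes the parity bookkeeping explicit by splitting on the parity of $d_C(q_1,q_2)$ for $q_1\in\cQ_1$, $q_2\in\cQ_2$, which governs which of $S_B,T_A$ go before $L$ and which after; you leave this implicit under ``matches its natural parity,'' but the idea is the same.)

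However, the parity calculation contains a genuine error. You claim that $r_A+r_B$ is odd. That is false in general: the hypothesis only forbids $L^G$ from touching $S_A\cup S_B\cup T_A\cup T_B$, so $L^G$ may (and in the paper's applications typically does) pass through other vertices of $S\cup T$. Writing $k$ for the number of such vertices, the correct parity fact is that $k+r_A+r_B$ is odd: since $L$ has even length, $L^G$ has an odd number of vertices, and for a path starting in $A$ and ending in $B$ with $k$ non-$A\cup B$ interior vertices the number of alternations $|L|-1-k-r_A-r_B$ must be odd. For instance, the useful paths $L^G$ used in Lemma~\ref{lem:ABST2} satisfy $r_A=r_B=0$ with $k$ odd. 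Your formula $|Q|-1=|P|-(s_A+s_B+t_A+t_B)$ is also missing a $-k$ term (those $k$ vertices are not $A\cup B$ vertices either). The two omitted $k$'s cancel in your final endpoint-parity computation, so the conclusion you reach is correct, but only by accident: replacing $r_A+r_B$ by $k+r_A+r_B$ throughout both fixes the argument and makes the cancellation visible rather than fortuitous.
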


\begin{proof}
We may assume, without loss of generality, that the initial vertex of $P$ lies in $\cQ_1$. If not, let $x$ be the first vertex on $P$ lying in $\cQ_1$ and greedily embed the initial segment $(Px)$ of $P$ starting at $a_1$ using edges in $E(A,B)\cup E(B,A)$. Let $a_1'$ be the image of $x$. We can then use symmetry to relabel the sets $A,B, S_A, S_B, T_A, T_B$, if necessary, to assume that $a_1'\in A$.

We will use (vi) to find a copy of $P$ which covers the vertices in $S_A\cup T_B$ by sink vertices in $\cQ_1\cup \cQ_3$ and the vertices in $S_B\cup T_A$ by source vertices in $\cQ_2$. We will use that $|\cQ_i|\geq 15\eps n$ for all $i$ and also that (iii)--(v) together imply that $G'$ contains a path of length three of any orientation between any pair of vertices in $x\in A$ and $y\in B$. Consider any $q_1\in \cQ_1$ and $q_2\in \cQ_2$. The order in which we cover the vertices will depend on whether $d_C(q_1,q_2)$ is even or odd (note that the parity of $d_C(q_1,q_2)$ does not depend on the choice of $q_1$ and $q_2$).

Suppose first that $d_C(q_1,q_2)$ is even. We find a copy of $P$ in $G'$ as follows. Map the initial vertex of $P$ to $a_1$. Then greedily cover all vertices in $T_B$ so that they are the images of sink vertices in $\cQ_1$ using a path $P_1^G$ which is isomorphic to $P_1$ and has the form $(A*BT_BB)^{t_B}A*A$.
Let $x_L$ be the initial vertex of $L$ and $y_L$ be the final vertex. Let $x_L^G$ and $y_L^G$ be the images of $x_L$ and $y_L$ in $L^G$. Cover all vertices in $S_B$ so that they are the images of source vertices in $\cQ_2$ using a path isomorphic to $(P_2x_L)$ which starts from the final vertex of $P_1^G$ and ends at $x_L^G$. This path has the form $(A*BS_BB)^{s_B}A*X$, where $X:=A$ if $d_C(P,L)$ is even and $X:=B$ if $d_C(P,L)$ is odd. Now use the path $L^G$. Next cover all vertices in $T_A$ so that they are the images of source vertices in $\cQ_2$ using a path isomorphic to $(y_LP_2)$ whose initial vertex is $y_L^G$. This path has the form $Y*A(B*AT_AA)^{t_A}B*B$, where $Y:=B$ if $d_C(P,L)$ is even and $Y:=A$ if $d_C(P,L)$ is odd.%
\COMMENT{We use that the parity of the lengths of $(P_2x_L)$ and $(y_LP_2)$ is the same. Since $P_1$ and $P_2$ have even length we note that the parity of the lengths of $P_3$ and $P$ is the same. Finally, the vertices in $\cQ_3$ have even distance to the initial vertex of $P_3$.}
 Let $P_2^G$ denote the copy of $P_2$ obtained in this way. Finally, starting from the final vertex of $P_2^G$, find a copy of $P_3$ which covers all vertices in $S_A$ by sink vertices in $\cQ_3$ and has the form $(B*AS_AA)^{s_A}B*B$ if $P$ (and thus also $P_3$) has even length and $(B*AS_AA)^{s_A}B*A$ if $P$ (and thus also $P_3$) has odd length. If $d_C(q_1,q_2)$ is odd, we find a copy of $P$ which covers $T_B$, $T_A$, $V(L^G)$, $S_B$, $S_A$ (in this order) in the same way.%
\COMMENT{Map the initial vertex of $P$ to $a_1$. Then greedily cover all vertices in $T_B$ so that they are the images of sink vertices in $\cQ_1$ using a path $P_1^G$ which is isomorphic to $P_1$ and has the form $(A*BT_BB)^{t_B}A*A$.
Let $x_L$ be the initial vertex of $L$ and $y_L$ be the final vertex. Let $x_L^G$ and $y_L^G$ be the images of $x_L$ and $y_L$ in $L^G$. Cover all vertices in $T_A$ so that they are the images of source vertices in $\cQ_2$ using a path isomorphic to $(P_2x_L)$ which starts from the final vertex of $P_1^G$ and ends at $x_L^G$. This path has the form $A(B*AT_AA)^{t_A}B*X$, where $X:=A$ if $d_C(P,L)$ is even and $X:=B$ if $d_C(P,L)$ is odd. Now use the path $L^G$. Next cover all vertices in $S_B$ so that they are the images of source vertices in $\cQ_2$ using a path isomorphic to $(y_LP_2)$ whose initial vertex is $y_L^G$. This path has the form $Y*B(A*BS_BB)^{s_B}A*B$, where $Y:=B$ if $d_C(P,L)$ is even and $Y:=A$ if $d_C(P,L)$ is odd. Let $P_2^G$ denote the copy of $P_2$ obtained in this way. Finally, starting from the final vertex of $P_2^G$, find a copy of $P_3$ which covers all vertices in $S_A$ by sink vertices in $\cQ_3$ and has the form $(B*AS_AA)^{s_A}B*B$ if $P$ has even length and $(B*AS_AA)^{s_A}B*A$ if $P$ has odd length.}
Observe that $P^G$ has $s_A+t_A+r_A$ repeated $A$s and $s_B+t_B+r_B$ repeated $B$s, as required.
\end{proof}

We are now in a position to find an exceptional cover. The proof splits into a number of cases and we will require the assumption that $C$ is not antidirected. We will need a matching found using Proposition~\ref{prop:balance} and a careful assignment of the remaining vertices in $S\cup T$ to sets $S_A, S_B, T_A$ and $T_B$ to ensure that the path found by Proposition~\ref{prop:sinksource} leaves a balanced number of vertices in $A$ and $B$ uncovered.

\begin{lemma}\label{lem:excover2}
Suppose $1/n \ll \eps_3 \ll \eps_4 \ll 1$.
Let $G$ be a digraph on $n$ vertices with $\delta^0(G) \geq n/2$. Suppose $A, B, S, T$ is a partition of $V(G)$ satisfying (Q\ref{Q*})--(Q\ref{Q8}). If $C$ is an oriented cycle on $n$ vertices, $C$ is not antidirected and $\sigma(C) \geq\eps_4n$, then there is an exceptional cover $P$ of $G$ of length at most $2\eps_4 n$.
\end{lemma}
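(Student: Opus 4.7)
The plan is to adapt the strategy of Proposition~\ref{prop:excover1}, using sink/source coverings (via Proposition~\ref{prop:sinksource}) in place of the consistently oriented length-three paths that were used in the case $\sigma(C) < \eps_4 n$. Set $d := b - a \geq 0$. First, by a pigeonhole argument over consecutive windows of the cycle (using $\sigma(C) \geq \eps_4 n$ together with $\eps_3 \ll \eps_4$), I locate a subpath $P \subseteq C$ of length at most $2\eps_4 n$ containing at least $200 \eps_3 n$ sink vertices, and fix a useful tripartition $P_1, P_2, P_3$ of $P$ with sink/source/sink sets $\cQ_1, \cQ_2, \cQ_3$.

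If $d \geq 1$, I invoke Proposition~\ref{prop:balance} to obtain a matching $M \subseteq E(B\cup T, B)$ of size $d+2$, and split it as $M = M_{BB} \cup M_{TB}$ with $M_{BB} \subseteq E(B)$ and $M_{TB} \subseteq E(T,B)$; let $B_M, T_M$ denote the $B$- and $T$-endpoints of $M$. The vertices of $B_M \cup T_M$ will be absorbed into a short link $L^G$. I split the remaining vertices of $S$ as $S_A \cup S_B$ and of $T \setminus T_M$ as $T_A \cup T_B$ using (Q\ref{Q6})--(Q\ref{Q7}), so that hypothesis (vi) of Proposition~\ref{prop:sinksource} holds. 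By (\ref{eqn:repeats}), the balance condition (EC\ref{EC3}) reduces to $\mathrm{rep}(B) - \mathrm{rep}(A) = d$, which after applying Proposition~\ref{prop:sinksource} becomes
\[
(s_B - s_A) + (t_B - t_A) + (r_B - r_A) = d,
\]
where $r_A, r_B$ count the repeats inside $L^G$. Swapping one vertex between $S_A$ and $S_B$ (or between $T_A$ and $T_B$) changes the left-hand side by $\pm 2$; the link supplies the finer correction.

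The heart of the argument is the construction of $L^G$, and this is where the hypothesis that $C$ is not antidirected enters. I take a constant-length subpath $L \subseteq P_2$ of even length and embed $L^G$ in $G_M$ so that each edge of $M_{BB}$ appears directly as a $BB$-edge of $L^G$ (contributing one repeated $B$) and each edge of $M_{TB}$ is inserted inside a $BTB$-gadget via (Q\ref{Q7}) (contributing one repeated $B$ once the gadget's $T$-vertex is flanked by $B$-vertices); the rest of $L^G$ is filled as an alternating $A$--$B$ path using (Q\ref{Q3})--(Q\ref{Q4}). The non-antidirectedness of $C$ supplies a consistently oriented subpath of length two in $C$ which, by the slack available in choosing $P$, can be positioned inside $P_2$; this consistent segment serves as the anchor whose orientation allows the $d$ matching edges to be inserted into $L^G$ in a way that produces the correct count of repeated $B$s with the required parity. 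In an antidirected $C$ no such anchor exists and the required insertions cannot be made.

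Finally, I apply Proposition~\ref{prop:sinksource} to extend $L^G$ to a copy $P^G$ of $P$ in $G$ that covers $S_A \cup S_B \cup T_A \cup T_B$, starts at any prescribed $a_1 \in A$, and ends in $A$ or $B$ according to the parity of $|P|$. A constant-length alternating prefix/suffix using (Q\ref{Q3})--(Q\ref{Q4}) shifts both endvertices into $A$ without exceeding the length bound $2\eps_4 n$. Conditions (EC\ref{EC1}) and (EC\ref{EC2}) are then immediate, and (EC\ref{EC3}) follows from the choice of $s_A, s_B, t_A, t_B$ and of $L^G$ made above. The main obstacle is the link construction: simultaneously producing $d$ matching-induced repeated $B$s in $L^G$ and matching the required parity is delicate, and this is precisely the step for which the non-antidirectedness of $C$ is indispensable.
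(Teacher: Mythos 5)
Your high-level architecture matches the paper's (Proposition~\ref{prop:balance} for a matching, a useful tripartition, Proposition~\ref{prop:sinksource} for the bulk covering, and \eqref{eqn:repeats} for the balance count), but two of your steps are wrong or missing, and they sit exactly where the real difficulty lies. First, the link $L^G$ cannot be of ``constant length'': you want it to absorb all $d$ matching edges, one repeated $B$ per edge, so $|L^G| \geq 2d$, and $d = b-a$ can be as large as roughly $2\eps_3 n$. The paper uses the link to carry only one or two edges (the set $E'$ of size $1$ or $2$, chosen to correct a parity depending on whether $n$ is odd or even), and covers the remaining matching edges $E \subseteq E(B)$ via a separate alternating suffix of the form $Z*B(A*ABB)^{|E|}A$ appended \emph{after} the output of Proposition~\ref{prop:sinksource}, living in the larger sink-rich window $Q'$ but outside $Q$ itself.

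Second, and more seriously, you never address the case $a=b$ and $s=t$, where Proposition~\ref{prop:balance} gives no matching and the balance $s_A+t_A = s_B+t_B$ must be engineered by other means. This is the paper's Case~2, by far the hardest part of the proof, and is where non-antidirectedness is genuinely indispensable. The paper introduces a dichotomy: if $C$ contains an antidirected subpath of length $500\eps_3 n$, they locate a subpath of $C$ antidirected except for its initial two consistently oriented edges, and run through further subcases depending on whether $E(T,B)\cup E(B,S)\cup E(S,A)\cup E(A,T)$ is empty; if $C$ has no such long antidirected stretch, they cut a sink-rich window into eleven pieces to force a consistent pair of length-two subpaths at even distance. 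Your appeal to a consistent length-two segment positioned inside $P_2$ which ``allows the $d$ matching edges to be inserted'' also mischaracterizes the mechanism: matching edges can be inserted as $BB$-edges regardless of orientation, since a single directed edge $b_1b_2$ can be traversed in either order. In the paper the consistency is used instead (in the close-to-antidirected subcase) to embed a short \emph{prefix} covering one vertex of $S$ without creating any repeats, so that together with a $BTA$-link covering one vertex of $T$ the residual counts $s_A = s-1$ and $t_B = t-1$ cancel when $s=t$.
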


\begin{proof}
Let $d:=b-a$, $k:=t-s$ and $r:=s+t$.
Since $\sigma(C) \geq \eps_4n$, we can use an averaging argument to guarantee a subpath $Q'$ of $C$ of length at most $\eps_4n$ such that $Q'$ contains at least $2\sqrt{\eps_3}n$ sink vertices.%
\COMMENT{To see this, we partition $C$ into subpaths of length $\eps_4 n$. The expected number of sink vertices in each subpath is at least
$(\eps_4n-2)\eps_4n/n\geq 2\sqrt{\eps_3}n$.}
Let $Q$ be an initial subpath of $Q'$ which has odd length and contains $\sqrt{\eps_3}n$ sink vertices.

\medskip

\noindent \textbf{Case 1: }\emph{$a<b$ or $s<t$.}

We will find disjoint sets of vertices $S_A,S_B, T_A, T_B$, of sizes $s_A, s_B, t_A, t_B$ respectively, and a matching $M'=E\cup E'$ (where $E$ and $E'$ are disjoint) such that the following hold:
\begin{enumerate}[(E1)]
\item $S_A\cup S_B=S$ and $T_A\cup T_B=T\setminus V(E')$;\label{E1}
\item $E\subseteq E(B)$, $|E|\leq d$;\label{E2a}
\item $E' \subseteq E(B\cup T, B) \cup E(A,A\cup T)$ and $1\leq |E'|\leq 2$;\label{E2b}
\item If $p:=|E'\cap E(B)|-|E'\cap E(A)|$, then $s_A+t_A+d=s_B+t_B+p+|E|$.\label{E3}
\end{enumerate}

We find sets satisfying (E\ref{E1})--(E\ref{E3}) as follows.
Suppose first that $n$ is odd. Note that we can find a matching $M\subseteq E(B\cup T,B)$ of size $d+1$. Indeed, if $a<b$ then $M$ exists by Proposition~\ref{prop:balance} and if $a=b$, and so $s<t$, we use that $a+s<n/2$ and $\delta^0(G) \geq n/2$ to find $M$ of size $d+1=1$.
 Fix one edge $e\in M$ and let $E':=\{e\}$. There are $r':=r-|V(E')\cap T|$ vertices in $S\cup T$ which are not covered by $E'$. Set $d':=\min\{r',d-p\}$ and let $E\subseteq(M\setminus E') \cap E(B)$ have size $d-p-d'$.%
\COMMENT{$M\setminus E'$ can contain at most $r'$ $TB$-edges, so must have at least $d-r'$ $B$-edges. Note that $d-p-d'=\max \{d-p-r', 0\}\leq d-r'$ as $p\geq 0$.} 

Suppose that $n$ is even.  If $a<b$, by Proposition~\ref{prop:balance}, we find a matching $M$ of size $d+2$ in $E(B\cup T, B)$. Fix two edges $e_1, e_2 \in M$ and let $E':=\{e_1, e_2\}$. Choose $r'$, $d'$ and $E$ as above.

If $n$ is even and $a=b$, then $a+s = b+s = (n-k)/2 \leq n/2-1$. So $d^+_{A\cup T}(x) \geq k/2$ for each $x\in A$ and $d^-_{B \cup T}(x)\geq k/2$ for each $x\in B$. Either we can find a matching $M$ of size two in $E(B\cup T, B) \cup E(A, A\cup T)$ or $t=s+2$ and there is a vertex $v \in T$ such that $A \subseteq N^-(v)$ and $B \subseteq N^+(v)$. In the latter case, move $v$ to $S$ to get a new partition satisfying (Q\ref{Q*})--(Q\ref{Q8}) and the conditions of Case~2.  So we will assume that the former holds. Let $E':=M$, $E:=\emptyset$, $r':=r-|V(E')\cap T|$ and $d':=-p$.

In each of the above cases, note that $d'\equiv r' \mod 2$%
\COMMENT{since $r'=r+|E'\cap E(A)|+|E'\cap E(B)|-|E'|$ and $d\equiv r-|E'| \mod 2$.}
 and $|d'|\leq r'$. So we can choose disjoint subsets $S_A, S_B, T_A, T_B$ satisfying (E\ref{E1}) such that $s_A+t_A=(r'-d')/2$ and $s_B+t_B= (r'+d')/2$. Then (E\ref{E3}) is also satisfied.%
\COMMENT{$s_B+t_B+p+|E|=(r'+d')/2+p+(d-p-d') = (r'-d')/2+d=s_A+t_A+d$.}

We construct an exceptional cover as follows.
Let $L_1$ denote the oriented path of length two whose second vertex is a sink and let $L_2$ denote the oriented path of length two whose second vertex is a source. For each $e\in E'$, we find a copy $L(e)$ of $L_1$ or $L_2$ covering $e$. If $e\in E(A)$ let $L(e)$ be a copy of $L_1$ of the form $AAB$, if $e\in E(B)$ let $L(e)$ be a copy of $L_1$ of the form $ABB$, if $e\in E(A,T)$ let $L(e)$ be a copy of $L_1$ of the form $ATB$ and if $e\in E(T,B)$ let $L(e)$ be a copy of $L_2$ of the form $ATB$. Note that for each $e\in E'$, the orientation of $L(e)$ is the same regardless of whether it is traversed from its initial vertex to final vertex or vice versa. This means that we can embed it either as an $AB$-path or a $BA$-path.

Let $a_1$ be any vertex in $A$ and let $e_1\in E'$. Let $r_A$ and $r_B$ be the number of repeated $A$s and $B$s, respectively, in $L(e_1)$. So $r_A=1$ if and only if $e_1\in E(A)$, otherwise $r_A=0$. Also, $r_B=1$ if and only if $e_1\in E(B)$, otherwise $r_B=0$. 
Consider a useful tripartition $P_1, P_2, P_3$ of $Q$. Let $L\subseteq P_2$ be a link which is isomorphic to $L(e_1)$. Let $x$ denote the final vertex of $Q$. Using Proposition~\ref{prop:sinksource} (with $2\eps_3, \eps_4, 1/4$ playing the roles of $\eps, \eta, \tau$), we find a copy $Q^G$ of $Q$ covering $S_A, S_B, T_A, T_B$ whose initial vertex is $a_1$. Moreover, $L(e_1)\subseteq Q^G \subseteq G_{\{e_1\}} \subseteq G_M$, the final vertex $x^G$ of $Q^G$ lies in $A$, $Q^G$ has $s_A+t_A+r_A$ repeated $A$s and $s_B+t_B+r_B$ repeated $B$s. If $|E'|=2$, let $e_2\in E'\setminus\{e_1\}$. Let $Q'':=(xQ')$. Let $y$ be the second source vertex in $Q''$ if $e_2\in E(T,B)$ and the second sink vertex in $Q''$ otherwise. Let $y^-$ be the vertex preceding $y$ on $C$, let $y^+$ be the vertex following $y$ on $C$ and let $q:=d_C(x,y^-)$.  Find a path in $G$ whose initial vertex is $x^G$ which is isomorphic to $(Q''y^-)$ and is of the form $A*A$  if $q$ is even and $A*B$ if $q$ is odd, such that the final vertex of this path is an endvertex of $L(e_2)$. Then use the path $L(e_2)$ itself. Let $Z:=B$ if $q$ is even and $Z:=A$ if $q$ is odd.  Finally, extend the path to cover all edges in $E$ using a path of the form $Z*B(A*ABB)^{|E|}A$ which is isomorphic to an initial segment of $(y^+Q'')$. Let $P$ denote the resulting extended subpath of $C$, so $Q\subseteq P \subseteq Q'$. Let $P^G$ be the copy of $P$ in $G_M$.

Note that (EC\ref{EC1}) and (EC\ref{EC2}) hold. Each repeated $A$ in $P^G$ is either a repeated $A$ in $Q^G$ or it occurs when $P^G$ uses $L(e_2)$ in the case when $e_2\in E(A)$. Similarly, each repeated $B$ in $P^G$ is either a repeated $B$ in $Q^G$ or it occurs when $P^G$ uses $L(e_2)$ in the case when $e_2\in E(B)$ or when $P^G$ uses an edge in $E$. Substituting into (\ref{eqn:repeats}) and recalling (E\ref{E3}) gives
\begin{align*}
|B \setminus V(P^G)|-|A\setminus V(P^G)| =& b-a-(s_B+t_B+|E|+|E'\cap E(B)|)+ (s_A+t_A+|E'\cap E(A)|)+1\\
=&d -(s_B+t_B+|E|)-p+(s_A+t_A)+1=1.
\end{align*}
So (EC\ref{EC3}) is satisfied and $P^G$ is an exceptional cover.

\medskip

\noindent \textbf{Case 2: } \emph{$a=b$ and $s=t$.}

If $s=t=0$ then any path consisting of one vertex in $A$ is an exceptional cover. So we will assume that $s,t\geq 1$. 
We say that $C$ is \emph{close to antidirected} if it contains an antidirected subpath of length $500\eps_3n$.

\medskip

\noindent \textbf{Case 2.1: }\emph{$C$ is close to antidirected.}

If there is an edge $e\in E(T,B)\cup E(B, S) \cup E(S, A) \cup E(A,T)$ then we are able to find an exceptional cover in the graph $G_{\{e\}}$.  We illustrate how to do this when $e=t_1b_1\in E(T,B)$, the other cases are similar.%
\COMMENT{If $e=a_1t_1\in E(A,T)$ then the proof is almost identical, only difference being that the link is a copy of $L_1$ (oriented path of length two whose second vertex is a sink).\\
Suppose $e=s_1a_1\in E(S,A)$. Let $t_1\in T$. If initial edge of $P$ is forward, let $P'$ consist of first two edges of $P$ and let $(P')^G$ be a forward path of the form $B\{t_1\}A$. If initial edge is backward, let $P'$ be the first three edges of $P$ and find a copy $(P')^G$ of $P'$ of the form $A\{t_1\}BA$. Let $P'':=P\setminus P''$. Find a copy $L^G$ of $L_2$ which is of the form $ASB$ and uses $e$. Let $L\subseteq P_2$ be a link isomorphic to $L_1$. Set $S_A:=S\setminus\{s_1\}$, $T_B:=T \setminus\{t_1\}$ and $S_B, T_A:=\emptyset$. Apply Proposition~\ref{prop:sinksource}, as above. If $e=b_1s_1\in E(B,S)$, then the proof is nearly identical, uses copy of $L_1$ as the link instead.}
Since $C$ is close to but not antidirected, it follows that $C$ contains a path $P$ of length $500\eps_3n$ which is antidirected except for the initial two edges which are oriented consistently. Let $s_1 \in S$.  If the initial edge of $P$ is a forward edge, let $P'$ be the subpath of $P$ consisting of the first three edges of $P$ and find a copy $(P')^G$ of $P'$ in $G$ of the form $A\{s_1\}BA$. If the initial edge of $P$ is a backward edge, let $P'$ consist of the first two edges of $P$ and let $(P')^G$ be a backward path of the form $B\{s_1\}A$. Let $P''$ be the subpath of $P$ formed by removing from $P$ all edges in $P'$. Let $x^G\in A$ be the final vertex of $(P')^G$. Set $S_A:=S\setminus\{s_1\}$, $T_B:=T \setminus\{t_1\}$ and $S_B, T_A:=\emptyset$. Let $P_1,P_2, P_3$ be a useful tripartition of $P''$. As in Case~1, let $L_2$ denote the oriented path of length two whose second vertex is a source. Let $L\subseteq P_2$ be a link which is isomorphic to $L_2$ and map $L$ to a path $L^G$ of the form $BTA$ which uses the edge $t_1b_1$.  We use Proposition~\ref{prop:sinksource} to find a copy $(P'')^G$ of $P''$ which uses $L^G$, covers $S_A\cup T_B$ and whose initial vertex is mapped to $x^G$. Moreover, the final vertex of $P''$ is mapped to $A\cup B$ and $(P'')^G$ has $s_A=s-1$ repeated $A$s and $t_B=t-1$ repeated $B$s. Let $P^G$ be the path $(P')^G\cup (P'')^G$. Then $P^G$ satisfies (EC\ref{EC1}) and we may assume that (EC\ref{EC2}) holds, by adding a vertex in $A$ as a new initial vertex and/or final vertex if necessary. The repeated $A$s and $B$s in $P^G$ are precisely the repeated $A$s and $B$s in $(P'')^G$. Therefore, \eqref{eqn:repeats} implies that (EC\ref{EC3}) holds and $P^G$ forms an exceptional cover.

Let us suppose then that $E(T,B)\cup E(B, S)\cup E(S, A) \cup E(A,T)$ is empty. If $S=\{s_1\}, T=\{t_1\}$ then, since $\delta^0(G) \geq n/2$, $G$ must contain the edge $s_1t_1$ and edges $a_1s_1,b_1t_1$ for some $a_1\in A, b_1\in B$.  Since $C$ is not antidirected but has many sink vertices we may assume that $C$ contains a subpath $P=(uvxyz)$ where $uv, vx, yx\in E(C)$. We use the edges $a_1s_1, s_1t_1, b_1t_1$, as well as an additional $AB$- or $BA$-edge, to find a copy $P^G$ of $P$ in $G$ of the form $ASTBA$. The path $P^G$ forms an exceptional cover.

If $s=t=2$ and $e(S)=e(T)=2$, we find an exceptional cover as follows. Write $S=\{s_1, s_2\}$, $T=\{t_1,t_2\}$. We have that $s_is_j, t_it_j \in E(G)$ for all $i\neq j$. Note that $C$ is not antidirected, so $C$ must contain a path of length six which is antidirected except for its initial two edges which are consistently oriented.  Suppose first that the initial two edges of $P$ are forward edges. Let $a_1\in A$ be an inneighbour of $s_1$. Note that $s_2$ has an inneighbour in $T$, without loss of generality $t_1$. Let $b_1\in B$ be an inneighbour of $t_2$ and $a_2\in A$ be an outneighbour of $b_1$. We find a copy $P^G$ of $P$ which has the form $ASSTTBA$ and uses the edges $a_1s_1, s_1s_2, t_1s_2, t_1t_2, b_1t_2, b_1a_2$, in this order. If the initial two edges of $P$ are backward, we instead find a path of the form $ATTSSBA$. Note that in both cases, $P^G$ satisfies (EC\ref{EC1}) and (EC\ref{EC2}). $P^G$ has no repeated $A$s and $B$s and \eqref{eqn:repeats} implies that (EC\ref{EC3}) holds. So $P^G$ forms an exceptional cover.

So let us assume that $s,t\geq 2$ and, additionally, $e(S)+e(T)<4$ if $s=2$. There must exist two disjoint edges $e_1=t_1s_1$, $e_2=s_2t_2$ where $s_1,s_2 \in S$ and $t_1, t_2 \in T$ (since $\delta^0(G)\geq n/2$ and $E(T,B)\cup E(B, S)\cup E(S, A) \cup E(A,T)=\emptyset$).%
\COMMENT{If $s=t=2$, without loss of generality, $e(S)<2$. So we can write $S=\{s_1, s_2\}$ such that $s_1s_2\not\in E(G)$. Note that $s_1$ must have at least two outneighbours in $T$. Since $s_2$ must have at least one inneighbour in $T$, we can find the desired edges. If $s,t\geq 3$, we note that each vertex in $S$ has at least one outneighbour in $T$ and each vertex in $T$ has at least one inneighbour in $S$. By K\"onig's theorem, we find two disjoint $ST$-edges $e_1'$ and $e_2'$. Choose any $s_1\in S$ which does not lie on $e_1'$ or $e_2'$. Note that $s_1$ has an inneighbour in $T$ and this vertex can lie on at most one of $e_1',e_2'$. Hence we find the desired edges.}
We use these edges to find an exceptional cover as follows. We let $S_A:= S\setminus\{s_1,s_2\}$, $T_B:=T\setminus\{t_1, t_2\}$, $s_A:=|S_A|$ and $t_B:=|T_B|$. We use $e_1$ and $e_2$ to find an antidirected path $P^G$ which starts with a backward edge and is of the form
$$A\{t_1\}\{s_1\}A(B*AS_AA)^{s_A}B*B\{s_2\}\{t_2\}B(A*BT_BB)^{s_B}A.$$ The length of $P^G$ is less than  $500\eps_3n$. So, as $C$ is close to antidirected, $C$ must contain a subpath isomorphic to $P^G$. We claim that $P^G$ is an exceptional cover. Clearly, $P^G$ satisfies (EC\ref{EC1}) and (EC\ref{EC2}). For (EC\ref{EC3}), note that $P^G$ contains an equal number of repeated $A$s and repeated $B$s. Then (\ref{eqn:repeats}) implies that $|B\cap V(P^G)|=|A\cap V(P^G)|+1$.

\medskip

\noindent \textbf{Case 2.2: }\emph{$C$ is far from antidirected.}

Recall that $Q$ is a subpath of $C$ of length at most $\eps_4n$ containing at least $\sqrt{\eps_3}n$ sink vertices. Let $\cQ$ be a maximum collection of sink vertices in $Q$ such that all vertices in $\cQ$ are an even distance apart, then $|\cQ|\geq \sqrt{\eps_3}n/2$. Partition the path $Q$ into $11$ internally disjoint subpaths so that $Q=(P_1P_1'P_2P_2'\dots P_5P_5'P_6)$ and each subpath contains at least $300\eps_3n$ elements of $\cQ$. Note that each $P_i'$ has length greater than $500\eps_3n$ and so is not antidirected, that is, each $P_i'$ must contain a consistently oriented subpath $P_i''$ of length two. At least three of the $P_i''$ must form a consistent set. Thus there must exist $i<j$ such that $d_C(P_i'', P_j'')$ is even and $\{P_i'',P_j''\}$ is consistent. We may assume, without loss of generality, that $P_i'', P_j''$ are forward paths and that the second vertex of $P_i$ is in $\cQ$. Let $P$ be the subpath of $Q$ whose initial vertex is the initial vertex of $P_i$ and whose final vertex is the final vertex of $P_j''$.

We will find an exceptional cover isomorphic to $P$ as follows. Choose $s_1\in S$ and $t_1 \in T$ arbitrarily. Set $S_A:=S\setminus \{s_1\}$ and $T_B:=T\setminus \{t_1\}$. Map the initial vertex of $P$ to $A$. We find a copy of $P$ which maps each vertex in $S_A$ to a sink vertex in $P_i$ and each vertex in $T_B$ to a sink vertex in $P_j$. If $d_C(P_i, P_i'')$ is even, $P_i''$ is mapped to a path $L'$ of the form $A\{s_1\}B$ and $P_j''$ is mapped to a path $L''$ of the form $B\{t_1\}A$. If $d_C(P_i, P_i'')$ is odd, $P_i''$ is mapped to a path $L'$ of the form $B\{t_1\}A$ and $P_j''$ is mapped to a path $L''$ of the form $A\{s_1\}B$. Thus, if $d_C(P_i, P_i'')$ is even, we obtain a copy $P^G$ which starts with a path of the form $A(B*AS_AA)^{s_A}B*A$, then uses $L'$ and continues with a path of the form $B*B(A*BT_BB)^{t_B}A*B$. Finally, the path uses $L''$. The case when $d_C(P_i, P_i'')$ is odd is similar.%
\COMMENT{$d_C(P_i, P_i'')$ is odd: we obtain a copy $P^G$ which starts with a path of the form $A(B*AS_AA)^{s_A}B*B$, then uses $L'$ and continues with a path of the form $A*B(A*BT_BB)^{t_B}A*A$. Finally, the path uses $L''$.}
 (EC\ref{EC1}) holds and we may assume that (EC\ref{EC2}) holds by adding one vertex to $P$ if necessary. Note that $P^G$ contains an equal number of repeated $A$s and $B$s, so (\ref{eqn:repeats}) implies that (EC\ref{EC3}) holds and $P^G$ is an exceptional cover.
\end{proof}

\subsection{Finding a copy of $C$}

Proposition~\ref{prop:excover1} and Lemma~\ref{lem:excover2} allow us to find a short exceptional cover for any cycle which is not antidirected. We complete the proof of Lemma~\ref{lem:AB} by extending this path to cover the small number of vertices of low degree remaining in $A$ and $B$ and then applying Proposition~\ref{prop:completepath}.

\begin{proofof}\textbf{Lemma~\ref{lem:AB}.}
Let $P$ be an exceptional cover of $G$ of length at most $21\eps_4n$, guaranteed by Proposition~\ref{prop:excover1} or Lemma~\ref{lem:excover2}. Let 
\begin{align*}
X:= \{v\in A : d^+_B(v)< n/2-\eps_3 n \text{ or } d^-_B(v) < n/2-\eps_3 n\} &\text{ and}\\
Y:= \{v\in B : d^+_A(v)< n/2-\eps_3 n \text{ or } d^-_A(v) < n/2-\eps_3 n\}&.
\end{align*}
 (Q\ref{Q3}) and (Q\ref{Q4}) together imply that $|X\cup Y|\leq 2\eps_3 n$. Together with (Q\ref{Q2}), this allows us to cover the vertices in $X\cup Y$ by any orientation of a path of length at most $\eps_4 n$. So we can extend $P$ to cover the remaining vertices in $X\cup Y$ (by a path which alternates between $A$ and $B$). Let $P'$ denote this extended path. Thus $|P'| \leq 22\eps_4n$. Let $x$ and $y$ be the endvertices of $P'$. We may assume that $x, y \in A\setminus X$. Let $A':=(A\setminus V(P'))\cup \{x,y\}$ and $B':=B\setminus V(P')$ and consider $G':=G[A',B']$. Note that $|A'|=|B'|+1$ by (EC\ref{EC3}) and 
$$\delta^0(G') \geq n/2-\eps_3 n-22\eps_4n \geq (7|B'|+2)/8.$$ 
Thus, by Proposition~\ref{prop:completepath}(ii), $G'$ has a Hamilton path of any orientation between $x$ and $y$ in $G$. We combine this path with $P'$, to obtain a copy of $C$.
\end{proofof}

\section{$G$ is $ABST$-extremal}\label{sec:ABST}

In this section we prove that Theorem~\ref{thm:main} holds for all $ABST$-extremal graphs. When $G$ is $ABST$-extremal, the sets $A$, $B$, $S$ and $T$ are all of significant size; $G[S]$ and $G[T]$ look like cliques and $G[A,B]$ resembles a complete bipartite graph. The proof will combine ideas from Sections~\ref{sec:ST}~and~\ref{sec:AB}.

\begin{lemma}\label{lem:ABST}
Suppose that $1/n \ll \eps \ll \eps_1 \ll \eta_1 \ll \tau \ll 1.$
Let $G$ be a digraph on $n$ vertices with $\delta^0(G)\geq n/2$ and assume that $G$ is $ABST$-extremal. If $C$ is any orientation of a cycle on $n$ vertices which is not antidirected, then $G$ contains a copy of $C$.
\end{lemma}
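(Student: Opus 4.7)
The plan is to combine the strategies developed in Sections~\ref{sec:ST} and~\ref{sec:AB}. By (R\ref{R8}) and (R\ref{R9}), $G[S]$ and $G[T]$ are each almost complete digraphs on roughly $n/4$ vertices, and by (R\ref{R6}) and (R\ref{R7}), $G[A,B]$ is an almost complete balanced bipartite digraph. Moreover, (R\ref{R4}) and (R\ref{R5}) say that the dominant edge directions between classes follow the rotational pattern $A \to S \to B \to T \to A$. Accordingly, I will split $C$ cyclically as $C = (P_S R_1 P_{AB} R_2 P_T R_3)$, where $P_S$ is to be embedded into $G[S]$, $P_T$ into $G[T]$, and $P_{AB}$ into $G[A,B]$, and where the short paths $R_1, R_2, R_3$ link the three dense regions through the rotational pattern. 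The lengths of $P_S$ and $P_T$ will be chosen close to $s$ and $t$, and $P_{AB}$ close to $a+b$, so that each dense region receives the right number of vertices.

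As in the $ST$-extremal and $AB$-extremal cases, I would split into two subcases depending on $\sigma(C)$. If $\sigma(C) \geq \eps_4 n$, then $C$ contains many sink and source vertices, which I would use (as in Lemma~\ref{lem:linking1} and Lemma~\ref{lem:excover2}) to absorb the $O(\eps_1 n)$ vertices of each class that violate the tight degree bounds of (R\ref{R6})--(R\ref{R9}); the linking paths $R_i$ will be positioned at sink/source-rich segments of $C$. If $\sigma(C) < \eps_4 n$, then $C$ contains many long runs (Proposition~\ref{prop:goodpaths}), and I would use these consistently oriented runs to accommodate short consistent detours covering the exceptional vertices, analogously to the good path systems of Lemma~\ref{lem:linking2} and to Proposition~\ref{prop:excover1}.

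Once the linking paths $R_1, R_2, R_3$ and the exceptional covers are in place, the main embeddings are straightforward. For $P_S$, I would apply Proposition~\ref{prop:completepath}(i) to the induced subdigraph of $G[S]$ obtained by deleting the $O(\eps_1 n)$ vertices of $S$ used by the linking paths and exceptional cover; (R\ref{R8}) guarantees that the remaining digraph has minimum semidegree at least $7|S|/8$, so the required Hamilton path of prescribed orientation between the prescribed endpoints exists. The same argument via (R\ref{R9}) produces $P_T$ in $G[T]$. For $P_{AB}$, I would apply Proposition~\ref{prop:completepath}(ii) to the bipartite subdigraph induced on the uncovered parts of $A$ and $B$, using (R\ref{R6}) and (R\ref{R7}).

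The main obstacle is the balancing demanded by Proposition~\ref{prop:completepath}(ii): the uncovered vertex sets in $A$ and $B$ must differ in size by exactly one. Since $b-a$ may be as large as $\eps_1 n$ by (R\ref{R2}) and since the linking paths and exceptional covers will in general consume unequal numbers of $A$- and $B$-vertices, I must carefully control the $A$/$B$ composition of everything outside $P_{AB}$. As in Section~\ref{sec:AB}, I will track this via the ``repeated $A$/repeated $B$'' bookkeeping encoded by \eqref{eqn:repeats}, and I will use an $ABST$-analogue of Proposition~\ref{prop:balance}, supplied by (R\ref{R3}) and (R\ref{R7}), to find enough matching edges inside $B$ or between $B$ and $T$ to insert short $B$-detours into the linking paths and restore the required balance. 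Finally, the assumption that $C$ is not antidirected enters exactly as in Section~\ref{sec:AB}: an entirely antidirected $P_{AB}$ would impose rigid bipartite parity constraints on the endpoints of $R_1$ and $R_2$ that cannot in general be met, whereas a single consistently oriented pair of edges in $C$ provides the flexibility needed to match endpoints and finish the embedding.
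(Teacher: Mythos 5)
Your proposal follows essentially the same approach as the paper: split on $\sigma(C)$, build a short ``exceptional'' portion of the embedding that covers all of $S\cup T$ together with the low-degree vertices of $A\cup B$ (using sink/source vertices when $\sigma(C)$ is large, long runs when it is small), embed the long $S$- and $T$-stretches by Proposition~\ref{prop:completepath}(i), control the $A$/$B$ imbalance by the repeated-$A$/repeated-$B$ count of~\eqref{eqn:repeats}, and finish with a Hamilton path of the almost-complete bipartite graph $G[A',B']$ via Proposition~\ref{prop:completepath}(ii). There are two minor organizational differences from the paper worth flagging. First, the paper's cyclic decomposition keeps the $S$-stretch and $T$-stretch adjacent (separated only by a link of length~$\le 3$ through a single $A$- or $B$-vertex) and places the entire bipartite stretch on the other side, whereas you place $P_{AB}$ between $P_S$ and $P_T$; both orderings are consistent with the rotational pattern $A\to S\to B\to T\to A$, and yours is workable, but keeping $P_S$ and $P_T$ adjacent lets the paper manage the parity bookkeeping (\eqref{eqn:parity1}, \eqref{eqn:parity2}) and the balancing in a single exceptional cover rather than across three separate link regions. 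Second, the paper does not need an ``$ABST$-analogue of Proposition~\ref{prop:balance}'': it reuses Proposition~\ref{prop:dedges} verbatim (this is foreshadowed in the paper just before that proposition) to extract $b-a+1$ edges of $E(T,S\cup B)\cup E(B,S)$, and in the many-sinks case balances instead by the choice of the sink/source target sets $S_A,S_B,T_A,T_B$. Also a small notational slip: the sink threshold in this section is $\eps_2$, not $\eps_4$ (which sits above $\tau$ in the hierarchy and so is not available here), and the non-antidirected hypothesis is actually consumed inside the construction of the short linking/absorbing paths near the $S$--$T$ junction (Cases 1--3 of the many-sinks lemma), not directly as a parity constraint on the bipartite stretch.
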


We will again split the proof into two cases, depending on how many changes of direction $C$ contains.  In both cases, the first step is to find an exceptional cover (defined in Section~\ref{sec:AB}) which uses only a small number of vertices from $A\cup B$.

\subsection{Finding an exceptional cover when $C$ has few sink vertices, $\sigma(C) <\eps_2n$}\label{sec:ABST1}

The following lemma allows us to find an exceptional cover when $C$ is close to being consistently oriented. The two main components of the exceptional cover are a path $P_S\subseteq G[S]$ covering most of the vertices in $S$ and another path $P_T\subseteq G[T]$ covering most of the vertices in $T$. We are able to find $P_S$ and $P_T$ because $G[S]$ and $G[T]$ are almost complete. A shorter path follows which uses long runs (recall that a long run is a consistently oriented path of length $20$) and a small number of vertices from $A\cup B$ to cover any remaining vertices in $S\cup T$. We use edges found by Proposition~\ref{prop:dedges} to control the number of repeated~$A$s and $B$s on this path.

\begin{lemma}\label{lem:ABST1}
Suppose $1/n \ll \eps \ll \eps_1 \ll \eps_2\ll \eta_1 \ll \tau \ll 1$.
Let $G$ be a digraph on $n$ vertices with $\delta^0(G) \geq n/2$. Suppose $A, B, S, T$ is a partition of $V(G)$ satisfying (R\ref{R*})--(R\ref{R9}). Let $C$ be an oriented cycle on $n$ vertices. If $\sigma(C) <\eps_2n$, then $G$ has an exceptional cover $P$ such that $|V(P)\cap(A\cup B)|\leq 2\eta_1^2 n$.
\end{lemma}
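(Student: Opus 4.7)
The plan is to build the exceptional cover $P$ as (roughly) $a_1 Q_S L Q_T a_2$, where $a_1, a_2 \in A$ are bookend vertices, $Q_S$ is an almost-spanning path of $G[S]$, $Q_T$ is an almost-spanning path of $G[T]$, and $L$ is a short connector through $A \cup B$ that absorbs the few remaining (low-degree) vertices of $S \cup T$ and corrects the $A/B$ imbalance forced by (EC\ref{EC3}). Because $G[S]$ and $G[T]$ look like nearly complete digraphs by (R\ref{R8})--(R\ref{R9}), Proposition~\ref{prop:completepath}(i) will give us Hamilton paths of any orientation in them, so the long parts are easy; the real work is coordinating $L$.

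First I pick the subpath $P' \subseteq C$ that will be embedded as $P$. Since $\sigma(C) < \eps_2 n$, the cycle has fewer than $2\eps_2 n$ turning points, so most of $C$ is covered by long runs; in particular I may choose $P'$ of length $s + t + O(\eps_1 n)$ of the form $R_S L_0 R_T$ where $R_S$ has length $|S| + O(\eps_1 n)$, $R_T$ has length $|T| + O(\eps_1 n)$, and $L_0$ has length $O(\eps_1 n)$ while still containing $\Theta(\eps_1 n)$ disjoint long runs (so along $L_0$ there are many places where constant-length detours of either orientation can be slotted in). Setting $d := b - a \leq \eps_1 n$ (from (R\ref{R2})) I next apply Proposition~\ref{prop:dedges} to obtain a set $M$ of $d$ edges in $E(T, S \cup B) \cup E(B, S)$. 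These ``atypical'' edges will be used inside $L$ to reach the $d$ extra vertices of $B$ that the balance condition requires.

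For the main embeddings, let $S_0 \subseteq S$ and $T_0 \subseteq T$ be the large ``good'' sets from (R\ref{R8}) and (R\ref{R9}), so $|S \setminus S_0|, |T \setminus T_0| \leq \eps_1 n$; the induced digraphs $G[S_0]$ and $G[T_0]$ have minimum semidegree well above $7|S_0|/8$ and $7|T_0|/8$ (using $\eps \ll \eps_1 \ll \tau$), so by Proposition~\ref{prop:completepath}(i) they admit Hamilton paths of any prescribed orientation between any prescribed pair of endvertices. I use this to embed $R_S$ and $R_T$ as $Q_S \subseteq G[S_0]$ and $Q_T \subseteq G[T_0]$, with endvertices chosen in advance to attach to $L$. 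The connector $L$ is then built greedily inside the copy of $L_0$: each bad vertex $x \in S \setminus S_0$ is inserted via an $A$- or $B$-detour of length at most $3$, using (R\ref{R4}) (it has at least $\eta_1 n$ in/out-neighbours in $A \cup S$); symmetrically for $T \setminus T_0$ via (R\ref{R5}); each edge of $M$ contributes a further $O(1)$ vertices to $L$; and the remaining alternating $A$--$B$ pattern uses only edges guaranteed by (R\ref{R3})--(R\ref{R7}). Finally $A$-bookends $a_1, a_2$ are attached using $A \to S$ and $T \to A$ edges from (R\ref{R4})--(R\ref{R5}). The total number of $A \cup B$-vertices used is $O(\eps_1 n) + O(1) \leq 2\eta_1^2 n$, since $\eps_1 \ll \eta_1$, and the choice of exactly $d$ atypical edges from $M$ forces the imbalance between $|V(P) \cap A|$ and $|V(P) \cap B|$ to land on the value dictated by (EC\ref{EC3}).

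The main obstacle is this greedy construction of $L$: we must simultaneously respect the orientation of $L_0$ at every edge, hit the prescribed endvertices (the ends of $Q_S$ and $Q_T$), cover all $O(\eps_1 n)$ remaining exceptional vertices together with the edges of $M$, and stay within the $2\eta_1^2 n$ budget. What saves us is that $L_0$ has length $O(\eps_1 n)$ and contains $\Theta(\eps_1 n)$ disjoint long runs, giving much more room and directional flexibility than is needed for the $O(\eps_1 n)$ inserts. The orientation of each insert (forward or backward, and whether it routes through $A$ or $B$) can be chosen to align with whichever long run of $L_0$ it is placed inside, using the dense patterns $A \to S$, $S \to B$, $B \to T$, $T \to A$ and their reverses given by (R\ref{R3})--(R\ref{R7}).
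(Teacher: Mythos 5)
Your overall strategy matches the paper's: split a long subpath of $C$ into a segment to embed in $G[S]$, a segment for $G[T]$, and short connector segments routed through $A\cup B$; use Proposition~\ref{prop:dedges} to supply ``atypical'' edges that create repeated $B$s; and close the two long segments with Proposition~\ref{prop:completepath}(i). The only structural difference is cosmetic: you place a single connector $L$ between $Q_S$ and $Q_T$, whereas the paper uses a very short bridge $Q$ between the $T$- and $S$-segments plus a longer cleanup segment $P_1$ at the end. Either arrangement could be made to work in principle.

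The genuine gap is the balance argument for (EC\ref{EC3}). By \eqref{eqn:repeats} you need $\text{rep}(B)-\text{rep}(A)=d$ exactly, and you assert that ``the choice of exactly $d$ atypical edges from $M$'' achieves this. But that ignores repeated $A$s forced by the orientation of $C$ at the junctions. Concretely: between the endvertex $a_1\in A$ and the first $A\cup B$-vertex of your connector, the only vertices traversed are the $S$-vertices of $Q_S$; so if that first connector vertex lands in $A$ rather than $B$, you get an extra repeated $A$. Whether it lands in $A$ or $B$ is \emph{not} your choice: the edge of $C$ leaving $Q_S$ may be a backward edge, and then the natural (dense) exit is an $A\to S$ edge (from (R\ref{R8})), not an $S\to B$ edge --- you cannot rely on $B\to S$ edges, which are sparse in this structure. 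The same issue arises at the $L$-to-$Q_T$ junction. So depending on the orientations of these two boundary edges, $P^G$ picks up $0$ or $1$ extra repeated $A$, and correspondingly you must use $d$ or $d+1$ atypical edges. This is exactly what Table~\ref{table1} in the paper records, and why the proof takes $M$ of size $d+1$ from Proposition~\ref{prop:dedges} (not $d$, as you write) and then selects $M'\subseteq M$ of size $d$ or $d+1$ depending on whether both boundary edges are forward. Without that case analysis, your count $\text{rep}(B)-\text{rep}(A)=d$ can be off by one and (EC\ref{EC3}) fails. This is the core technical content of the lemma and needs to be spelled out, not assumed.

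A smaller point: even granting the right number of edges in $M$, you still need to verify that the specific forms you want (e.g.\ inserting a $T\to B$ edge as a subpath of form $B(T)^jB$, inserting a vertex of $S\setminus S_0$ as $A(S)^jB$, etc.) are realisable with the orientations that $L_0$ demands at those positions. Your appeal to ``many long runs of either orientation inside $L_0$'' is the right instinct, but the paper has to fix the orientation of $C$ so that the majority of edges are forward, choose the bridge $Q$ to be a specific forward path of length two immediately followed by a backward edge, and only then prove the required forms exist from (R\ref{R4}), (R\ref{R5}), (R\ref{R8}), (R\ref{R9}). As written, your argument does not establish that the orientation constraints and the balance constraint can be satisfied simultaneously.
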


\begin{proof}
Let $s^*:=s-\lceil\eps_2 n\rceil$ and $d:=b-a$. Define $S'\subseteq S$ to consist of all vertices $x\in S$ with $d^+_{B\cup S}(x) \geq b+s-\eps^{1/3}n$ and $d^-_{A\cup S}(x)\geq a+s-\eps^{1/3}n$. Define $T'\subseteq T$ similarly. Note that $|S\setminus S'|, |T\setminus T'| \leq \eps_1n$ by (R\ref{R8}) and (R\ref{R9}).

We may assume that the vertices of $C$ are labelled so that the number of forward edges is at least the number of backward edges. Let $Q\subseteq C$ be a forward path of length two, this exists since $\sigma(C)<\eps_2 n$. If $C$ is not consistently oriented, we may assume that $Q$ is immediately followed by a backward edge. Define $e_1, e_2, e_3\in E(C)$ such that $d_C(e_1,Q)=s^*$, $d_C(Q, e_2)=s^*+1$, $d_C(Q,e_3)=2$. Let $P_0:=(e_1Ce_2)$. 

If at least one of $e_1, e_2$ is a forward edge, define paths $P_T$ and $P_S$ of order $s^*$ so that $P_0=(e_1P_TQP_Se_2)$. In this case, map $Q$ to a path $Q^G$ in $G$ of the form $T'AS'$. If $e_1$ and $e_2$ are both backward edges, our choice of $Q$ implies that $e_3$ is also a backward edge. Let $P_T$ and $P_S$ be defined so that $P_0=(e_1P_TQe_3P_Se_2)$. So $|P_T|=s^*$ and $|P_S|=s^*-1$. In this case, map $(Qe_3)$ to a path $Q^G$ of the form $T'ABS'$.

Let $p_T:=|P_T|$ and $p_S:=|P_S|$. Our aim is to find a copy $P_0^G$ of $P_0$ which maps $P_S$ to $G[S]$ and $P_T$ to $G[T]$. We will find $P_0^G$ of the form $F$ as given in Table~\ref{table1}.
\begin{table}[h]
	\begin{tabular}{|c|c|c|c|c|}
	\hline	
	$e_1$& forward  & forward & backward & backward\\
	$e_2$& forward & backward & forward & backward\\
	\hline
	\rule{0pt}{11pt} $F$ & $B(T)^{p_T}A(S)^{p_S}B$ & $B(T)^{p_T}A(S)^{p_S}A$ & $A(T)^{p_T}A(S)^{p_S}B$ & $A(T)^{p_T}AB(S)^{p_S}A$\\
	\hline
	\end{tabular}
	\caption{Proof of Lemma~\ref{lem:ABST1}: $P_0^G$ has form $F$.}\label{table1}
\end{table}
Let $M$ be a set of $d+1$ edges in $E(T,B \cup S) \cup E(B, S)$ guaranteed by Proposition~\ref{prop:dedges}. We also define a subset $M'$ of $M$ which we will use to extend $P_0^G$ to an exceptional cover. If $e_1, e_2$ are both forward edges, choose $M' \subseteq M$ of size $d$. Otherwise let $M':=M$. Let $d':=|M'|$. Let $M_1'$ be the set of all edges in $M'$ which are disjoint from all other edges in $M'$ and let $d_1':=|M_1'|$. So $M'\setminus M_1'$ consists of $(d'-d_1')/2=:d_2'$ disjoint consistently oriented paths of the form $TBS$.

We now fix copies $e_1^G$ and $e_2^G$ of $e_1$ and $e_2$. If $e_1$ is a forward edge, let $e_1^G$ be a $BT'$-edge, otherwise let $e_1^G$ be a $T'A$-edge. If $e_2$ is a forward edge, let $e_2^G$ be a $S'B$-edge, otherwise let $e_2^G$ be an $AS'$-edge. Let $t_1$ be the endpoint of $e_1^G$ in $T'$, $s_2$ be the endpoint of $e_2^G$ in $S'$ and let $t_2\in T'$ and $s_1\in S'$ be the endpoints of $Q^G$. Let $v$ be the final vertex of $e_2^G$ and let $X\in \{A,B\}$ be such that $v\in X$.

We now use (R\ref{R4}), (R\ref{R5}), (R\ref{R8}) and (R\ref{R9}) to find a collection $\cP$ of at most $3\eps_1n+1$ disjoint, consistently oriented paths which cover the edges in $M'$ and the vertices in $S\setminus S'$ and $T\setminus T'$. $\cP$ uses each edge $e\in M_1'$ in a forward path $P_e$ of the form $B(S\cup T)^jB$ for some $1\leq j\leq 4$ and $\cP$ uses each path in $M'\setminus M_1'$ in a forward path of the form $BT^jBS^{j'}B$ for some $1\leq j,j'\leq 4$. The remaining vertices in $S\setminus S'$, $T\setminus T'$ are covered by forward paths in $\cP$ of the form $A(S)^jB$ or $B(T)^jA$, for some $1\leq j \leq 3$.

Let $S''\subseteq S \setminus (V(\cP)\cup \{s_1, s_2\})$ and $T''\subseteq T \setminus (V(\cP)\cup \{t_1, t_2\})$ be sets of size at most $2\eps_2n$ so that $|S''|+p_S=|S\setminus V(\cP)|$ and $|T''|+p_T=|T\setminus V(\cP)|$. Note that $S''\subseteq S'$ and $T''\subseteq T'$. So we can cover the vertices in $S''$ by forward paths of the form $ASB$ and we can cover the vertices in $T''$ by forward paths of the form $BTA$. Let $\cP'$ be a collection of disjoint paths thus obtained. Let $P_1$ be the subpath of order $\eta_1^2n$ following $P_0$ on $C$. Note that $P_1$ contains at least $\sqrt{\eps_2}n$ disjoint long runs. Each path in $\cP\cup \cP'$ will be contained in the image of such a long run. (Each forward path in $\cP\cup \cP'$ might be traversed by $P_1^G$ in a forward or backward direction, for example, a forward path of the form $BT^jBS^{j'}B$ could appear in $P_1^G$ as a forward path of the form $BT^jBS^{j'}B$ or a backward path of the form $BS^{j'}BT^jB$.) So we can find a copy $P_1^G$ of $P_1$ starting from $v$ which uses $\cP\cup \cP'$ and has the form 
$$X*AX_1X_2\dots X_{d_1'}Y_1Y_2\dots Y_{d_2'}Z_1Z_2\dots Z_\ell B*Y$$ for some $\ell\geq 0$ and $Y\in \{A,B\}$, where 
\begin{align*}
X_i&\in \{B(S\cup T)^jB*A: 1\leq j\leq 4\},\\
Y_i&\in \{B(S\cup T)^jB(S\cup T)^{j'}B*A: 1\leq j,j'\leq 4\}\hspace{6pt}\text{ and}\\
Z_i&\in\{BA(S\cup T)^jB*A, B(S\cup T)^jA*A:1\leq j\leq 3\}.
\end{align*}

Let $S^*$ be the set of uncovered vertices in $S$ together with the vertices $s_1, s_2$ and let $T^*$ be the set of uncovered vertices in $T$ together with $t_1$ and $t_2$. Write $G_S:=G[S^*]$ and $G_T:=G[T^*]$. Now
$\delta^0(G_T)\geq t-\sqrt{\eps_2}n \geq 7|G_T|/8$ and so $G_T$ has a Hamilton path from $t_1$ to $t_2$ which is isomorphic to $P_T$, by Proposition~\ref{prop:completepath}(i). Similarly, we find a path isomorphic to $P_S$ from $s_1$ to $s_2$ in $G_S$. Altogether, this gives us the desired copy $P_0^G$ of $P_0$ in $G$. Let $P^G:=P_0^GP_1^G$.

We now check that $P^G$ forms an exceptional cover. Clearly (EC1) holds and we may assume that $P^G$ has both endvertices in $A$ (by extending the path if necessary) so that (EC2) is also satisfied. For (EC\ref{EC3}), observe that $P_1^G$ contains exactly $d_1'+2d_2'=d'$ repeated $B$s, these occur in the subpath of the form $X_1X_2\dots X_{d_1'}Y_1Y_2\dots Y_{d_2'}$ covering the edges in $M'$. If $e_1$ and $e_2$ are both forward edges, then, consulting Table~\ref{table1}, we see that $P_0^G$ has no repeated $A$s and that there are no other repeated $A$s or $B$s in $P^G$. Recall that in this case $d'=d$, so \eqref{eqn:repeats} gives $|B\setminus V(P^G)|-|A\setminus V(P^G)| = d-d'+1=1.$ If at least one of $e_1, e_2$ is a backward edge, using Table~\ref{table1}, we see that there is one repeated $A$ in $P_0^G$ and there are no other repeated $A$s or $B$s in $P^G$. In this case, we have $d'=d+1$, so \eqref{eqn:repeats} gives $|B\setminus V(P^G)|-|A\setminus V(P^G)|=d-d'+1+1=1$. Hence $P^G$ satisfies (EC3) and forms an exceptional cover. Furthermore, $|V(P^G)\cap (A\cup B)| \leq 2\eta_1^2 n$.
\end{proof}

\subsection{Finding an exceptional cover when $C$ has many sink vertices, $\sigma(C) \geq \eps_2n$}

In Lemma~\ref{lem:ABST2}, we find an exceptional cover when $C$ contains many sink vertices. The proof will use the following result which allows us to find short $AB$- and $BA$-paths of even length. We will say that an $AB$- or $BA$-path $P$ in $G$ is \emph{useful} if it has no repeated $A$s or $B$s and uses an odd number of vertices from $S\cup T$.

\begin{prop}\label{prop:ABST2links}
Suppose $1/n \ll \eps \ll \eps_1\ll \eta_1 \ll \tau \ll 1.$
Let $G$ be a digraph on $n$ vertices with $\delta^0(G) \geq n/2$. Suppose $A, B, S, T$ is a partition of $V(G)$ satisfying (R\ref{R*})--(R\ref{R9}). Let $L_1$ and $L_2$ be oriented paths of length eight. Then $G$ contains disjoint copies $L_1^G$ and $L_2^G$ of $L_1$ and $L_2$ such that each $L_i^G$ is a useful path. Furthermore, we can specify whether $L_i^G$ is an $AB$-path or a $BA$-path.
\end{prop}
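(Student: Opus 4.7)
The plan is to construct $L_1^G$ first and then $L_2^G$ disjoint from it; since $|V(L_i^G)| = 9$, the second step is essentially identical to the first after removing the nine vertices of $L_1^G$. Throughout I would work inside the typical sets $A' \subseteq A$, $B' \subseteq B$, $S' \subseteq S$, $T' \subseteq T$ from (R\ref{R6})--(R\ref{R9}), each missing at most $\eps_1 n$ vertices of its class. By (R\ref{R3}) and (R\ref{R6})--(R\ref{R9}), $G[A', B']$ is a near-complete bipartite digraph and $G[S'], G[T']$ are near-complete digraphs, so Proposition~\ref{prop:completepath}(i) provides Hamilton paths of arbitrary orientation inside them. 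Any length-$8$ $AB$- or $BA$-path with no repeated $A$s or $B$s is automatically useful, since the $A \cup B$-subsequence alternates and has even length, forcing the number of $S \cup T$-vertices to be odd.

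Suppose first that $L_1$ is not antidirected; it then contains two consecutive edges of the same orientation. Let $k$ be the middle position of such a consistent $2$-edge subpath. I would place a single vertex $v \in S' \cup T'$ at position $k$, choosing $S'$ versus $T'$ according to the parity of $k$ and the common direction of the two edges so that the pattern at $v$ (either $A \to v \to B$ or $B \to v \to A$) matches the typical in- and out-neighbourhoods from (R\ref{R8})/(R\ref{R9}). The remaining eight positions alternate between $A'$ and $B'$ and are embedded greedily using the near-completeness of $G[A', B']$.

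The delicate case is antidirected $L_1$. A chain argument using only the ``good'' edges (namely $A \leftrightarrow B$, $A \to S \to B$, $B \to T \to A$, and edges within $S$ or within $T$) propagates neighbourhood constraints along $L_1$ and forces the contradictory requirement $u_8 \in (B \cup S) \cap (A \cup T) = \emptyset$, so $L_1^G$ must use at least one ``atypical'' edge from $\{A \to T,\, B \to S,\, S \to A,\, T \to B,\, S \to T,\, T \to S\}$. Such edges are forced to exist: the identity $(a+s) + (a+t) + (b+s) + (b+t) = 2n$ is incompatible with all four inequalities $a+s,\, a+t,\, b+s,\, b+t \geq n/2+1$ (the conditions under which a purely good graph could satisfy $\delta^0(G) \geq n/2$, since e.g.\ a purely good $S$-vertex has $d^+ \leq b+s-1$), so at least one of them must fail. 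The failing inequality translates, via the $\delta^0(G) \geq n/2$ bound summed over the corresponding class, into a total atypical-edge count of $\Omega(\tau n)$; by pigeonhole some specific type in the list above is then abundant. Based on which type this is, I would either place a single $S'$- or $T'$-vertex at a suitable position of $L_1$ so that the abundant atypical edge is incident to it, or (for $S \to T$ and $T \to S$) place three consecutive $S/T$-vertices in the middle of $L_1$ and absorb the contradiction with the atypical edge as the single transition. The remaining $A \cup B$-positions are filled greedily in $G[A', B']$.

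The main obstacle is the antidirected case: the structural good edges alone are insufficient, and one must locate a specific atypical edge whose existence is forced only through the $\delta^0(G) \geq n/2$ constraint and the near-balanced sizes given by (R\ref{R2}).
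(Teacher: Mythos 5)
Your proof is essentially correct and follows the same overall strategy as the paper: observe that a length-$8$ $AB$- or $BA$-path with no repeated $A$s or $B$s is automatically useful; in the non-antidirected case insert a single $S$- or $T$-vertex into a consistent two-edge subpath using (R\ref{R8})/(R\ref{R9}); in the antidirected case prove via a parity/chain argument that at least one ``atypical'' edge is unavoidable, show such edges exist in abundance, and build the path around one. The paper establishes the atypical edge slightly differently --- it observes $a+s\le n/2$ directly from (R\ref{R*}) and, in the boundary case $a+s=n/2$, invokes K\"onig's theorem to get two disjoint edges in $E(B\cup T,S')\cup E(A\cup S,T')$; your degree-sum/pigeonhole count gives $\Omega(\tau n)$ atypical edges of some type and is arguably cleaner, though you should note that (R\ref{R*}) forces the failing inequality to be $a+s$, so only the types $B\to S$ and $T\to S$ (resp.\ the symmetric ones for $T$) actually arise, which shortens your case analysis. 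Two small points to tighten in a full write-up: (i) your ``typical'' $S'$ comes from (R\ref{R8}) while the paper's $S'$ is defined by $d^{\pm}_S(x)\ge\eta_1 n/2$; either works, but you must ensure the endpoint of the chosen atypical edge is also typical (there are enough atypical edges and few atypical vertices, so this is fine); and (ii) Proposition~\ref{prop:completepath}(i) is not actually needed here --- you are embedding a path of length eight greedily, not a Hamilton path.
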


\begin{proof}
Define $S'\subseteq S$ to be the set consisting of all vertices $x\in S$ with $d^\pm_{S}(x)\geq \eta_1 n/2$. Define $T'\subseteq T$ similarly. Note that $|S\setminus S'|, |T\setminus T'|\leq \eps_1n$ by (R\ref{R8}) and (R\ref{R9}). We claim that $G$ contains disjoint edges $e,f\in E(B\cup T,S')\cup E(A\cup S, T')$. Indeed, if $a+s<n/2$ it is easy to find disjoint $e,f\in E(B\cup T,S')$, since $\delta^0(G)\geq n/2$. Otherwise, we must have $a+s=b+t=n/2$ and so each vertex in $S'$ has at least one inneighbour in $B\cup T$ and each vertex in $T'$ has at least one inneighbour in $A\cup S$. Let $G'$ be the bipartite digraph with vertex classes $A\cup S$ and $B\cup T$ and all edges in $E(B\cup T,S')\cup E(A\cup S, T')$. The claim follows from applying K\"onig's theorem to the underlying undirected graph of $G'$.

We demonstrate how to find a copy $L_1^G$ of $L_1$ in $G$ which is an $AB$-path. The argument when $L_1^G$ is a $BA$-path is very similar.%
\COMMENT{For the $BA$-path case, relabel the vertex classes $(A,B,S,T)$ by $(B,A,T,S)$. Note that $G$ with this new labelling satisfies (R\ref{R1})--(R\ref{R9}) so we can find $L_1^G$ as above.}
$L_1^G$ will have the form $A*B(T)^i(S)^j(T)^kA*B$ or $A*A(T)^i(S)^j(T)^kB*B$, for some $i,j,k\geq 0$ such that $i+j+k$ is odd. Note then that $L_1^G$ will have no repeated $A$s or $B$s.

First suppose that $L_1$ is not antidirected, so $L_1$ has a consistently oriented subpath $L'$ of length two. We will find a copy of $L_1$, using (R\ref{R8})--(R\ref{R9}) to map $L'$ to a forward path of the form $ASB$ or $BTA$ or a backward path of the form $BSA$ or $ATB$. More precisely, if $L'$ is a forward path, let $L_1^G$ be a path of the form $A*ASB*B$ if $d_C(L_1, L')$ is even and a path of the form $A*BTA*B$ if $d_C(L_1, L')$ is odd. If $L'$ is backward, let $L_1^G$ be a path of the form $A*ATB*B$ if $d_C(L_1, L')$ is even and a path of the form $A*BSA*B$ if $d_C(L_1, L')$ is odd.

Suppose now that $L_1$ is antidirected. We will find a copy $L_1^G$ of $L_1$ which contains $e$. If $e\in E(B,S')$, we use (R\ref{R8}) and the definition of $S'$ to find a copy of $L_1$ of the following form.  If the initial edge of $L_1$ is a forward edge, we find $L_1^G$ of the form $A(S)^3B*B$.  If the initial edge is a backward edge, we find $L_1^G$ of the form $AB(S)^3A*B$. If $e\in E(A,T')$  we will use (R\ref{R9}) and the definition of $T'$ to find a copy of $L_1$ of the following form. If the initial edge of $L_1$ is a forward edge, we find $L_1^G$ of the form $A(T)^3B*B$.  If the initial edge is a backward edge, we find $L_1^G$ of the form $AB(T)^3A*B$.

If $L_1$ is antidirected and $e\in E(T,S')$, we will use (R\ref{R3}), (R\ref{R5}), (R\ref{R8}), (R\ref{R9}) and the definition of $S'$ to find a copy of $L_1$ containing $e$. If the initial edge of $L_1$ is a forward edge, find $L_1^G$ of the form $AB(S)^2(T)^{2h-1}A*B$, where $1\leq h\leq 2$. If the initial edge is a backward edge, find $L_1^G$ of the form $A(T)^{2h-1}(S)^2B*B$, where $1\leq h\leq 2$. Finally, we consider the case when $e\in E(S,T')$. If the initial edge of $L_1$ is a forward edge, we find $L_1^G$ of the form $AB(S)^{2h-1}(T)^2A*B$, where $1\leq h\leq 2$. If the initial edge of $L_1$ is a backward edge, we find $L_1^G$ of the form $A(T)^2(S)^{2h-1}B*B$, where $1\leq h\leq 2$.

We find a copy $L_2^G$ of $L_2$ (which is disjoint from $L_1^G$) in the same way, using the edge $f$ if $L_2$ is an antidirected path.
\end{proof}

As in the case when there were few sink vertices, we will map long paths to $G[S]$ and $G[T]$. It will require considerable work to choose these paths so that $G$ contains edges which can be used to link these paths together and so that we are able to cover the remaining vertices in $S\cup T$ using sink and source vertices in a ``balanced'' way. In many ways, the proof is similar to the proof of Lemma~\ref{lem:excover2}. In particular, we will use Proposition~\ref{prop:sinksource} to map sink and source vertices to some vertices in $S\cup T$.

\begin{lemma}\label{lem:ABST2}
Suppose $1/n \ll \eps \ll \eps_1 \ll \eps_2\ll \eta_1 \ll \tau \ll 1.$
Let $G$ be a digraph on $n$ vertices with $\delta^0(G) \geq n/2$. Suppose $A, B, S, T$ is a partition of $V(G)$ satisfying (R\ref{R*})--(R\ref{R9}). Let $C$ be an oriented cycle on $n$ vertices which is not antidirected. If $\sigma(C) \geq \eps_2n$, then $G$ has an exceptional cover $P$ such that $|V(P)\cap(A\cup B)|\leq 5\eps_2n$.
\end{lemma}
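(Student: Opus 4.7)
The plan is to mimic the structure of Lemma~\ref{lem:ABST1}: we assemble the exceptional cover from two long arcs of $C$ embedded into the near-cliques $G[S]$ and $G[T]$, a short ``correction'' segment $R$ of $C$, and two length-$8$ transition subpaths $L_1,L_2$ linking the $S$- and $T$-blocks through $A\cup B$. Unlike in Lemma~\ref{lem:ABST1}, we cannot assume the existence of long consistently oriented subpaths of $C$, so the role played there by forward runs is here taken over by the sink/source-based covers of Proposition~\ref{prop:sinksource} together with a matching produced by Proposition~\ref{prop:dedges}, exactly as in Lemma~\ref{lem:excover2}.

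First I would let $S'\subseteq S$ and $T'\subseteq T$ consist of those vertices meeting the degree bounds of (R\ref{R8}) and (R\ref{R9}), so that $|S\setminus S'|,|T\setminus T'|\leq \eps_1 n$, and set $d:=b-a$. Using $\sigma(C)\geq \eps_2 n$, I would locate a subpath of $C$ of length at most $\eps_2 n$ containing enough sinks to admit a useful tripartition $P_1,P_2,P_3$ with link $L\subseteq P_2$ as required by Proposition~\ref{prop:sinksource}, and choose two further disjoint consistently oriented subpaths $L_1,L_2\subseteq C$ of length~$8$ at suitable positions; the two remaining arcs of $C$ are then earmarked for embedding into $G[S^*]$ and $G[T^*]$, where $S^*,T^*$ consist of the uncovered parts of $S',T'$ together with the splice endpoints. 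Apply Proposition~\ref{prop:dedges} with parameter $d$ to obtain a matching of $d+1$ edges in $E(T,S\cup B)\cup E(B,S)$ and, as in the proof of Lemma~\ref{lem:linking2}, absorb $d$ of these edges into short paths of the forms $SBS$, $TBT$, $SB(T)^3S$ or $SB(T)^3BS$ inside $R$, producing exactly $d$ repeated $B$s and no repeated $A$s. Proposition~\ref{prop:ABST2links} then supplies disjoint useful copies $L_1^G,L_2^G$ of $L_1,L_2$ of the required $AB$/$BA$ types (contributing no net repeated-letter imbalance), and Proposition~\ref{prop:sinksource} embeds $R$ as $R^G$ so that its sinks and sources cover $(S\setminus S')\cup(T\setminus T')$ by balanced paths of the forms $ASA,BSB,ATA,BTB$, incorporating $L^G$ as a link and the matching-based absorbers.

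Finally, each of $G[S^*]$ and $G[T^*]$ has minimum semidegree at least $7/8$ of its order (since we have removed only $O(\eps_2 n)$ vertices from $S',T'$), so Proposition~\ref{prop:completepath}(i) embeds the two long arcs of $C$ as Hamilton paths of any prescribed orientation between the prescribed splice endpoints, yielding a copy $P$ of a subpath of $C$ which covers $S\cup T$ and uses at most $5\eps_2 n$ vertices from $A\cup B$ in total. Appending at most one extra $A$-vertex at either end secures (EC\ref{EC2}). The main obstacle is verifying (EC\ref{EC3}): the long arcs contribute nothing to $\text{rep}(B)-\text{rep}(A)$, $L_1^G,L_2^G$ are useful and hence balanced, and the sink/source coverage of $(S\setminus S')\cup (T\setminus T')$ is designed to be balanced, so by (\ref{eqn:repeats}) it suffices to arrange $\text{rep}(B)-\text{rep}(A)=d$ within $R^G$, which is exactly what the matching absorbers achieve. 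Case splits on the parities and orientations at the splice points (as in Table~\ref{table1}) and on whether $a<b$ or $a=b$ mirror those of Lemma~\ref{lem:excover2}; the hypothesis that $C$ is not antidirected is used exactly in invoking Proposition~\ref{prop:ABST2links}, which requires $C$ to contain a consistently oriented subpath of length two.
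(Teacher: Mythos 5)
Your high-level picture is right -- embed two long arcs of $C$ into the near-cliques $G[S]$, $G[T]$ (via Proposition~\ref{prop:completepath}(i)), link them through $A\cup B$, use sink/source vertices (Proposition~\ref{prop:sinksource}) to cover the leftover exceptional vertices, and track repeated $A$s and $B$s via~\eqref{eqn:repeats}. But two of your load-bearing steps do not go through.

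First, the balancing mechanism you propose does not survive in the many-sink regime. You want to apply Proposition~\ref{prop:dedges} and absorb $d$ matching edges into consistently oriented paths of the forms $SBS$, $TBT$, $SB(T)^3S$, $SB(T)^3BS$. Those absorbers are consistently oriented paths of length $2$ to $6$, and to place them inside the ``correction'' segment $R\subseteq C$ you need $R$ to contain correspondingly many disjoint consistently oriented subpaths. When $\sigma(C)\geq\eps_2 n$ there is no such guarantee -- indeed $C$ may be antidirected on that entire segment, in which case no consistently oriented subpath of length two exists there. This is precisely why the paper does not use Proposition~\ref{prop:dedges} in Lemma~\ref{lem:ABST2} at all: instead, the imbalance $d=b-a$ is absorbed entirely by choosing $|S_A|+|T_A|$ and $|S_B|+|T_B|$ to differ by a quantity $d'$ (Table~\ref{table2}), where $d'-d\in\{-2,0,2\}$ is corrected by the repeated-letter contribution of the short linking segment $P_0^G$. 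That is a genuinely different device from what you wrote, and it is forced on you by the absence of long runs.

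Second, you say you will ``choose two further disjoint consistently oriented subpaths $L_1,L_2\subseteq C$ of length $8$ at suitable positions,'' and you then assert that Proposition~\ref{prop:ABST2links} requires $C$ to contain a consistently oriented subpath of length two, identifying this as where the non-antidirectedness of $C$ enters. Both claims are wrong. Proposition~\ref{prop:ABST2links} explicitly handles \emph{arbitrary} oriented paths $L_1,L_2$ of length eight, including fully antidirected ones (its proof treats the antidirected case separately using the edges $e,f\in E(B\cup T,S')\cup E(A\cup S,T')$). The whole point of that proposition is that you cannot choose the linking subpaths freely: their positions on $C$ are pinned down by the requirement that the two long arcs have lengths matching $|S|$ and $|T|$ up to $O(\sqrt{\eps_1}n)$, so whatever orientation $C$ happens to have at those positions must be accommodated. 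The non-antidirectedness of $C$ is instead used in the case analysis of the paper (specifically to guarantee, in Case~3, that $Q_1\setminus\{e_2\}$ contains a consistently oriented subpath $Q_1'$ of length two), and the delicate three-way case split on whether $\{e_i,f_j\}$ is consistent and on the parity of $n-d_C(e,f)$ is exactly what makes the linking consistent with the parity demanded by~\eqref{eqn:repeats}. Your proposal defers this to ``case splits on the parities and orientations at the splice points,'' but that is where most of the actual difficulty of this lemma lives, and the structure is more intricate than a mirror of Lemma~\ref{lem:excover2}.
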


\begin{proof}
Let $d:=b-a$. Define $S'\subseteq S$ to be the set consisting of all vertices $x\in S$ with $d^\pm_{S}(x)\geq \eta_1 n/2$ and define $T'\subseteq T$ similarly. Let $S'':=S\setminus S'$ and $T'':=T\setminus T'$. Note that $|S''|, |T''| \leq \eps_1n$ by (R\ref{R8}) and (R\ref{R9}). By (R\ref{R4}), all vertices $x\in S''$ satisfy $d^-_A(x)\geq \eta_1 n/2$ or $d^+_B(x) \geq \eta_1 n/2$ and, by (R\ref{R5}), all $x\in T''$ satisfy $d^+_A(x)\geq \eta_1 n/2$ or $d^-_B(x) \geq \eta_1 n/2$. In our proof below, we will find disjoint sets $S_A, S_B \subseteq S$ and $T_A, T_B \subseteq T$ of suitable size such that
\begin{align}
&d^-_A(x)\geq \eta_1 n/2 \text{ for all } x\in S_A\;\text{ and }\;
d^+_B(x)\geq \eta_1 n/2 \text{ for all } x\in S_B;\label{eq:condition1}\\
&d^-_B(x)\geq \eta_1 n/2 \text{ for all } x\in T_B \;\text{ and }\;
d^+_A(x)\geq \eta_1 n/2 \text{ for all } x\in T_A.\label{eq:condition2}
\end{align}
Note that (R\ref{R8}) implies that all but at most $\eps_1n$ vertices from $S$ could be added to $S_A$ or $S_B$ and satisfy the conditions of \eqref{eq:condition1}. Similarly, (R\ref{R9}) implies that all but at most $\eps_1n$ vertices in $T$ are potential candidates for adding to $T_A$ or $T_B$ so as to satisfy \eqref{eq:condition2}. We will write $s_A:=|S_A|$, $s_B:=|S_B|$, $t_A:=|T_A|$ and $t_B:=|T_B|$.

Let $s^*:=s-\lceil\sqrt{\eps_1} n\rceil$ and let $\ell:=2\lceil \eps_2n\rceil -1$. If $C$ contains an antidirected subpath of length $\ell$, let $Q_2$ denote such a path. We may assume that the initial edge of $Q_2$ is a forward edge by reordering the vertices of $C$ if necessary. Otherwise, choose $Q_2$ to be any subpath of $C$ of length $\ell$ such that $Q_2$ contains at least $\eps_1^{1/3} n$ sink vertices and the second vertex of $Q_2$ is a sink. Let $Q_1$ be the subpath of $C$ of length $\ell$ such that $d_C(Q_1,Q_2)=2s^*+\ell$. Note that if $Q_1$ is antidirected then $Q_2$ must also be antidirected. Let $e_1,e_2$ be the final two edges of $Q_1$ and let $f_1,f_2$ be the initial two edges of $Q_2$ (where the edges are listed in the order they appear in $Q_1$ and $Q_2$, i.e., $(e_1e_2)\subseteq Q_1$ and $(f_1f_2)\subseteq Q_2$). Note that $f_1$ is a forward edge and $f_2$ is a backward edge.

Let $Q'$ be the subpath of $C$ of length $14$ such that $d_C(Q', Q_2)=s^*$. If $Q'$ is antidirected, let $Q$ be the subpath of $Q'$ of length $13$ whose initial edge is a forward edge. Otherwise let $Q\subseteq Q'$ be a consistently oriented path of length two. We will consider the three cases stated below.

\medskip

\noindent \textbf{Case 1: }\emph{$Q_1$ and $Q_2$ are antidirected. Moreover, $\{e_2,f_1\}$ is consistent if and only if $n$ is even.}

We will assume that the initial edge of $Q$ is a forward edge, the case when $Q$ is a backward path of length two is very similar.%
\COMMENT{Suppose that $Q$ is a backward path. Map $Q$ to a backward path $Q^G$ of the form $T'BS'$. If $n$ is even, let $e:=e_1$ and, if $n$ is odd, let $e:=e_2$. In both cases, let $f:=f_2$. The assumptions of this case imply that $e$ and $f$ are both backward edges. Define $P$, $P_T$, $P_S$, $p_T$, $p_S$, $q_T$ and $q_S$ as in the main text. We have that $p_S+p_T+q_T+q_S=d_C(e,f)-1\equiv n \mod 2$. So we can choose $S_A, S_B, T_A, T_B$ as in main text.\\
Recall that $Q_1$ is antidirected. So we can find a path $(Q_1e)^G$ isomorphic to $(Q_1e)$ which covers the vertices in $T_A$ by source vertices and the vertices in $T_B$ by sink vertices - parity is OK as $e$ is backward. We choose this path to have the form 
$$X*A(BAT_AA*A)^{t_A}(BT_BB*A)^{t_B}B*AT',$$ where $X\in \{A,B\}$. Observe that $(Q_1e)^G$ has $t_A$ repeated $A$s and $t_B$ repeated $B$s. Find a path $(fQ_2)^G$ isomorphic to $(fQ_2)$ of the form
$$S'A*A(BAS_AA*A)^{s_A}(BS_BB*A)^{s_B}B*B$$
which covers all vertices in $S_A$ by sink vertices and all vertices in $S_B$ by source vertices. $(fQ_2)^G$ has $s_A$ repeated $A$s and $s_B$ repeated $B$s. The rest of the proof is identical to the case when $Q$ is a forward path.}
We will find a copy $Q^G$ of $Q$ which is a $T'S'$-path. If $Q$ is a forward path of length two, map $Q$ to a forward path $Q^G$ of the form $T'AS'$. If $Q$ is antidirected, we find a copy $Q^G$ of $Q$ as follows. Let $Q''$ be the subpath of $Q$ of length eight such that $d_C(Q, Q'')=3$. Recall that a path in $G$ is useful if it has no repeated $A$s or $B$s and uses an odd number of vertices from $S\cup T$. Using Proposition~\ref{prop:ABST2links}, we find a copy $(Q'')^G$ of $Q''$ in $G$ which is a useful $AB$-path. We find $Q^G$ which starts with a path of the form $T'ABA$, uses $(Q'')^G$ and then ends with a path of the form $BAS'$. Let $q_S$ and  $q_T$ be the numbers of interior vertices of $Q^G$ in $S$ and $T$, respectively.

If $n$ is even, let $e:=e_2$ and, if $n$ is odd, let $e:=e_1$. In both cases, let $f:=f_1$. The assumptions of this case imply that $e$ and $f$ are both forward edges. Let $P:=(Q_1CQ_2)$ and let $P_T$ and $P_S$ be subpaths of $C$ which are internally disjoint from $e, f$ and $Q$ and are such that $(eCf)= (eP_TQP_Sf)$. Our plan is to find a copy of $P_T$ in $G[T]$ and a copy of $P_S$ in $G[S]$. Let $p_T:=|P_T|$ and $p_S:=|P_S|$. If $Q$ is a consistently oriented path we have that $q_S,q_T=0$ and $p_S+p_T=d_C(e,f)-1$. If $Q$ is antidirected, then $q_S+q_T$ is odd and $p_S+p_T=d_C(e,f)-12$. So in both cases we observe that
\begin{equation}\label{eqn:parity1}
p_S+p_T+q_S+q_T\equiv d_C(e,f)-1\equiv n\mod 2.
\end{equation}

Choose $S_A, S_B, T_A, T_B$ to satisfy \eqref{eq:condition1} and \eqref{eq:condition2} so that $S''\setminus V(Q^G)\subseteq S_A\cup S_B$, $T''\setminus V(Q^G)\subseteq T_A\cup T_B$, $s=s_A+s_B+p_S+q_S$, $t=t_A+t_B+p_T+q_T$ and $s_A+t_A+d=s_B+t_B$. To see that this can be done, first note that the choice of $s^*$ implies that $s-p_S-q_S \geq \sqrt{\eps_1}n/2>|S''|+d$ and $t-p_T-q_T\geq \sqrt{\eps_1}n/2 > |T''|+d$. Let $r:=s+t-(p_S+p_T+q_S+q_T)$. So $r$ is the number of vertices in $S\cup T$ which will not be covered by the copies of $P_T$, $P_S$ or $Q$. Then \eqref{eqn:parity1} implies that 
$$r\equiv s+t-n \equiv d \mod 2.$$ Thus we can choose the required subsets $S_A, S_B, T_A, T_B$ so that $s_A+t_A=(r-d)/2$ and $s_B+t_B=(r+d)/2$. Note that (R\ref{R2}) and the choice of $s^*$ also imply that $s_A+s_B, t_A+t_B \leq 2\sqrt{\eps_1}n$.

Recall that $Q_1$ is antidirected. So we can find a path $(Q_1e)^G$ isomorphic to $(Q_1e)$ which covers the vertices in $T_A$ by source vertices and the vertices in $T_B$ by sink vertices. We choose this path to have the form%
\COMMENT{Since $e$ is forward and the initial vertex of $e$ is mapped to $B$, the parity is OK, that is, we can map source vertices to $T_A$ and sink vertices to $T_B$.}
$$X*A(BAT_AA*A)^{t_A}(BT_BB*A)^{t_B}B*BT',$$ where $X\in \{A,B\}$. Observe that $(Q_1e)^G$ has $t_A$ repeated $A$s and $t_B$ repeated $B$s. Find a path $Q_2^G$ isomorphic to $Q_2$ of the form
$$S'B*A(BAS_AA*A)^{s_A}(BS_BB*A)^{s_B}B*B$$
which covers all vertices in $S_A$ by sink vertices and all vertices in $S_B$ by source vertices. $Q_2^G$ has $s_A$ repeated $A$s and $s_B$ repeated $B$s. So far, we have been working under the assumption that $Q$ starts with a forward edge. If $Q$ is a backward path, the main difference is that we let $e:=e_1$ if $n$ is even and let $e:=e_2$ if $n$ is odd. We let $f:=f_2$ so that $e$ and $f$ are both backward edges and we map $Q$ to a backward path $Q^G$ of the form $T'BS'$. Then \eqref{eqn:parity1} holds and we can proceed similarly as in the case when $Q$ is a forward path.

We find copies of $P_T$ in $G[T']$ and $P_S$ in $G[S']$ as follows. Greedily embed the first $\sqrt{\eps_1}n$ vertices of $P_T$ to cover all uncovered vertices $x\in T'$ with $d^+_T(x)\leq t-\eps^{1/3}n$ or $d^-_T(x)\leq t-\eps^{1/3}n$. Note that, by (R\ref{R9}), there are at most $\eps_1n$ such vertices. Write $P_T'\subseteq P_T$ for the subpath still to be embedded and let $t_1$ and $t_2$ be the images of its endvertices in $T$. Let $T^*$ denote the sets of so far uncovered vertices in $T$ together with $t_1$ and $t_2$ and define $G_T:=G[T^*]$. We have that $\delta^0(G_T)\geq t-\eps^{1/3}n-3\sqrt{\eps_1}n\geq 7|G_T|/8$, using (R\ref{R1}), and so we can apply Proposition~\ref{prop:completepath}(i) to find a copy of $P_T'$ in $G_T$ with the desired endpoints. In the same way, we find a copy of $P_S$ in $G[S']$. Together with $Q^G$, $(Q_1e)^G$ and $Q_2^G$, this gives a copy $P^G$ of $P$ in $G$ such that $|V(P^G)\cap(A\cup B)|\leq 5\eps_2n$.

The path $P^G$ satisfies (EC\ref{EC1}) and we may assume that (EC\ref{EC2}) holds, by extending the path by one or two vertices, if necessary, so that both of its endvertices lie in $A$. Let us now verify (EC\ref{EC3}). All repeated $A$s and $B$s in $P^G$ are repeated $A$s and $B$s in the paths $(Q_1e)^G$ and $Q_2^G$. So in total, $P^G$ has $s_A+t_A$ repeated $A$s and $s_B+t_B$ repeated $B$s. Then \eqref{eqn:repeats} gives that $P^G$ satisfies 
$$|B\setminus V(P^G)|-|A\setminus V(P^G)|=d-(s_B+t_B)+(s_A+t_A)+1=1.$$
 So (EC\ref{EC3}) is satisfied and $P^G$ is an exceptional cover.

\medskip

\noindent \textbf{Case 2: }\emph{There exists $e\in \{e_1,e_2\}$ and $f\in \{f_1,f_2\}$  such that $\{e,f\}$ is consistent and $n-d_C(e,f)$ is even.}

Let $v$ be the final vertex of $f$. Recall the definitions of a useful tripartition and a link from Section~\ref{sec:AB}. Consider a useful tripartition $P_1,P_2,P_3$ of $(vQ_2)$ and let $\cQ_1, \cQ_2, \cQ_3$ be sink/source/sink sets. Let $L\subseteq P_2$ be a link of length eight such that $d_C(v, L)$ is even. If $Q$ is a consistently oriented path, use Proposition~\ref{prop:ABST2links} to find a copy $L^G$ of $L$ which is a useful $BA$-path if $e$ is forward and a useful $AB$-path if $e$ is backward.  Map $Q$ to a path $Q^G$ of the form $T'AS'$ if $Q$ is a forward path and $T'BS'$ if $Q$ is a backward path. If $Q$ is antidirected, let $Q''$ be the subpath of $Q$ of length eight such that $d_C(Q, Q'')=3$. Using Proposition~\ref{prop:ABST2links}, we find disjoint copies $(Q'')^G$ of $Q''$ and $L^G$ of $L$ in $G$ such that $(Q'')^G$ is a useful $AB$-path and $L^G$ is as described above. We find $Q^G$ which starts with a path of the form $T'ABA$, uses $(Q'')^G$ and then ends with a path of the form $BAS'$. Let $q_S$ be the number of interior vertices of $Q^G$ and $L^G$ in $S$ and let $q_T$ be the number of interior vertices of $Q^G$ and $L^G$ in $T$. Note that in all cases, $Q^G$ is a $T'S'$-path with no repeated $A$s or $B$s.

Let $P:=(eCQ_2)$ and let $P_0:=(eCf)$. Define subpaths $P_T$ and $P_S$ of $C$ which are internally disjoint from $Q,e,f$ and are such that $P_0= (eP_TQP_Sf)$. Let $p_T:=|P_T|$ and $p_S:=|P_S|$.  Our aim will be to find a copy $P_0^G$ of $P_0$ which uses $Q^G$ and maps $P_T$ to $G[T]$ and $P_S$ to $G[S]$. $P_0^G$ will have the form $F$ given in Table~\ref{table2}. We fix edges $e^G$ and $f^G$ for $e$ and $f$. If $e$ is a forward edge, then choose $e^G$ to be a $BT'$-edge and $f^G$ to be an $S'B$-edge. If $e$ is a backward edge, let $e^G$ be a $T'A$-edge and $f^G$ be an $AS'$-edge. We also define a constant $d'$ in Table~\ref{table2} which will be used to ensure that the final assignment is balanced.
\begin{table}[h]
	\begin{tabular}{|c|c|c|c|c|}
	\hline	
	Initial edge of $Q$& forward  & forward & backward & backward\\
	$e$& forward & backward & forward & backward\\
	\hline
	\rule{0pt}{11pt}$F$ & $BT^{p_T}\cA S^{p_S}B$& $AT^{p_T}\cA S^{p_S}A$ & $BT^{p_T}BS^{p_S}B$ & $AT^{p_T}BS^{p_S}A$\\
	\hline
	\rule{0pt}{11pt}$d'$ & $d$& $d+2$ & $d-2$ & $d$\\
	\hline
	\end{tabular}
	\caption{Proof of Lemma~\ref{lem:ABST2}, Cases~2 and 3: $P_0^G$ has form $F$, where $\cA$ denotes an $A$-path with no repeated $A$s or $B$s.}\label{table2}
\end{table}
So, if $r_A$ and $r_B$ are the numbers of repeated $A$s and $B$s in $P_0^G$ respectively, we will have $r_A-r_B=d'-d$.

Note that%
\COMMENT{If $Q$ is a consistently oriented path, $p_T+p_S=d_C(e,f)-1$ and $q_T+q_S$ is the number of interior vertices in $L^G$ in $S\cup T$ which is odd. So $p_T+p_S+q_T+q_S \equiv d_C(e,f) \mod 2$. If $Q$ is antidirected, $p_T+p_S=d_C(e,f)-12$. Note that in this case $Q^G$ has an odd number of interior vertices in $S\cup T$ and $q_T+q_S$ is the number of interior vertices in $Q^G$ and $L^G$ in $S\cup T$ which is even. So $p_T+p_S+q_T+q_S \equiv d_C(e,f) \mod 2$.} 
\begin{equation}\label{eqn:parity2}
p_T+p_S+q_T+q_S \equiv d_C(e,f) \equiv n\mod 2.
\end{equation}
The number of vertices in $S\cup T$ which will not be covered by  $P_0^G$ or $L^G$ is equal to $r:=s+t-(p_T+p_S+q_T+q_S)$ and \eqref{eqn:parity2} implies that
$$r\equiv s+t-n\equiv d \equiv d' \mod 2.$$
Also note that the choice of $s^*$ implies that $s-p_S-q_S \geq \sqrt{\eps_1}n/2>|S''|+d'$ and $t-p_T-q_T\geq \sqrt{\eps_1}n/2 > |T''|+d'$.
 Thus we can choose sets $S_A, S_B, T_A, T_B$ satisfying \eqref{eq:condition1} and \eqref{eq:condition2} so that $S''\setminus V(Q^G \cup L^G)\subseteq S_A\cup S_B$, $T''\setminus V(Q^G\cup L^G)\subseteq T_A\cup T_B$, $s=s_A+s_B+p_S+q_S$, $t=t_A+t_B+p_T+q_T$ and $s_A+t_A+d'=s_B+t_B$.%
\COMMENT{Let $s_A+t_A=(r-d')/2$ and $s_B+t_B=(r+d')/2$.} 
(R\ref{R2}) and the choice of $s^*$ imply that $s_A+s_B, t_A+t_B \leq 2\sqrt{\eps_1}n$.  
Recall that $v$ denotes the final vertex of $f$ and let $v^G$ be the image of $v$ in $G$. If $v^G\in A$ (i.e., if $e$ is backward), let $v':=v$ and $(v')^G:=v^G$. If $v^G\in B$, let $v'$ denote the successor of $v$ on $C$. If $vv'\in E(C)$, map $v'$ to an outneighbour of $v^G$ in $A$ and, if $v'v\in E(C)$, map $v'$ to an inneighbour of $v^G$ in $A$. Let $(v')^G$ be the image of $v'$. Then we can apply Proposition~\ref{prop:sinksource}, with $2\sqrt{\eps_1}, \eta_1/2, \tau/2, (v')^G$ playing the roles of $\eps, \eta, \tau, a_1$, to find a copy $(v'Q_2)^G$ of $(v'Q_2)$ which starts at $(v')^G$, covers $S_A, S_B, T_A, T_B$ and contains $L^G$. Note that we make use of \eqref{eq:condition1} and \eqref{eq:condition2} here. We obtain a copy $(vQ_2)^G$ of $(vQ_2)$ (by combining $v^G(v')^G$ with $(v'Q_2)^G$ if $v'\neq v$) which has $s_A+t_A$ repeated $A$s and $s_B+t_B$ repeated $B$s.

We find copies of $P_T$ in $G[T]$ and $P_S$ in $G[S]$ as in Case~1. Combining these paths with $(vQ_2)^G$, $e^G$, $Q^G$ and $f^G$, we obtain a copy $P^G$ of $P$ in $G$ such that $|V(P^G)\cap(A\cup B)|\leq 3\eps_2 n$. The path $P^G$ satisfies (EC\ref{EC1}) and we may assume that (EC\ref{EC2}) holds, by extending the path if necessary to have both endvertices in $A$. All repeated $A$s and $B$s in $P^G$ occur as repeated $A$s and $B$s in the paths $P_0^G$ and $(vQ_2)^G$ so we can use \eqref{eqn:repeats} to see that 
$$|B\setminus V(P^G)|-|A\setminus V(P^G)|= d-(s_B+t_B)+(d'-d)+(s_A+t_A)+1=1.$$
Therefore, (EC\ref{EC3}) is satisfied and $P^G$ is an exceptional cover.

\medskip

\noindent \textbf{Case 3: }\emph{The assumptions of Cases~$1$ and $2$ do not hold.}

Recall that $f_1$ is a forward edge and $f_2$ is a backward edge. Since Case~2 does not hold, this implies that $e_2$ is a forward edge if $n$ is even (otherwise $e:=e_2$ and $f:=f_2$ would satisfy the conditions of Case~2) and $e_2$ is a backward edge if $n$ is odd (otherwise $e:=e_2$ and $f:=f_1$ would satisfy the conditions of Case~2). In particular, since Case~1 does not hold, this in turn implies that $Q_1$ is not antidirected. 
We claim that $Q_1\setminus \{e_2\}$ is not antidirected. Suppose not. Then it must be the case that $\{e_1,e_2\}$ is consistent. If $e_1$ and $e_2$ are forward edges (and so $n$ is even), then $e:=e_1$ and $f:=f_1$ satisfy the conditions of Case~2. If  $e_1$ and $e_2$ are both backward edges (and so $n$ is odd), then $e:=e_1$ and $f:=f_2$ satisfy the conditions of Case~2. Therefore, $Q_1\setminus \{e_2\}$ is not antidirected and must contain a consistently oriented path $Q_1'$ of length two.

Let $e:=e_2$. If $n$ is even, let $f:=f_1$ and, if $n$ is odd, let $f:=f_2$. In both cases, we have that $\{e,f\}$ is consistent.  Let $P:=(Q_1'CQ_2)$ and $P_0:=(ePf)$. Let $P_T$ and $P_S$ be subpaths of $C$ defined such that $P_0=(eP_TQP_Sf)$. Set $p_T:=|P_T|$ and $p_S:=|P_S|$. Our aim is to find a copy $P_0^G$ which is of the form given in Table~\ref{table2}. We also define a constant $d'$ as in Table~\ref{table2}. So if $r_A$ and $r_B$ are the numbers of repeated $A$s and $B$s in $P_0^G$ respectively, then again $r_A-r_B=d'-d$. 

Let $v$ be the final vertex of $f$. Consider a tripartition $P_1, P_2, P_3$ of $(vQ_2)$ and a link $L\subseteq P_2$ of length eight such that $d_C(v,L)$ is even. Proceed exactly as in Case~2 to find copies $Q^G$ and $L^G$ of $Q$ and $L$.
Use (R\ref{R3}), (R\ref{R8}) and (R\ref{R9}) to fix a copy $(Q_1'Ce)^G$ of $(Q_1'Ce)$ which is disjoint from $Q^G$ and $L^G$ and is of the form given in Table~\ref{table3}.
\begin{table}[h]
	\begin{tabular}{|p{3.2cm}|c|c|c|c|}
	\hline	
	\centering{$Q_1'$}& forward  & forward & backward & backward\\
	\centering{$d_C(Q_1',e)$}& odd & even & odd & even\\
	\hline
	\rule{0pt}{11pt} \centering{Form of $(Q_1'Ce)^G$ if $e$ is forward}& $BTA*BT'$ & $ASB*BT'$ & $BSA*BT'$ & $ATB*BT'$\\
	\hline
		\rule{0pt}{11pt} \centering{Form of $(Q_1'Ce)^G$ if $e$ is backward}& $ASB*AT'$ & $BTA*AT'$ & $ATB*AT'$ & $BSA*AT'$\\
	\hline
	\end{tabular}
	\caption{Form of $(Q_1'Ce)^G$ in Case 3.}\label{table3}
\end{table}
Note that the interior of $(Q_1'Ce)^G$ uses exactly one vertex from $S\cup T$ and $(Q_1'Ce)^G$ has no repeated $A$s or $B$s. Write $(Q_1')^G$ for the image of $Q_1'$. We also fix an edge $f^G$ for the image of $f$ which is disjoint from $Q^G$, $L^G$ and $(Q_1'Ce)^G$ and is an $S'B$-edge if $e$ is forward and an $AS'$-edge if $e$ is backward. Let $q_S$ be the number of interior vertices of $Q^G$, $L^G$ and   $(Q_1')^G$ in $S$ and let $q_T$ be the number of interior vertices of $Q^G$, $L^G$ and  $(Q_1')^G$ in $T$. 

Note that $p_S+p_T+q_S+q_T\equiv d_C(e,f)-1 \equiv n \mod 2$.%
\COMMENT{Note that $(Q_1')^G$ has exactly one interior vertex in $S\cup T$. If $Q$ is consistently oriented, $p_T+p_S=d_C(e,f)-1$ and $q_T+q_S$ is even. If $Q$ is antidirected oriented, $p_T+p_S=d_C(e,f)-12$ and $q_T+q_S$ is odd.} 
Using the same reasoning as in Case~2, we find sets $S_A, S_B, T_A, T_B$ satisfying \eqref{eq:condition1} and \eqref{eq:condition2} such that $S''\setminus V(Q^G \cup L^G\cup (Q_1')^G)\subseteq S_A\cup S_B$, $T''\setminus V(Q^G\cup L^G\cup (Q_1')^G)\subseteq T_A\cup T_B$, $s=s_A+s_B+p_S+q_S$, $t=t_A+t_B+p_T+q_T$ and $s_A+t_A+d'=s_B+t_B$.
(R\ref{R2}) and the choice of $s^*$ imply that $s_A, t_A, s_B, t_B \leq 2\sqrt{\eps_1}n$.  
Recall that $v$ denotes the final vertex of $f$. Similarly as in Case~2, we now use Proposition~\ref{prop:sinksource} to find a copy $(vQ_2)^G$ of $(vQ_2)$ which covers $S_A, S_B, T_A, T_B$, contains $L^G$ and has $s_A+t_A$ repeated $A$s and $s_B+t_B$ repeated $B$s.%
\COMMENT{Let $v^G$ be the image of $v$. If $v^G\in A$, let $v':=v$ and $(v')^G:=v^G$. If $v^G\in B$, let $v'$ denote the successor of $v$ on $C$. Map $v'$ to a suitable neighbour $(v')^G$ of $v^G$ in $A$. Then we can apply Proposition~\ref{prop:sinksource}, with $2\sqrt{\eps_1}, \eta_1/2, \tau/2, (v')^G$ playing the roles of $\eps, \eta, \tau, a_1$, to find a copy $(v'Q_2)^G$ of $(v'Q_2)$ which starts at $(v')^G$, covers $S_A, S_B, T_A, T_B$ and contains $L^G$. We obtain a copy $(vQ_2)^G$ of $(vQ_2)$ which has $s_A+t_A$ repeated $A$s and $s_B+t_B$ repeated $B$s.}

We find copies of $P_T$ in $G[T]$ and $P_S$ in $G[S]$ as in Case~1. Together with $(Q_1'Ce)^G$, $Q^G$, $f^G$ and $(vQ_2)^G$, these paths give a copy $P^G$ of $P$ in $G$ such that $|V(P^G)\cap(A\cup B)|\leq 5\eps_2 n$. The path $P^G$ satisfies (EC\ref{EC1}) and we may assume that (EC\ref{EC2}) holds, by extending the path so that both endvertices lie in $A$ if necessary. All repeated $A$s and $B$s in $P^G$ occur as repeated $A$s and $B$s in the paths $P_0^G$ and $(vQ_2)^G$, so we can use \eqref{eqn:repeats} to see that 
$$|B\setminus V(P^G)|-|A\setminus V(P^G)|= d-(s_B-t_B)-(d-d')+(s_A+t_A)+1=1.$$
So (EC\ref{EC3}) is satisfied and $P^G$ is an exceptional cover. 
\end{proof}

\subsection{Finding a copy of $C$}

As we did in the $AB$-extremal case, we will now use an exceptional cover to find a copy of $C$ in $G$.

\begin{proofof}\textbf{Lemma~\ref{lem:ABST}.}
Apply Lemma~\ref{lem:ABST1} or Lemma~\ref{lem:ABST2} to find an exceptional cover $P$ of $G$ which uses at most $2\eta_1^2n$ vertices from $A\cup B$. Let $P'$ be the path of length  $\sqrt{\eps_1}n$ following $P$ on $C$. Extend $P$ by a path isomorphic to $P'$, using this path to cover all $x\in A$ such that $d^+_B(x)\leq b-\eps^{1/3}n$ or $d^-_B(x)\leq b-\eps^{1/3}n$ and all $x\in B$ such that $d^+_A(x)\leq a-\eps^{1/3}n$ or $d^-_A(x)\leq a-\eps^{1/3}n$, using only edges in $E(A,B)\cup E(B,A)$. Let $P^*$ denote the resulting extended path.

We may assume that both endvertices $a_1, a_2$ of $P^*$ are in $A$ and also that $d^\pm_B(a_i) \geq b-\eps^{1/3} n$ (by extending the path if necessary). Let $A^*, B^*$ denote those vertices in $A$ and $B$ which have not already been covered by $P^*$ together with $a_1$ and $a_2$ and let $G^*:=G[A^*, B^*]$. We have that $|A^*|=|B^*|+1$ and
$\delta^0(G^*)\geq a-3\eta_1^2n \geq (7|B^*|+2)/8.$
Then $G^*$ has a Hamilton path of any orientation with the desired endpoints by Proposition~\ref{prop:completepath}(ii). Together with $P^*$, this gives a copy of $C$ in $G$.
\end{proofof}

\section*{Acknowledgements}
We are grateful to the referees for a careful reading of this paper.

\medskip

{\footnotesize \obeylines \parindent=0pt

Louis DeBiasio
Department of Mathematics
Miami University
Oxford
OH 45056
USA
}
\begin{flushleft}
{\it{E-mail address}:
\tt{debiasld@miamioh.edu}}
\end{flushleft}

{\footnotesize \obeylines \parindent=0pt

Daniela K\"{u}hn, Deryk Osthus, Amelia Taylor 
School of Mathematics
University of Birmingham
Edgbaston
Birmingham
B15 2TT
UK
}
\begin{flushleft}
{\it{E-mail addresses}:
\tt{\{d.kuhn, d.osthus\}@bham.ac.uk}, a.m.taylor@pgr.bham.ac.uk}
\end{flushleft}

{\footnotesize \obeylines \parindent=0pt
Theodore Molla
Department of Mathematics
University of Illinois at Urbana-Champaign
Urbana
IL 61801
USA
}
\begin{flushleft}
{\it{E-mail address}:
\tt{molla@illinois.edu}}
\end{flushleft}

\end{document}